\title{Weak separation and plabic graphs}
\author{Suho Oh, Alexander Postnikov, David E.\ Speyer}
\address{Department of Mathematics, University of Michigan, 
         Ann Arbor, MI 48109}
\address{Department of Mathematics, MIT, Cambridge, MA 02139}
\address{Department of Mathematics, University of Michigan, 
         Ann Arbor, MI 48109}
\keywords{Weakly separated collection, Leclerc-Zelevinsky's purity conjecture, 
totally positive Grassmannian, plabic graph, positroid, plabic tiling}  
\thanks{The second author was supported in part by NSF CAREER Award DMS-0504629.}
\thanks{The third author was supported in part by a Clay Research Fellowship.}
\theoremstyle{plain}
\newtheorem{theorem}{Theorem}[section]
\newtheorem{conjecture}[theorem]{Conjecture}
\newtheorem{proposition}[theorem]{Proposition}
\newtheorem{prop}[theorem]{Proposition}
\newtheorem{lemma}[theorem]{Lemma}
\newtheorem{corollary}[theorem]{Corollary}
\theoremstyle{definition}
\newtheorem{definition}[theorem]{Definition}
\theoremstyle{remark}
\newtheorem{remark}[theorem]{Remark}
\newtheorem{example}[theorem]{Example}
\newcommand{\Ch}{\mathcal{H}}
\newcommand{\PCh}{\mathcal{PH}}
\newcommand{\RR}{\mathbb{R}}
\newcommand{\F}{\mathcal{F}}
\newcommand{\newword}[1]{\textbf{\emph{#1}}}
\newcommand{\C}{\mathcal{C}}
\newcommand{\I}{\mathcal{I}}
\newcommand{\B}{\mathcal{B}}
\newcommand{\W}{\mathcal{W}}
\def\f{\mathcal{F}}
\def\M{\mathcal{M}}
\def\G{\mathcal{G}}
\newcommand{\LZprec}{\prec}
\newcommand{\LZpar}{\parallel_{\mathrm{LZ}}}
\newcommand{\pad}[1]{\texttt{pad}(#1)}
\newcommand{\lw}{\sqsubset}
\newcommand{\lweq}{\sqsubseteq}
\begin{document}

\begin{abstract}
Leclerc and Zelevinsky described quasicommuting families of quantum minors in
terms of a certain combinatorial condition, called {\it weak separation.}  They
conjectured that all maximal by inclusion weakly separated collections of
minors have the same cardinality, and that they can be
related to each other by a sequence of mutations.  

On the other hand, Postnikov studied {\it total positivity\/} on the
Grassmannian.  He described a stratification of the totally nonnegative
Grassmannian into {\it positroid strata,} and constructed their
parametrization using {\it plabic graphs.}

In this paper we link the study of weak separation to plabeic graphs.  We extend the notion of weak separation to positroids.
We generalize the conjectures of Leclerc and Zelevinsky, and related ones of Scott,
and prove them.   We show that the maximal weakly
separated collections in a positroid are in bijective correspondence with
the plabic graphs.   This correspondence allows us to use 
the combinatorial techniques of positroids and plabic graphs to prove
the (generalized) purity and mutation connectedness conjectures.
\end{abstract}

\maketitle

\section{Introduction}

%All definitions here are repeated and expanded on in the following sections.

Leclerc and Zelevinsky \cite{LZ}, in their study of quasicommuting families of quantum minors,
introduced the following notion of weakly separated sets.

Let $I$ and $J$ be two subsets of $[n]:=\{ 1,2, \ldots, n \}$.  
Leclerc and Zelevinsky \cite{LZ}, defined $I$ and $J$ to be {\it weakly separated\/} if either
\begin{enumerate}
 \item $|I| \leq |J|$ and $I \setminus J$ can be partitioned 
as $I_1 \sqcup I_2$ such that $I_1 \prec J \setminus I \prec I_2$, or
\item $|J| \leq |I|$ and $J \setminus I$ can be partitioned 
as $J_1 \sqcup J_2$ such that $J_1 \prec I \setminus J \prec J_2$,
\end{enumerate}
where $A\prec B$ indicates that every element of $A$ is less than every element of $B$.

Leclerc and Zelevinsky proved that, for any collection $\C$ of pairwise weakly separated subsets of $[n]$, 
one has $|\C| \leq \binom{n}{2}+n+1$.  Moreover, they made the following {\it Purity Conjecture.}

%\begin{proposition}
%{\rm \cite{LZ}}
%If $\C$ is a collection of subsets of $[n]$, each of which are pairwise weakly separated from each other, then $|\C| \leq \binom{n}{2}+n+1$.
%\end{proposition}

%Leclerc and Zelevinsky made the following {\it Purity Conjecture.}

\begin{conjecture}
\label{conj:LZ}
{\rm \cite{LZ}}
 If $\C$ is a collection of subsets of $[n]$, each of which are pairwise weakly separated from each other, and such that $\C$ is not contained in any larger collection with this property, then $|\C| = \binom{n}{2}+n+1$.
\end{conjecture}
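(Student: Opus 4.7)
The plan is to prove the Purity Conjecture by setting up a bijection between maximal weakly separated collections of subsets of $[n]$ and reduced plabic graphs on a disk with $n$ boundary vertices (of an appropriate type corresponding to the top stratum). The crucial input from Postnikov's theory is that all reduced plabic graphs for a fixed positroid share the same number of faces, and that any two are connected to one another by local \emph{square moves}. Once we associate to each such graph $G$ a maximal weakly separated collection $\F(G)$ of size equal to the number of faces of $G$, purity follows from the known face count, which for the top stratum is $\binom{n}{2}+n+1$.

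The first direction is to attach to each reduced plabic graph $G$ a collection $\F(G) \subset 2^{[n]}$ by labeling every face using the \emph{trips} of $G$ (directed paths obeying the bipartite rules-of-the-road): the label of a face $F$ is the set of boundary indices $i$ such that $F$ lies on a fixed side of the trip starting at $i$. A local analysis of how two trips cross should show that any two face labels are weakly separated. The square move on $G$ translates directly into a \emph{mutation} of $\F(G)$ in which a single subset $Sac$ (for indices $a<b<c<d$) is exchanged for $Sbd$ -- exactly the canonical non-weakly-separated pair -- with all other subsets preserved. Since square moves preserve both the face count and weak separation, every collection of the form $\F(G)$ is weakly separated of the predicted cardinality.

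The second and harder direction -- which I expect to be the main obstacle -- is to show that \emph{every} maximal weakly separated collection $\C$ arises as some $\F(G)$. The natural approach is to reconstruct a plabic tiling directly from $\C$, placing a tile-vertex at each element of $\C$ with adjacencies read off from the weak separation data; equivalently, one establishes mutation-connectedness of all maximal $\C$'s. The technical heart of the argument is to produce, inside an arbitrary maximal $\C$, enough configurations $\{Sab, Sbc, Scd, Sad, Sac\}$ for which a mutation is available. I expect to handle this by induction on a combinatorial distance (for instance, the symmetric difference distance to a fixed reference maximal collection), using the planar rigidity of plabic tilings to guarantee that mutable configurations exist. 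Once both directions are in place, Conjecture~\ref{conj:LZ} follows immediately, and the same argument carried out within a fixed positroid yields the generalized purity statement advertised in the abstract.
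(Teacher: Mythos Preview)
Your overall strategy --- identify maximal weakly separated collections with face labelings of reduced plabic graphs, then invoke the constant face count from Postnikov's theory --- is exactly the paper's approach to Scott's Conjecture~\ref{conj:Scott} and to the general positroid statement. But as written, your proposal does not prove Conjecture~\ref{conj:LZ}, and the gap is not a detail: it is a mismatch of settings.

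Conjecture~\ref{conj:LZ} concerns collections of subsets of $[n]$ of \emph{arbitrary cardinality}, under the Leclerc--Zelevinsky notion of weak separation. The face labels of a reduced plabic graph with $n$ boundary vertices are all $k$-element subsets for some fixed $k$; there is no ``top stratum'' whose face labels range over all of $2^{[n]}$. Your claim that the top stratum has $\binom{n}{2}+n+1$ faces is simply false: the uniform positroid for $Gr(k,n)$ yields $k(n-k)+1$ faces, and no choice of $(k,n)$ makes this equal to $\binom{n}{2}+n+1$. So the bijection you describe, even granting every step, proves Scott's conjecture rather than the LZ conjecture.

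What is missing is the reduction the paper carries out in Section~\ref{sec:Connection_with_LZ}. One defines a padding map $\texttt{pad}: 2^{[n]} \to \binom{[2n]}{n}$ by $I \mapsto I \cup \{2n, 2n-1, \ldots, n+|I|+1\}$, and checks that $I \LZpar J$ if and only if $\texttt{pad}(I) \parallel \texttt{pad}(J)$. The image of $2^{[n]}$ under padding is not all of $\binom{[2n]}{n}$ but rather (after one verifies the relevant lemma) a specific \emph{non-uniform} positroid, namely the one associated to the decorated permutation $\hat{w}_0 = [2n, 2n-1, \ldots, n+1, n, n-1, \ldots, 1]$. A length computation then gives $\ell(\I(\hat{w}_0)) = n + \binom{n}{2}$, which is where the count $\binom{n}{2}+n+1$ actually comes from. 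Only after this translation does your plabic-graph argument apply --- and it must be applied inside this particular positroid, not in the uniform one. Without this step your proposal addresses the wrong conjecture.
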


The above notion of weak separation is related to the study of the Pl\"ucker 
coordinates on the flag manifold, see \cite{LZ}.
Similarly, in the context of Pl\"ucker coordinates on the Grassmannian,
it is natural to study weak separation of $k$ element subsets of $[n]$, for fixed $n$ and $k$. 
Observe that, when $|I|=|J|$, the definition of weak separation becomes 
invariant under cyclic shifts of $[n]$.  Indeed, in this case $I$ and $J$ are weakly 
separated if and only if after an appropriate cyclic shift $(I\setminus J) \prec (J\setminus I)$. 

%Postnikov \cite{Post} and Scott \cite{Scott}
% made similar studies of the Grassmannian. 

Scott \cite{Scott} proved that, for any collection $\C\subset \binom{[n]}{k}$ of pairwise weakly separated 
$k$ element subsets on $[n]$, one has $|\C| \leq k(n-k)+1$.
Here $\binom{[n]}{k}$ denotes the set of $k$ element subsets of $[n]$. 
Moreover, Scott made the following conjecture.

%\begin{proposition}
%\cite{Scott}
%If $\C \subset \binom{[n]}{k}$ is a collection of $k$ element subsets of $[n]$, 
%each of which are pairwise weakly separated from each other, then $|\C| \leq k(n-k)+1$.
%\end{proposition}

\begin{conjecture}
\cite{Scott}
\label{conj:Scott}
If $\C \subset  \binom{[n]}{k}$ 
is a collection of $k$ element subsets of $[n]$, each of which are pairwise weakly separated from each other, and such that $\C$ is not contained in any larger collection with this property, then $|\C| = k(n-k)+1$.
\end{conjecture}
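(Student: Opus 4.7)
The plan is to prove $|\mathcal{C}| = k(n-k)+1$ by setting up a bijection between maximal weakly separated collections in $\binom{[n]}{k}$ and reduced plabic graphs on the disk with $n$ boundary vertices and trip permutation $i \mapsto i+k \pmod n$. Once established, the count reduces to a combinatorial fact about plabic graphs: any reduced graph of this type has exactly $k(n-k)+1$ internal faces, each labelled by a distinct $k$-subset of $[n]$ via Postnikov's trip rule.

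The easier direction goes from graphs to collections. Starting with a plabic graph $G$ of the required type, label its faces by the $k$-subsets given by the trip rule and verify that the resulting collection is pairwise weakly separated. I would check this inductively, using the fact that any two reduced plabic graphs with the same trip permutation are connected by local moves (square moves and contractions/expansions of monochromatic vertices); a square move performs exactly a \emph{mutation} $S\cup\{a,c\} \leftrightarrow S\cup\{b,d\}$ inside a weakly separated quadruple $\{S\cup\{a,b\}, S\cup\{b,c\}, S\cup\{c,d\}, S\cup\{a,d\}\}$, so weak separation propagates. An explicit base graph whose face labels are the $k$-subsets of the form $\{1,\dots,i\}\cup\{j+1,\dots,j+k-i\}$ exhibits a maximal collection of the desired size, showing that the bound is attained.

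The converse direction is the heart of the argument. Given a maximal weakly separated $\mathcal{C}$, I would construct the \emph{plabic tiling} dual to $\mathcal{C}$: place a vertex labelled $I$ in the disk for each $I \in \mathcal{C}$ and connect two vertices whenever their labels differ by a single mutation, filling in faces corresponding to the local quadruple and triple configurations dictated by weak separation. The problem is to show that the resulting tiling covers the entire disk whose boundary labels cycle through the $k$-intervals $\{i+1, \dots, i+k\}$. If any region were uncovered, one would need to produce a new $k$-subset weakly separated from every element of $\mathcal{C}$, contradicting maximality. Dualizing a complete tiling then yields a plabic graph whose face labels are exactly $\mathcal{C}$, and counting faces gives the desired equality.

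The main obstacle is precisely this filling step: ruling out holes in the tiling of an arbitrary maximal $\mathcal{C}$. Potential holes admit many combinatorial shapes, and for each I would need to exhibit a concrete new weakly separated $k$-subset. I expect to handle this by combining the cyclic symmetry of weak separation with a careful local classification of the possible configurations of sets in $\mathcal{C}$ surrounding a boundary arc of a hypothetical hole, perhaps inducting on $n$ by peeling off a boundary index when some element of $[n]$ lies in every or no set of $\mathcal{C}$. As a bonus, once the bijection is in place, the fact that plabic graphs are connected under square moves yields the mutation-connectedness of maximal weakly separated collections.
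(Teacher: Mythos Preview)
Your overall strategy coincides with the paper's: establish a bijection between maximal weakly separated collections and reduced plabic graphs for the permutation $i\mapsto i+k$, via an intermediate plabic tiling, and read off the count from the known face count of reduced plabic graphs. You have also correctly located the crux --- showing that the tiling associated to an arbitrary maximal $\mathcal{C}$ has no holes.

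For the easy direction, your move-connectedness argument differs from the paper's: the paper proves directly, by a strand-crossing analysis (Proposition~\ref{prop:plagws}), that the face labels of any reduced plabic graph are pairwise weakly separated, rather than propagating the property through square moves from a base graph. Your route would work too, but note that you also need distinct faces to carry distinct labels (else the collection is too small); the paper handles this separately in Proposition~\ref{prop:PartConverse}, and you have not addressed it.

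Where your plan has a genuine gap is the filling step. Your two proposed mechanisms --- a local classification of configurations around a boundary arc of a hole, and induction on $n$ by peeling off an index lying in all or none of the sets of $\mathcal{C}$ --- are not what the paper does, and the second will not get off the ground in the generic case (for the uniform matroid no such index need exist). The paper's actual tool is the technical Lemma~\ref{lem:key}: given an edge $(Ex,Ey)$ on the boundary of a putative hole, with $E\neq(x,y)$, one can always find either some $Ea$ with $a\in(x,y)\setminus E$ or some $Exy\setminus b$ with $b\in(y,x)\cap E$ weakly separated from all of $\mathcal{C}$. The proof of this lemma (Section~\ref{sec:key}) is the heart of the paper: it introduces a total order $\sqsubset$ on $\binom{[n]}{k}$ tailored to the pair $(x,y)$, analyzes the ``witnesses'' obstructing each candidate $M^a$ or $N^b$, and uses an inductive ``snake'' construction on chains of overlapping witnesses to derive a contradiction if no candidate works. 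This is a global combinatorial argument on the whole of $\mathcal{C}$, not a local classification near the hole, and it does not proceed by reducing $n$. Until you have a replacement for Lemma~\ref{lem:key}, the hard direction of your plan is incomplete.
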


We will present a stronger statement, which implies both these conjectures.

In \cite{Post}, a stratification of the totally 
nonnegative Grassmannian $Gr^{tnn}(k,n)$, called the {\it positroid stratification,}
was investigated.
%Postnikov \cite{Post} stratified the Grassmannian $G(k,n)$ into finitely many pieces.
The cells of this stratification can be labeled by many different kinds of combinatorial objects.
The three of these objects which we will use are
\begin{enumerate}
 \item Grassmann necklaces, which are certain sequences $(I_1, I_2, \ldots, I_n)$ in $\binom{[n]}{k}$.
\item Positroids, which are certain subsets of $\binom{[n]}{k}$.
\item Decorated permutations, which are permutations of $[n]$ with fixed points colored in two colors.
\end{enumerate}
There is a length function $\ell$ on decorated permutations, related to the length function on the affine symmetric group.

One of our main results is the following.
\begin{theorem}
\label{thm:main_intro}
Let $\I$, $\M$ and $\pi$ be a Grassmann necklace, positroid and decorated
permutation  corresponding to each other.
Let $\C \subseteq \binom{[n]}{k}$ be a collection of pairwise weakly separated
sets such that $\I \subseteq \C \subseteq \M$, and such that $\C$ is not
contained in any larger collection with this property. Then $|\C| =
1+\ell(\pi)$.  
\end{theorem}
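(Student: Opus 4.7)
The plan is to establish a correspondence between reduced plabic graphs with decorated permutation $\pi$ and maximal weakly separated collections $\C$ with $\I\subseteq\C\subseteq\M$, via the face-label map $G\mapsto\C(G)$. The theorem would then follow from the facts that (i) $\C(G)$ is always such a maximal collection of cardinality equal to the number of faces of $G$, (ii) the number of faces of a reduced plabic graph with decorated permutation $\pi$ is $1+\ell(\pi)$, and (iii) every maximal weakly separated collection in the allowed range arises this way.

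First I would recall the labeling of faces of a plabic graph $G$: each face $F$ is labeled by the $k$-subset $I_F$ consisting of boundary vertices $i$ such that the strand starting at $i$ passes with $F$ on a chosen side. Standard results from \cite{Post} give that the boundary face labels reproduce the Grassmann necklace $\I$, that every face label lies in the positroid $\M$, and that the total face count of a reduced $G$ with decorated permutation $\pi$ equals $1+\ell(\pi)$. Next I would verify that $\C(G)$ is weakly separated. Adjacent face labels differ by a single-element swap and are immediately weakly separated; for non-adjacent pairs I would argue via the strand presentation, translating weak separation into a non-alternating chord condition that is visibly satisfied because reduced plabic strands have no bad crossings.

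Maximality of $\C(G)$ should follow from a region argument: any $J\in\binom{[n]}{k}\cap\M$ weakly separated from every face label forces its associated chord diagram to sit inside some face of $G$, and hence $J$ must itself be a face label. The hardest step is the converse direction---that every maximal weakly separated collection $\C$ with $\I\subseteq\C\subseteq\M$ has the form $\C(G)$ for some reduced plabic graph $G$. I would attack this by induction on $\ell(\pi)$: locate an element $I\in\C\setminus\I$ that behaves like the label of a ``boundary square'' of a putative plabic graph, mutate it out of $\C$ (replacing $I$ by the unique set compatible with the remaining collection), thereby passing to a positroid with a shorter decorated permutation, and apply the inductive hypothesis.

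The main obstacle is precisely this inductive reduction: proving that a maximal weakly separated collection in $[\I,\M]$ always contains a ``removable'' element whose mutation matches the structure of some reduced plabic graph. Once this is in place, the equality $|\C|=1+\ell(\pi)$ is immediate, and Conjectures \ref{conj:LZ} and \ref{conj:Scott} follow by specializing $(\I,\M,\pi)$ to the top stratum---the flag case giving $\binom{n}{2}+n+1$, and the full Grassmannian case giving $k(n-k)+1$.
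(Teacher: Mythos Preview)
Your overall architecture matches the paper: prove that the face-label map $G\mapsto\f(G)$ gives a bijection between reduced plabic graphs for $\pi^{:}$ and maximal weakly separated collections $\C$ with $\I\subseteq\C\subseteq\M$, and then read off $|\C|=1+\ell(\pi)$ from the face count of a reduced plabic graph. The forward direction (that $\f(G)$ is a weakly separated collection contained in $\M$ and containing $\I$) is argued in the paper essentially as you sketch, via strand arguments and induction on $\ell(\pi)$ using removable edges. Maximality of $\f(G)$ is also proved in the paper by a region/embedding argument close to what you describe.

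The substantive divergence is in the converse, and here your proposal has a genuine gap. You suggest an induction on $\ell(\pi)$: find an element of $\C\setminus\I$ that plays the role of a ``removable'' face, mutate/remove it, and descend to a shorter decorated permutation. The difficulty is that the operation on plabic graphs that lowers $\ell$---deleting a removable edge, thereby uncrossing two strands $i,j$---does \emph{not} correspond to deleting one element from $\f(G)$. As the paper's proof of the analogue of Proposition~\ref{prop:PartConverse} shows, uncrossing $i$ and $j$ swaps $i\leftrightarrow j$ in \emph{every} face label that contains exactly one of them; many labels change simultaneously, and the resulting collection is not $\C\setminus\{I\}$ for any single $I$. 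Conversely, a square move (the operation you call ``mutation'') replaces $Sac$ by $Sbd$ but keeps the same $\pi^{:}$ and the same $\ell$, so it cannot drive an induction on length. Thus neither of the two natural moves realizes the inductive step you describe, and there is no evident combinatorial substitute that, starting from an arbitrary maximal $\C$, produces a maximal weakly separated collection for a strictly shorter permutation.

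The paper circumvents this entirely with a direct construction rather than induction. To an arbitrary weakly separated $\C$ it associates a two-dimensional CW-complex $\Sigma(\C)$ (the \emph{plabic tiling}), embeds it in $\RR^2$ via $I\mapsto\sum_{i\in I}v_i$ for fixed convex-position points $v_i$, and proves that when $\C$ is maximal in $\M$ the image fills the region bounded by the necklace polygon $\pi(\I)$. The crucial technical step is a local lemma (Lemma~\ref{lem:key}): given an edge $(Ex,Ey)$ of $\Sigma(\C)$ with $E\neq(x,y)$, some set of the form $E\cup\{a\}$ or $E\cup\{x,y\}\setminus\{b\}$ is weakly separated from all of $\C$. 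This is what rules out holes in the tiling and hence forces $\Sigma(\C)$ to be a disc; the dual graph is then the desired reduced plabic $G$ with $\f(G)=\C$. If you want to salvage an inductive proof, the honest obstacle is to formulate and prove, purely in terms of $\C$, an analogue of ``there exists a removable edge'' without already having a graph---and that appears to be at least as hard as the tiling argument.
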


We will call collections $\C$ satisfying the conditions of this theorem
{\it maximal weakly separated collections inside $\M$.}

This result implies Conjectures~\ref{conj:LZ} and~\ref{conj:Scott}, and also
the $w$-chamber conjecture of Leclerc and Zelevinsky \cite{LZ}.

From our perspective, Scott's Conjecture~\ref{conj:Scott} is especially
natural, because it refers to the largest cell in $G^{tnn}(k,n)$.  
The reader may find it useful to first consider the proofs of the main results in that special case.

We also prove the following result on mutation-connectedness, whose special cases were
conjectured in \cite{LZ} and \cite{Scott}.

\begin{theorem}
\label{thm:mutations_intro}  
Fix a positroid $\M$.
Any two maximal weakly separated collections inside $\M$ can be obtained from each other 
by a sequence of mutations of the following form:
$\C\mapsto (\C \setminus \{ S a c \}) \cup \{ S b d \}$,
assuming that, for some cyclically ordered elements $a, b, c, d$ in $[n] \setminus S$,
$\C$ contains $S a b$, $S b c $, $S c d$, $S d a$ and $S a c$.
(Here $S a b$ is a shorthand for $S\cup\{a,b\}$, etc.)
\end{theorem}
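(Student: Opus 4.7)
The plan is to reduce this to a statement about plabic graphs via the bijection announced in the abstract: maximal weakly separated collections inside $\M$ correspond to reduced plabic graphs whose trip (decorated) permutation is $\pi$, with the correspondence sending a graph $G$ to its collection $\FF(G)$ of face labels. Granting this bijection as the central technical result of the paper, every maximal weakly separated collection inside $\M$ arises as $\FF(G)$ for some such plabic graph $G$, so it suffices to show that for any two reduced plabic graphs $G$ and $G'$ with permutation $\pi$, the collections $\FF(G)$ and $\FF(G')$ are related by the stated mutations.

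First I would invoke Postnikov's theorem from \cite{Post} that any two reduced plabic graphs with the same decorated permutation differ by a sequence of three local moves: the square move, the contraction/uncontraction of an edge between like-colored vertices, and the insertion/removal of a bivalent vertex. The last two moves affect neither the set of faces nor the trips that cross them, so they leave $\FF(G)$ unchanged. Hence only square moves can alter the collection of face labels, and the mutation-connectedness of collections is implied by the move-connectedness of graphs as soon as one identifies a square move with a mutation.

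Second, I would perform a local analysis of the face labels around a square face using Postnikov's ``trip rule'' description of how each face label is determined by the trips crossing it. A square face in a reduced plabic graph has four alternating-color boundary vertices; tracing the four trips entering and the four trips exiting this region shows that the four neighboring faces carry labels $Sab$, $Sbc$, $Scd$, $Sda$ for some common $(k-2)$-element core $S \subset [n]$ and elements $a,b,c,d \in [n] \setminus S$ appearing in this cyclic order along the boundary, while the interior face carries $Sac$. After a square move, the interior label becomes $Sbd$ and the four outer labels are unchanged, which is exactly the mutation in the theorem.

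The main obstacle is the local analysis around the square: one must verify that the cyclic order of $a,b,c,d$ in $[n]\setminus S$ agrees with the cyclic order of the four trips around the square face, and that the interior label flips precisely from $Sac$ to $Sbd$ rather than from, say, $Sbd$ to $Sac$. This demands careful bookkeeping with Postnikov's conventions for the trip permutation and face labeling, but is a finite local computation involving only the six relevant trips through the square face. Once this lemma is in hand, the sequence of square moves between $G$ and $G'$ from \cite{Post} translates term-by-term into the desired sequence of mutations between $\FF(G)$ and $\FF(G')$, proving the theorem.
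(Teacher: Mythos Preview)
Your proposal is correct and follows essentially the same route as the paper. The paper also deduces Theorem~\ref{thm:mutations_intro} from the bijection of Theorem~\ref{th:intro_correspondence} together with Postnikov's move-connectedness Theorem~\ref{thm:P}, observing that moves (M2) and (M3) leave $\f(G)$ unchanged while the square move (M1) effects exactly the stated mutation (see Figure~\ref{M1}); your worry about the direction $Sac \to Sbd$ versus $Sbd \to Sac$ is unnecessary, since the mutation in the statement is manifestly symmetric under swapping $(a,c)$ with $(b,d)$.
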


Our main tool is the technology of plabic graphs, developed in \cite{Post}. 
Plabic graphs are variants of wiring diagrams of decompositions 
in the symmetric group, which deserve to be better understood.
As with wiring diagrams, there is a notion of a reduced plabic graph, 
and the faces of a reduced plabic graph are labeled with subsets of $\binom{[n]}{k}$.  

Actually, we show that weakly separated collections and reduced plabic graphs
are in a bijective correspondence.

\begin{theorem}
\label{th:intro_correspondence}
Fix a positroid $\M$ and the corresponding Grassmann necklace $\I$.
For a reduced plabic graph $G$ associated with $\M$, let $\f(G) \subset \binom{[n]}{k}$ 
be the collection of labels of faces of $G$.  Then the map $G\mapsto \f(G)$ is a bijection
between reduced plabic graphs for the positroid $\M$ and maximal
weakly separated collections $\C$ such that $\I \subseteq \C \subseteq \M$.
\end{theorem}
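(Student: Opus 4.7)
The plan is to verify that $G \mapsto \f(G)$ lands in the set of maximal weakly separated collections containing $\I$ and contained in $\M$, and then to establish that it is a bijection. The forward map has three requirements: (i) $\I \subseteq \f(G) \subseteq \M$, (ii) $\f(G)$ is pairwise weakly separated, and (iii) $\f(G)$ is maximal. Part (i) is essentially a standard fact from \cite{Post}: the boundary faces of a reduced plabic graph for $\M$ recover the Grassmann necklace $\I$, and every interior face label is a nonzero Pl\"ucker coordinate on the positroid cell, hence lies in $\M$. Part (iii) will follow from (ii) together with the face count $|\f(G)| = 1 + \ell(\pi)$ (standard for reduced plabic graphs) and an independent upper bound on the sizes of weakly separated collections between $\I$ and $\M$.

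The step I expect to be the \textbf{main obstacle} is (ii). My approach would go via the strand (or ``trip'') description of $G$: each face $F$ is labeled by the set of strands that pass $F$ on a fixed side, so for two faces $F, F'$ the symmetric difference $I_F \triangle I_{F'}$ is exactly the set of strand starting-points whose trajectories separate $F$ from $F'$. For faces sharing an edge the labels differ in a single element and weak separation is automatic. For arbitrary pairs, the key geometric input from \cite{Post} is that in a reduced plabic graph no strand crosses itself and no pair of strands crosses twice in the same orientation. Translating this to the boundary circle, the endpoints of the strands separating $F$ from $F'$ partition the boundary into arcs so that the elements of $I_F \setminus I_{F'}$ and $I_{F'} \setminus I_F$ occupy non-interleaving cyclic blocks. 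This is precisely the cyclic form of the weak separation condition, and should yield the condition uniformly over all pairs of faces.

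For the reverse direction, given a maximal weakly separated collection $\C$ with $\I \subseteq \C \subseteq \M$, I would reconstruct a reduced plabic graph $G$ directly from $\C$. The natural combinatorial model is to form the graph $D(\C)$ whose vertex set is $\C$ and whose edges join pairs $I, I'$ with $|I \triangle I'| = 2$, and then realize a planar dual of $D(\C)$, suitably bicolored according to the parity of the exchange, as a plabic graph embedded in the disk with cyclic boundary data prescribed by $\I$. The weak separation hypothesis on $\C$ should ensure that $D(\C)$ embeds in the disk as a quadrangulation, while the inclusion $\C \subseteq \M$ ensures the resulting trip permutation matches the decorated permutation of $\M$. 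Injectivity of $G \mapsto \f(G)$ follows from the same reconstruction: if $\f(G) = \f(G')$, both graphs are recovered as the dual of the single $D(\f(G))$, so $G = G'$.
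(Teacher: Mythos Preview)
Your outline for the forward map is broadly aligned with the paper for parts (i) and (ii): boundary faces recover $\I$, face labels lie in $\M$, and weak separation of face labels is proved by a strand-crossing argument (the paper's Proposition~\ref{prop:plagws}). However, your argument for (iii) has a gap. You appeal to an ``independent upper bound'' $|\C|\le 1+\ell(\pi)$ for weakly separated collections $\I\subseteq\C\subseteq\M$, but no such bound is available in advance for a general positroid; establishing it is essentially equivalent to the purity statement you are trying to prove. The paper does \emph{not} proceed this way: it proves maximality of $\f(G)$ directly (Theorem~\ref{thrm:IsMaximal}) by showing that the plabic tiling $\Sigma(\f(G))$ fills the entire region bounded by $\pi(\I)$, so that any $J\parallel\f(G)$ with $J\in\M$ would have $\pi(J)$ landing on an existing face, forcing $J\in\f(G)$.

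The more serious gap is in your reverse direction. Forming an adjacency complex on $\C$ and dualizing is indeed the paper's strategy (their $\Sigma(\C)$; note its $2$-cells are arbitrary polygons indexed by cliques, not quadrilaterals). Weak separation does guarantee a planar embedding (Proposition~\ref{prop:embed}), but this is the easy half. The substantive difficulty, which your sentence ``weak separation \ldots should ensure that $D(\C)$ embeds in the disk as a quadrangulation'' elides entirely, is showing that for a \emph{maximal} $\C$ the complex has no holes, i.e.\ covers the full interior of $\pi(\I)$. This is Proposition~\ref{NoHoles}, and its proof rests on the technical Lemma~\ref{lem:key} (all of Section~\ref{sec:key}): given a boundary edge $(Ex,Ey)$ of a putative hole, one must produce a new set $Ea$ or $Exy\setminus b$ weakly separated from all of $\C$, contradicting maximality. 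That lemma is the heart of the argument and requires a careful analysis of ``witnesses'' and a delicate total order on $\binom{[n]}{k}$; nothing in your proposal addresses it. Finally, note that the bijection is really with reduced plabic graphs up to the moves (M2)--(M3) (equivalently, with bipartite reduced graphs), since those moves do not change $\f(G)$; your injectivity claim should be stated with this in mind.
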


We also establish the following consequence for the theory of cluster algebras, in Section~\ref{sec:Clust}.
\begin{theorem} 
Let $\C$ be a subset of $\binom{[n]}{k}$. The following are equivalent:
\begin{enumerate}
\item The set of Pl\"ucker coordinates $\{ p_I \}_{I \in \C}$ is a cluster in the cluster algebra structure on the coordinate ring of $G(k,n)$.
\item $\C$ is a maximal weakly separated collection.
\item $\C$ is the collection of face labels of a reduced plabic graph for the uniform matroid $\binom{[n]}{k}$.
\end{enumerate}
\end{theorem}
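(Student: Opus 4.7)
The plan is to deduce this three-way equivalence by combining Theorem \ref{th:intro_correspondence} with Scott's prior work \cite{Scott} on the cluster algebra structure of the Grassmannian and with the Leclerc-Zelevinsky \cite{LZ} criterion for quasi-commutation of quantum Pl\"ucker coordinates.

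First, the equivalence (2) $\Longleftrightarrow$ (3) is a direct specialization of Theorem \ref{th:intro_correspondence} to the uniform matroid $\M = \binom{[n]}{k}$. In this case the associated Grassmann necklace $\I$ consists of the $n$ cyclic ``intervals'' $\{i, i+1, \ldots, i+k-1\}$ (indices read modulo $n$). A short check shows that these intervals already lie in every maximal weakly separated collection in $\binom{[n]}{k}$, so the hypothesis $\I \subseteq \C \subseteq \M$ becomes vacuous, and Theorem \ref{th:intro_correspondence} immediately supplies the desired bijection.

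Next, for (3) $\Longrightarrow$ (1), I would invoke Scott's theorem \cite{Scott}: the face labels of a reduced alternating strand diagram (equivalently, of a reduced plabic graph for the top positroid cell) form an extended cluster of Pl\"ucker coordinates in the cluster algebra on the homogeneous coordinate ring of $G(k,n)$, with square moves of plabic graphs corresponding exactly to those cluster mutations that send one Pl\"ucker variable to another.

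The remaining implication (1) $\Longrightarrow$ (2) is the main obstacle, and I would establish it by passing to the quantum Grassmannian. Any two cluster variables lying in a common cluster of the quantum cluster algebra must quasi-commute, and Leclerc-Zelevinsky showed that two quantum Pl\"ucker coordinates quasi-commute precisely when their index sets are weakly separated. Hence if $\{p_I\}_{I \in \C}$ is a cluster, then $\C$ is pairwise weakly separated. Maximality is then automatic, since every cluster in the Grassmannian cluster algebra has cardinality $k(n-k)+1$, matching the size of a maximal weakly separated collection as supplied by Theorem \ref{thm:main_intro}.
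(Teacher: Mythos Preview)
Your argument is correct and follows essentially the same route as the paper: $(2)\Leftrightarrow(3)$ via the main correspondence theorem specialized to the uniform matroid, $(3)\Rightarrow(1)$ via Scott's construction of the cluster structure, and $(1)\Rightarrow(2)$ via quasi-commutation in the quantum Grassmannian together with the cardinality count. The paper additionally invokes \cite{GLS} to identify the Berenstein--Zelevinsky quantum cluster algebra $\CC_q(G(k,n))$ with the classical quantum coordinate ring in which quantum minors live (so that ``cluster variables in a common cluster quasi-commute'' actually translates into ``the corresponding quantum minors quasi-commute''), attributes the quasi-commutation $\Rightarrow$ weak separation criterion for Grassmannian Pl\"ucker coordinates to \cite{Scott} rather than \cite{LZ}, and cites \cite{Scott2} (not \cite{Scott}) for the cluster structure on $G(k,n)$.
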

Readers not interested in cluster algebras may skip this section.

%We show (see REFERENCE for precise statements) that maximal weakly separated collections correspond
%are the labels of the regions of reduced alternating strand diagrams.  

Theorems~\ref{thm:main_intro} and~\ref{thm:mutations_intro}  
follow from the correspondence in 
Theorem~\ref{th:intro_correspondence} and the properties of plabic graphs from \cite{Post}.

%This allows us to use techniques from~\cite{Post}.  

Our main new tool is a construction we term ``plabic tilings". Maximal plabic tilings are dual to plabic graphs, and non-maximal
plabic tilings correspond to non-maximal weakly separated collections.

\subsection{Parallel work}

As we were completing this work, we learned that Danilov, Karzanov and Koshevoy succeeded
in proving Leclerc and Zelevinsky's conjectures. Like our proof, their proof
relies on certain planar diagrams. Their diagrams, which are called generalized
tilings, are closely related to the plabic graphs which play a central role in
this paper, see \cite{DKK}. We see several differences between our
work and theirs. We study weakly separated collections of all positroids, which is
more general than considering weakly separated collections of $w$-chamber
sets, see Section \ref{sec:Connection_with_LZ}.
Moreover, our constructions respect
the dihedral symmetry inherent in the definitions of positroids and weak
separation, which Danilov, Karzanov and Koshevoy's constructions do not. Finally, our
bicolored surfaces give a natural object to assign to a weakly separated
collection which is not maximal. 
%As we point out in example REFERENCE, this is
%extremely helpful for hand computations, and we hope it will be of theoretical
%use as well.

\subsection{Notion of weak separation}

From our perspective, it is most natural to work with weak separation as defined by Scott \cite{Scott}, which is motivated by geometric considerations in the case of Grassmannians. 
A similar notion was introduced by Leclerc and Zelevinsky, \cite{LZ},  motivated by considerations in the study of flag manifolds. 
We will refer to Leclerc and Zelevinsky's definition as \newword{LZ weak separation}.
As we explain in section~\ref{sec:Connection_with_LZ}, LZ weak separation is actually a special case of weak separation.
All of the statements in section~\ref{sec:Connection_with_LZ} (although not all their proofs) can be understood as soon as the reader has read the definitional material in sections~\ref{sec:WS} and~\ref{sec:Grneck}.

\subsection{Acknowledgments}

The third author is engaged in an as yet unpublished project with
Andre Henriques and Dylan Thurston, which has some overlap with this work.  In
particular, Proposition~\ref{prop:embed} was originally proved in that
collaboration; the third author is grateful to his collaborators for letting it
appear here. 

%The second author was supported in part by NSF CAREER Award DMS-0504629; the
%third author was supported in part by a Clay Research Fellowship.

% WHAT ELSE?

\section{Notation}

Throughout the paper we use the following notation.
Let $[n]$ denote $\{ 1,\ldots, n \}$, and $\binom{[n]}{k}$ denote the set of $k$ element subsets of $[n]$.
We will generally consider $[n]$ as cyclically ordered.  
We write $<_{i}$ for the cyclically shifted linear order  on $[n]$:
$$i <_{i} i+1 <_{i} i+2 <_{i} \cdots <_{i} n <_{i} 1 <_{i} \cdots <_{i} i-1.$$ 
We will say that
$i_1$, $i_2$, \ldots, $i_r$ in $[n]$ are {\it cyclically ordered\/} 
if $i_1 <_i i_{2} <_i \cdots <_i i_r$ for some $i\in[n]$.

We write $(a, b)$ for the open cyclic interval from $a$ to $b$. In other words,
the set of $i$ such that $a$, $i$, $b$ is cyclically ordered.  We write $[a,
b]$ for the closed cyclic interval, $[a, b] = (a, b) \cup \{ a, b \}$, and use
similar notations for half open intervals.

If $S$ is a subset of $[n]$ and $a$ an
element of $[n]$, we may abbreviate $S \cup \{ a \}$ and $S \setminus \{ a \}$
by $S a$ and $S \setminus a$. 

In this paper, we need to deal with three levels of objects: elements of $[n]$,
subsets of $[n]$, and collections of subsets of $[n]$. For clarity, we will
denote these by lower case letters, capital letters, and calligraphic letters,
respectively. 

The use of the notation $I \setminus J$ does not imply $I \subseteq J$.

\section{Weakly separated collections}
\label{sec:WS}

In this section, we define weak separation for collections of $k$ element subsets and
discuss the $k$ subset analogue of Leclerc and Zelevinsky's conjectures.  The
relation of this approach to the original definitions and conjectures from
\cite{LZ} will be discussed in section~\ref{sec:Connection_with_LZ}.

Let us fix two nonnegative integers $k \leq n$.
%Let $[n]:= \{ 1,\ldots, n \}$ and let $\binom{[n]}{k}$ denotes the set of $k$ element subsets of $[n]$.

%We will generally consider $[n]$ as cyclically ordered.  We will say that
%$i_1$, $i_2$, \ldots, $i_r$ in $[n]$ are {\it cyclically ordered\/} 
%if $i_s < i_{s+1} < \cdots < i_r < i_1 < i_2 < \cdots < i_{s-1}$ for some $s\in[r]$. 
%
%In this paper, we need to deal with three levels of objects: elements of $[n]$,
%subsets of $[n]$, and collections of subsets of $[n]$. For clarity, we will
%denote these by lower case letters, capital letters, and calligraphic letters,
%respectively. 

\begin{definition}
%Let $[n]:=\{ 1,2, \ldots, n\}$, and let $\binom{[n]}{k}$ be the set of $k$ element subsets of $[n]$.
For two $k$ element subsets $I$ and $J$ of $[n]$, we say that 
%Let $I$ and $J$ be two $k$ element subsets of $[n]$. 
%Let $A = I \setminus J$ and $B = J \setminus I$. 
$I$ and $J$ are {\it weakly separated\/}
%Define $I$ and $J$ to be \newword{weakly separated} 
if there do \emph{not} exist $a$, $b$, $a'$, $b'$, cyclically ordered, with $a, \, a' \in I\setminus J$ and 
$b,\, b' \in J\setminus I$. 

Geometrically, $I$ and $J$ are weakly separated if and only if there exists a chord separating
the sets $I\setminus J$ and $J\setminus I$ drawn on a circle.

%, then $I$ and $J$ are weakly separated if we can draw a chord separating $A$ from $B$. 

We write $I \parallel J$ to indicate that $I$ and $J$ are weakly separated.

We call a subset $\C$ of $\binom{[n]}{k}$ a {\it collection.} 
We define a {\it weakly separated collection\/} to be a collection 
$\C\subset \binom{[n]}{k}$ such that, for any $I$ and $J$ in $\C$, 
the sets $I$ and $J$ are weakly separated. 

We define a {\it maximal weakly separated collection} to be a weakly separated collection which is not contained 
in any other weakly separated collection.  
\end{definition}

Following Leclerc and Zelevinsky \cite{LZ}, Scott observed the following claim.

\begin{proposition}
\cite{Scott}, cf.\ \cite{LZ}
\label{mutation}
Let $S \in \binom{[n]}{k-2}$ and let $a$, $b$, $c$, $d$ be cyclically ordered elements of $[n] \setminus S$. 
Suppose that a maximal weakly separated collection $\C$ contains $S a b$, $S b c$, $S c d$, $S d a$ and $S a c$. 
Then $\C' := (\C \setminus \{ S a c \}) \cup \{ S b d \}$ is also a maximal weakly separated collection.
\end{proposition}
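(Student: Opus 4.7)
The proposition has two parts: that $\C'$ is weakly separated, and that $\C'$ is maximal. My plan is to reduce both parts to a single \emph{Key Lemma}: if $T \in \binom{[n]}{k}$ with $T \neq S a c$ is weakly separated from each of $S a b$, $S b c$, $S c d$, $S d a$, and from $S a c$, then $T \parallel S b d$. The reflection symmetry $a \leftrightarrow b$, $c \leftrightarrow d$ interchanges $S a c \leftrightarrow S b d$ while preserving the four ``side'' sets, so it also gives the analogous statement with $S a c$ and $S b d$ swapped.

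Assuming the Key Lemma, weak separation of $\C'$ is immediate, since every $T \in \C \setminus \{S a c\}$ satisfies its hypothesis. For maximality, suppose for contradiction some $T \notin \C'$ were weakly separated from every element of $\C'$. Since $S b d \in \C'$, we get $T \parallel S b d$ and $T \neq S b d$; since $S a c \not\parallel S b d$, also $T \neq S a c$. The symmetric Key Lemma then gives $T \parallel S a c$, so $T$ is weakly separated from every element of $\C$, and $T \notin \C$ (otherwise $T \in \C \setminus \{S a c\} \subseteq \C'$). Hence $\C \cup \{T\}$ is a strictly larger weakly separated collection, contradicting maximality of $\C$.

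For the Key Lemma, let $X = S \setminus T$, $Y = T \setminus (S \cup \{a,b,c,d\})$, $Z = T \cap \{a,b,c,d\}$; one computes $T \setminus S_{xy} = Y \cup (Z \setminus \{x,y\})$ and $S_{xy} \setminus T = X \cup (\{x,y\} \setminus Z)$ for every pair $xy$, together with $|X| = |Y| + |Z| - 2$. Assume for contradiction $T \not\parallel S b d$, witnessed by a cyclically ordered quadruple $\alpha, \beta, \alpha', \beta'$ with $\alpha, \alpha' \in Y \cup (Z \cap \{a,c\})$ and $\beta, \beta' \in X \cup (\{b,d\} \setminus Z)$. I would then case-split on which of these four ``pools'' each element belongs to. In the easy subcases, the same quadruple (possibly with one of $a, b, c, d$ substituted into a slot) directly witnesses non-separation of $T$ from $S a c$ or from a neighbor set $S a b$, $S b c$, $S c d$, $S d a$: for instance, $\alpha, \alpha' \in Y$ paired with $\beta, \beta' \in X$ gives an interlock for $S a c$, while one $\beta \in \{b, d\}$ paired with the other in $X$ yields an interlock for the appropriate neighbor.

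The main obstacle is the ``doubly diagonal'' case $\{\alpha, \alpha'\} = \{a, c\} \subseteq Z$ and $\{\beta, \beta'\} = \{b, d\}$ with $b, d \notin Z$, where the witnessing interlock is exactly the cyclic quadruple $a, b, c, d$ and no substitution gives an interlock for any other $S_{xy}$. To handle it, I invoke the hypothesis $T \parallel S a c$: under this configuration $T \setminus S a c = Y$ and $S a c \setminus T = X$, so $Y$ and $X$ must be arc-separable on the circle; combined with $|X| = |Y|$ (from $|Z| = 2$), I split into sub-configurations according to which of the open arcs $\gamma_{a b}, \gamma_{b c}, \gamma_{c d}, \gamma_{d a}$ cut out by $a, b, c, d$ contain the $Y$-block and the $X$-block. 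In each such sub-configuration, an explicit interlocking quadruple can be produced for one of the four neighbor sets using one element of $X$, one element of $Y$, and one or two of $\{a, b, c, d\}$. The remaining intermediate subcases (where the $\alpha$-pool or $\beta$-pool is mixed) are handled by combining the two techniques above. The sole configuration in which no interlock appears is $X = Y = \emptyset$, which forces $T = S \cup \{a, c\} = S a c$ and is excluded by hypothesis.
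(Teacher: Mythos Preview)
The paper does not actually supply a proof of this proposition: it is stated with attribution to \cite{Scott} (cf.\ \cite{LZ}) and then used as a black box. So there is no ``paper's proof'' to compare against; your argument stands or falls on its own.

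Your reduction to the Key Lemma is clean and correct, including the use of the orientation-reversing symmetry $(a,b,c,d)\mapsto(d,c,b,a)$ to get the dual statement (your phrasing ``$a\leftrightarrow b$, $c\leftrightarrow d$'' is slightly loose, since that relabeling does not preserve the cyclic order on $[n]$; what you actually use is that weak separation is invariant under reversal of the cyclic order, which gives exactly the dual Key Lemma you need). The deduction of both weak separation and maximality of $\C'$ from the two lemmas is fine.

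The Key Lemma itself is true, and your outline captures the right structure, but as written it is a plan rather than a proof. Two points where more care is required: (i) In the ``doubly diagonal'' case you propose to split according to which single open arc among $(a,b),(b,c),(c,d),(d,a)$ contains the $Y$-block and which contains the $X$-block; but chord-separability of $Y$ from $X$ does not force either set to lie in a single such arc --- $Y$ could, for instance, have elements in both $(a,b)$ and $(b,c)$. The argument still goes through if you instead choose a single $y\in Y$ and $x\in X$ and case on the pair of arcs containing them (one checks, as you can verify directly, that for every placement of $(y,x)$ at least one of the four neighbour constraints produces an interlock with the quadruple built from $y,x$ and one or two of $a,b,c,d$), but you should say this. (ii) The ``intermediate subcases (mixed pools) are handled by combining the two techniques above'' is too vague to count as a proof; each mixed configuration needs its explicit witness written down. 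None of this is deep --- it is an honest finite case check --- but it has to be on the page.
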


We define $\C$ and $\C'$ to be {\it mutations} of each other if they are linked as in Proposition~\ref{mutation}.
We will prove the following claim, conjectured by Scott.

\begin{theorem} \label{SConjecture}
Every maximal weakly separated collection of $\binom{[n]}{k}$ has cardinality $k(n-k)+1$. 
Any two maximal weakly separated collections are linked by a sequence of mutations.
\end{theorem}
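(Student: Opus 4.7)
The plan is to deduce Theorem~\ref{SConjecture} as a special case of Theorems~\ref{thm:main_intro} and~\ref{thm:mutations_intro} by specializing to the uniform matroid $\M = \binom{[n]}{k}$. First I identify the combinatorial data. The uniform matroid is a positroid; its Grassmann necklace is the sequence of cyclic intervals $I_j = \{j, j+1, \ldots, j+k-1\} \pmod n$, and its decorated permutation is the shift $\pi\colon i \mapsto i+k \pmod n$ (with no fixed points when $0 < k < n$). Using the definition of the length function $\ell$ from \cite{Post}, a direct count shows $\ell(\pi) = k(n-k)$; geometrically this is the number of crossings of the wiring of $\pi$ drawn inside a disk, or equivalently the dimension of the top positroid cell $Gr^{tnn}(k,n)$.

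Second, I argue that every maximal weakly separated collection $\C \subseteq \binom{[n]}{k}$ automatically satisfies $\I \subseteq \C \subseteq \M$. The containment $\C \subseteq \M$ is vacuous since $\M = \binom{[n]}{k}$. For $\I \subseteq \C$, it suffices to observe that each cyclic interval $I_j$ is weakly separated from every $k$-element subset $J$: the set $I_j \setminus J$ sits inside the arc $[j, j+k-1]$ while $J \setminus I_j$ sits inside the complementary arc, so one can separate them by a chord of the circle. By the geometric characterization in the definition, $I_j \parallel J$ for all $J$, so by maximality $I_j \in \C$.

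Once these two facts are in hand, Theorem~\ref{thm:main_intro} gives $|\C| = 1 + \ell(\pi) = k(n-k) + 1$, proving the cardinality half of the statement; and Theorem~\ref{thm:mutations_intro} gives mutation connectedness of the maximal weakly separated collections in $\M$, which is exactly the second half.

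The main obstacle is really not in this specialization step but in the main theorems themselves, which rest on the bijection of Theorem~\ref{th:intro_correspondence} between maximal weakly separated collections (inside a given positroid) and reduced plabic graphs (for the corresponding decorated permutation). At the level of this particular corollary, the only nontrivial checks are the verification that the shift permutation has length $k(n-k)$ and the chord-separation argument showing that cyclic intervals are universally weakly separated; both are short. The substantive work in the rest of the paper will be to build the plabic-graph/plabic-tiling correspondence needed to justify invoking Theorems~\ref{thm:main_intro} and~\ref{thm:mutations_intro}.
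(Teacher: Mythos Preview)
Your proposal is correct and matches the paper's approach: the paper states immediately after Theorem~\ref{thm:GSConjecture} that taking $I_i = \{i,i+1,\dots,i+k-1\}$ gives $\M_{\I} = \binom{[n]}{k}$ and reduces to Theorem~\ref{SConjecture}. You have simply filled in the two details the paper leaves implicit---that the cyclic intervals are weakly separated from every $k$-set (hence lie in any maximal $\C$) and that $\ell$ of the shift permutation equals $k(n-k)$---both of which are straightforward.
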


\section{Weakly separated collections in positroids}
\label{sec:Grneck}

While our results are purely combinatorial, they are motivated by constructions in algebraic geometry.
Specifically, \cite{Post} introduced the positroid stratification of the Grassmannian (see also \cite{KLS}). 
We will prove a version of Theorem~\ref{SConjecture} for every cell in this stratification. 
Theorem~\ref{SConjecture} itself will correspond to the case of the largest
cell.  

There are several combinatorial objects which can be used to index the cells of this stratification. 
We will use three of these  -- {\it Grassmann necklaces, decorated permutations,} 
and {\it positroids}.  See \cite{Post} for more details. 

%Recall that $<_{i}$ is the cyclically shifted linear order on $[n]$:
%$i <_{i} i+1 <_{i} i+2 <_{i} \cdots <_{i} n <_{i} 1 <_{i} \cdots <_{i} i-1.$ 

\begin{definition} \cite[Definition~16.1]{Post}
A \textit{Grassmann necklace} is a sequence $\I = (I_1, \cdots, I_n)$ of $k$ element subsets of $[n]$ such that, for $i \in [n]$, the set $I_{i+1}$ contains $I_i \setminus \{ i \}$.  (Here the indices are taken modulo $n$.)
If $i \not \in I_{i}$, then we should have $I_{i+1} = I_i$. 

In other words, $I_{i+1}$ is obtained from $I_i$ by deleting $i$ and adding another element,
or $I_{i+1} = I_i$.  Note that, in the latter case $I_{i+1}=I_i$,  either $i$ belongs to all elements $I_j$
of the Grassmann necklace, or $i$ does not belong to all elements $I_j$ of the necklace.
\end{definition}

Here is an example of a Grassmann necklace: $I_1 = \{1,2,4\}$, $I_2 =
\{2,4,5\}$, $I_3 = \{3,4,5\}$, $I_4 = \{4,5,2\}$, $I_5 = \{5,1,2\}$.

%The corresponding Grassmann necklace is $I_1 = \{1,2,3,6\},I_2 =
%\{2,3,6,8\},I_3 = \{3,6,8,1\},I_4 = \{4,6,8,1\},I_5 = \{6,8,1,2\}, I_6 =
%\{6,8,1,2\}, I_7 = \{7,8,1,2\}, I_8 = \{8,1,2,3\}$.

% There is a certain sense in which the colors $1$ and $-1$ should be thought
% of as corresponding to sending $i$ to itself and sending $i$ to $i +n \mod
% n$, respectively. For more on this perspective, see~\cite{KST}.

Recall the linear order $<_i$ on $[n]$.
We extend  $<_i$ to $k$ element sets, as follows.
For $I=\{i_1, \cdots, i_k \}$ and $J=\{j_1, \cdots, j_k \}$ with 
$i_1 <_i i_2 \cdots <_i i_k$ and $j_1 <_i j_2 \cdots <_i j_k$, define the partial order
$$
I \leq_i J 
\textrm{ if and only if } i_1 \leq_i j_1, \cdots, i_k \leq_i j_k.
$$
In other words, $\leq_i$ is the cyclically shifted termwise partial order on $\binom{[n]}{k}$.

%Let $t \in [n]$ and $I$, $J \in \binom{[n]}{k}$ with 
%$$J=\{j_1, \cdots, j_k \}, \ j_1 <_t j_2 \cdots <_t j_k.$$
%We define a partial order

\begin{definition}
Given a Grassmann necklace $\I=(I_1,\cdots,I_n)$, define the {\it positroid} $\M_{\I}$ to be
$$
\M_{\I} := \{J \in \binom{[n]}{k} \mid I_i \leq_i J \text{ for all } i \in [n] \}.
$$
This turns out to be a special kind of matroid.  
\end{definition}

The term ``positroid'' is an abbreviation for ``totally positive matroid''.
These are exactly matroids that can be represented by totally positive
matrices, see \cite{Post}.

\begin{definition}
Fix a Grassmann necklace $\I=(I_1,\cdots,I_n)$, with corresponding positroid
$\M_{\I}$. Then $\C$ is called a {\it weakly separated collection inside}
$\M_{\I}$ if $\C$ is a weakly separated collection and $\I \subseteq \C
\subseteq \M_{\I}$. We call $\C$ a {\it maximal weakly separated collection
inside} $\M_{\I}$ if it is maximal among weakly separated collections inside
$\M_{\I}$.  
\end{definition}

We have not shown yet that $\I \subseteq \M_{\I}$, or that $\I$ is weakly
separated, so there is a risk that there are no weakly separated collections in
$\M_{\I}$.  We remedy this by the following lemma.

%in Proposition~\ref{p:IinM}.
%We now redeem an earlier promise:

\begin{lemma} \label{lem:IinM}
For any Grassmann necklace $\I$, we have $\I \subseteq \M_{\I}$, and $\I$ is weakly separated.
\end{lemma}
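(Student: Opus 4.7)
The proof reduces everything to one iterated consequence of the defining condition $I_{m+1} \supseteq I_m \setminus \{m\}$. Telescoping along the cyclic arc from $i$ to $j$ (a simple induction on cyclic distance) gives
\[
I_j \;\supseteq\; I_i \setminus [i, j-1],
\]
equivalently $I_i \cap [j, i-1] \subseteq I_j$, for every pair $i, j \in [n]$.

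Weak separation of $\I$ is then immediate. Applying the displayed containment with $(i,j)$ and again with the roles swapped yields
\[
I_i \setminus I_j \;\subseteq\; [i, j-1] \qquad \text{and} \qquad I_j \setminus I_i \;\subseteq\; [j, i-1].
\]
These two cyclic arcs are disjoint, so a chord of the circle drawn between the gap at $i{-}1 \,|\, i$ and the gap at $j{-}1 \,|\, j$ separates $I_i \setminus I_j$ from $I_j \setminus I_i$; hence $I_i \parallel I_j$.

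For the inclusion $\I \subseteq \M_{\I}$, fix $i$ and $j$; I must verify $I_i \leq_i I_j$. I reformulate this termwise comparison as the chain of inequalities $|I_j \cap [i, m]| \leq |I_i \cap [i, m]|$ for every $m \in [n]$, and split into two cases. If $m \in [i, j-1]$, then $[i, m] \subseteq [i, j-1]$; the swapped containment reads $I_j \cap [i, j-1] \subseteq I_i$, and intersecting with $[i, m]$ gives the desired bound. If $m \in [j, i-1]$, the complementary cyclic interval $[m{+}1, i{-}1]$ lies inside $[j, i-1]$, and the original containment gives $I_i \cap [m{+}1, i{-}1] \subseteq I_j \cap [m{+}1, i{-}1]$; passing to complements in $[n]$ via $|I_i| = |I_j| = k$ converts this into the desired inequality.

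The main obstacle is resisting the tempting naive induction that would try to prove step-monotonicity $I_m \leq_i I_{m+1}$ along the necklace: this already fails in small examples, because a transition $I_m \to I_{m+1}$ can insert an element which is very small in the $<_i$ order, so that the partial sums $|I_m \cap [i,m']|$ are not monotone in $m$. The correct formulation therefore uses a set-theoretic telescoping rather than a termwise induction on $\leq_i$, with the termwise comparison extracted only at the end through the two-case analysis above.
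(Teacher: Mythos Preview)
Your proof is correct and follows essentially the same route as the paper: both establish the key pair of containments $I_i\setminus I_j\subseteq[i,j)$ and $I_j\setminus I_i\subseteq[j,i)$, from which weak separation and $I_i\leq_i I_j$ follow. Your telescoping argument is a clean direct proof of these containments, where the paper instead argues by contradiction (an element violating the containment would have to be added twice along the necklace); and your partial-sum case analysis for $I_i\leq_i I_j$ is more explicit than needed, since once every element of $I_i\setminus I_j$ is $<_i$ every element of $I_j\setminus I_i$, the termwise inequality is immediate.
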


\begin{proof}
For every $i$ and $j$ in $[n]$, we must show that $\I_i \leq_i \I_j$ and $\I_i \parallel \I_j$.

By the definition, $I_{r+1}$ is either obtained from $I_r$ by deleting $r$ and adding another element,
or else $I_{r+1} = I_r$.
As we do the changes $I_1 \to I_2 \to \cdots \to I_n\to I_1$, we delete each $r\in[n]$ at most once 
(in the transformation $I_r\to I_{r+1}$).  This implies that we add each $r$ at most once.

Let us show that $I_j\setminus I_i\subseteq [j,i)$.  
Suppose that this is not true and there exists $r\in (I_j\setminus I_i)\cap [i,j)$.
(Note that $I_{r+1} \ne I_r$.  Otherwise, 
$r$ belongs to all elements of the Grassmann necklace, or $r$ does not belong to all elements of the necklace.)
Consider the sequence of changes 
$I_i \to I_{i+1} \to \cdots \to I_r\to I_{r+1}\to \cdots \to I_j$.  
We should have $r\not\in I_i$, $r\in I_r$, $r\not\in I_{r+1}$, $r\in I_j$.
Thus $r$ should be added twice, as we go from $I_i$ to $I_r$ and 
as we go from $I_{r+1}$ to $I_j$.  We get a contradiction.

Thus $I_j\setminus I_i\subseteq [j,i)$ and, similarly,
$I_i\setminus I_j \subseteq [i,j)$.
We conclude that $\I_i \leq_i \I_j$ and $\I_i \parallel \I_j$, as
desired.
\end{proof}

Our main theorem will say, in part, that all maximal weakly separated collections inside $\M_{\I}$ have the same size. 
To describe this cardinality, we define decorated permutations.

\begin{definition} \cite[Definition~13.3]{Post} 
A decorated permutation $\pi^{:} = (\pi, col)$ is a permutation $\pi \in S_n$ together 
with a coloring function $col$ from the set of fixed points $\{i \mid \pi(i) = i\}$ to $\{1,-1\}$. \end{definition}

There is a simple bijection between decorated permutations and
Grassmann necklaces.
% (for same $n$ and all $k=0,1,\dots,n$.)
To go from a Grassmann necklace $\I$ to a decorated permutation
$\pi^{:}=(\pi,col)$, we set $\pi(i) = j$ whenever $I_{i+1} =
(I_{i}\setminus\{i\}) \cup\{j\}$ for $i\ne j$.  If $i \not\in I_{i} = I_{i+1}$
then $\pi(i)=i$ is a fixed point of color $col(i)=1$.  Finally, if $i\in
I_i = I_{i+1}$ then $\pi(i)=i$ is a fixed point of color $col(i)=-1$.

To go from a decorated permutation $\pi^{:}=(\pi,col)$ to a Grassmann necklace $\I$, we set
$$
I_i = \{ j \in [n] \mid j <_i \pi^{-1}(j) \textrm{ or } (\pi(j)=j \textrm{ and } col(j)=-1) \}.
$$

For example, the decorated permutation $\pi^:=(\pi,col)$ with $\pi = 81425736$ and $col(5)=1$ corresponds
to the Grassmann necklace $(I_1,\dots,I_8)$ with
$I_1 = \{1,2,3,6\}$, $I_2 = \{2,3,6,8\}$, $I_3=\{3,6,8,1\}$, $I_4=\{4,6,8,1\}$,
$I_5= \{6,8,1,2\}$, $I_6 = \{6,8,1,2\}$, $I_7= \{7,8,1,2\}$, $I_8=\{8,1,2,3\}$.
For the fact that this is a bijection between Grassmann necklaces and decorated permutations, see~\cite[Theorem 17.1]{Post}.

%There is some invariant associated to a Grassmann necklace called the
%\newword{length} of $\I$. It is denoted by $l(\I)$ and this number will be
%used to express the cardinality of the maximal weakly separated collection of
%$\M_{\I}$. But to define this, it is easier to think of Grassmann necklaces as
%\newword{decorated permutations}. These are combinatorial objects defined in
%\cite{postnikov-2006} that are in bijection with the Grassmann necklaces.

\begin{definition} \cite[Section~17]{Post}
For $i$, $j \in [n]$,  we say that $\{i,j\}$ forms an {\it alignment\/} in $\pi$ if
$i$, $\pi(i)$, $\pi(j)$, $j$ are cyclically ordered (and all distinct). The {\it length\/} $\ell(\pi^{:})$
is defined to be $k(n-k)-A(\pi)$ where $A(\pi)$ is the number of alignments in
$\pi$. We define $\ell(\I)$ to be $\ell(\pi^{:})$ where $\pi^{:}$ is the
associated decorated permutation of $\I$.  
\end{definition}

%Grassmann necklace and decorated permutations will be heavily used in the next
%section, where we will define plabic graphs and each plabic graph will be
%associated to a Grassmann necklace. Since we have defined $l(\I)$, now we are
%ready to state the generalized version of Theorem~\ref{Sconjecture}.

We now state our result for an arbitrary Grassmann cell.

\begin{theorem}
\label{thm:GSConjecture}
Fix any Grassmann necklace $\I$. Every maximal weakly separated collection inside $\M_{\I}$ has cardinality $\ell(\I)+1$. Any two maximal weakly separated collections inside $\M_{\I}$ are linked by a sequence of mutations.
\end{theorem}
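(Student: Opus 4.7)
The plan is to deduce Theorem~\ref{thm:GSConjecture} from Theorem~\ref{th:intro_correspondence}, the bijection $G\mapsto\f(G)$ between reduced plabic graphs associated with $\M_{\I}$ and maximal weakly separated collections $\C$ satisfying $\I\subseteq\C\subseteq\M_{\I}$. Once that bijection is in hand, both the cardinality and the mutation statements follow quickly from known properties of plabic graphs in \cite{Post}.

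For the cardinality, I would invoke the fact from \cite{Post} that every reduced plabic graph whose decorated trip permutation equals $\pi^{:}$ has exactly $\ell(\pi^{:})+1$ faces. This can be seen, for instance, by producing a canonical reduced representative (such as the plabic graph built from the Le-diagram of $\pi^{:}$) and checking the count there, combined with the invariance of the face number under the local moves relating reduced plabic graphs. Since the map $G\mapsto\f(G)$ is a bijection onto face-label collections, $|\C|=|\f(G)|=\ell(\I)+1$.

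For mutation-connectedness, I would use the theorem of \cite{Post} that any two reduced plabic graphs with the same decorated trip permutation are connected by a sequence of square moves, possibly interspersed with contraction/expansion moves that leave the face-label set unchanged. A square move toggles the label of a single quadrilateral face from $Sac$ to $Sbd$ in precisely the configuration of Proposition~\ref{mutation}, while fixing every other face label. Under $\f$, such a sequence of square moves becomes exactly a sequence of mutations of weakly separated collections, so Theorem~\ref{th:intro_correspondence} transports the connectivity statement from plabic graphs to weakly separated collections.

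The main obstacle, of course, is Theorem~\ref{th:intro_correspondence} itself, and in particular the surjectivity of $G\mapsto\f(G)$: one must show that \emph{every} maximal weakly separated collection inside $\M_{\I}$ actually arises as the set of face labels of some reduced plabic graph for $\M_{\I}$. This is precisely where the \emph{plabic tiling} construction announced in the introduction does the real work. The strategy is to associate to any (not necessarily maximal) weakly separated collection $\C\subseteq\M_{\I}$ containing $\I$ a planar bicolored surface whose $0$-cells are labeled by $\C$; show that the maximality of $\C$ forces this tiling to fill out a disk bounded by the Grassmann necklace; and then dualize to obtain a reduced plabic graph with face labels exactly $\C$. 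Injectivity is comparatively routine, since the bicolored vertex structure of a reduced plabic graph, and hence the graph itself, can be read off from the collection of labels of faces meeting each internal vertex.
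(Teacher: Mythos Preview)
Your proposal is correct and follows essentially the same route as the paper: the paper explicitly reduces Theorem~\ref{thm:GSConjecture} to Theorem~\ref{th:intro_correspondence} (restated as Theorem~\ref{thm:WSisPG}/\ref{thm:WSisFGPos}), invoking \cite[Theorem~13.4]{Post} for move-connectivity and the face count $\ell(\pi^:)+1$ (Theorem~\ref{thm:P2}) to conclude. Your identification of surjectivity of $G\mapsto\f(G)$ as the crux, and of the plabic-tiling ``fill the disk and dualize'' argument as the mechanism, matches Sections~\ref{sec:tilings}--\ref{sec:final} exactly.
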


As a particular case, let $I_i = \{i,i+1,\dots,i+k-1\}\subset [n]$. (The entries of $I_i$ are taken modulo $n$.) 
Then $\M_{\I} = \binom{[n]}{k}$ and the above theorem becomes Theorem~\ref{SConjecture}. In Section~\ref{sec:Connection_with_LZ}, we explain how this theorem also incorporates the conjectures of Leclerc and Zelevinsky.

\section{Decomposition into connected components}  \label{Connected}

Let $\I$ be a Grassmann necklace, let $\pi^:$ be the corresponding decorated
permutation, and let $\M$ be the corresponding positroid $\M_{\I}$. 

The connected components of $\pi^:$, $\I$, and $\M$ are certain decorated permutations,
Grassmann necklaces, and positroids, whose ground sets may no longer be $[n]$ but rather
subsets of $[n]$, which inherit their circular order from $[n]$.  
%(Naturally, these objects can be defined on any cyclically
%ordered ground set.  We picked the ground set $[n]$ only for simplicity of notation.)

\begin{example}
The arrows in the left hand side of Figure~\ref{fig:Disconnected} form a decorated permutation with $(k,n)=(5,10)$.
The connected components are $019$, $2378$ and $456$. 
The $5$ element set drawn on the faces of this graph form a weakly separated collection for this $\pi$.
The reader who is curious about the bipartite graph in Figure~\ref{fig:Disconnected}, and about the right hand side of the figure, will have her curiosity satisfied in Sections~\ref{sec:PlabicGraphs} and~\ref{sec:tilings}.
\end{example}

\begin{definition}
Let $[n] = S_1 \sqcup S_2 \sqcup \cdots \sqcup S_r$ be a partition of $[n]$ into disjoint subsets. We say that $[n]$ is \newword{noncrossing} if, for any circularly ordered $(a,b,c,d)$, we have $\{ a,c \} \subseteq S_i$ and $\{ b,d \} \subseteq S_j$ then $i=j$. 
\end{definition}

See, for example,~\cite{Simion} for background on noncrossing partitions.
The following is obvious from the definition:
\begin{prop}
The common refinement of two noncrossing partitions is noncrossing.
\end{prop}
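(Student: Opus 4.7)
The plan is to prove this by a direct unpacking of the definitions, essentially via contrapositive. First I would note that the common refinement $P$ of two partitions $P_1, P_2$ of $[n]$ is characterized by the rule that $i,j$ lie in a common block of $P$ if and only if they lie in a common block of both $P_1$ and $P_2$; this is how the proof will use $P$.

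Next, I would set up the proof by contradiction: suppose $P$ is crossing, so there exist cyclically ordered $a,b,c,d$ with $\{a,c\}$ contained in some block $T$ of $P$ and $\{b,d\}$ contained in some block $T'$ of $P$, yet $T \ne T'$. Because $T \ne T'$, the pair $a,c$ and the pair $b,d$ must differ in at least one of $P_1$ or $P_2$; say without loss of generality they lie in different blocks of $P_1$.

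Then I would apply the noncrossing hypothesis to $P_1$: since $\{a,c\}$ is contained in some single block $S_i$ of $P_1$ (because it lies inside $T$, which refines $P_1$), and $\{b,d\}$ is contained in some single block $S_j$ of $P_1$, and $a,b,c,d$ are cyclically ordered, the noncrossing condition forces $i = j$. This contradicts the assumption that $a,c$ and $b,d$ lie in different blocks of $P_1$, completing the proof.

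There is no real obstacle here; the only thing to be careful about is making the logical direction of the definition precise (the quantifier reads ``if both pairs lie in a single block each, then those blocks coincide,'' and one has to apply it in exactly that form). The entire argument should fit in a few lines once these definitional checks are made explicit.
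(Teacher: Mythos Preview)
Your argument is correct. The paper itself does not supply a proof at all---it simply declares the proposition ``obvious from the definition''---so your write-up is just a careful unpacking of exactly that observation, and there is nothing to compare.
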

Therefore, the following definition makes sense.

\begin{definition}
Let $\pi^{\colon}$ be a decorated permutation. Let $[n] = \bigsqcup S_i$ be the finest noncrossing partition of $[n]$ such that, if $i \in S_j$ then $\pi(i) \in S_j$. 

Let $\pi^:_{(j)}$ be the restriction of $\pi^:$ to the set $S_j$,
and let $\I_{(j)}$ be the associated Grassmann necklace on the ground set $S_j$, 
for $j=1,\dots,r$. 

We call $\pi^:_{(j)}$ the {\it connected components\/} of $\pi^:$,
and $\I_{(j)}$ the {\it connected components\/} of $\I$. 

We say that $\pi^:$ and $\I$ are connected if they have exactly one connected component.
\end{definition}

\begin{lemma} \label{lem:TwoIntervals}
The decorated permutation $\pi^{\colon}$ is disconnected if and only if there are two circular intervals $[i,j)$ and $[j,i)$ such that $\pi$ takes $[i,j)$ and $[j,i)$ to themselves.
\end{lemma}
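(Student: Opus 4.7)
The reverse direction is immediate from the definitions: if $\pi$ preserves both $[i,j)$ and $[j,i)$, then $\{[i,j),[j,i)\}$ is a noncrossing partition of $[n]$ into two blocks, each preserved by $\pi$. Hence the finest noncrossing $\pi$-invariant partition, being a common refinement, has at least two blocks, so $\pi^{\colon}$ is disconnected.

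For the forward direction, assume $\pi^{\colon}$ is disconnected, so that the finest noncrossing $\pi$-invariant partition $[n] = S_1 \sqcup \cdots \sqcup S_r$ satisfies $r \geq 2$. My plan is to locate some block $S_k$ which is itself a single circular arc, say $S_k = [i,j)$. Granted this, $\pi$ takes $S_k$ into itself by invariance, and it takes the complement $[j,i) = \bigsqcup_{\ell \ne k} S_\ell$ into itself because each other block is $\pi$-invariant; since $\pi$ is a permutation these inclusions become equalities, yielding the desired pair of intervals.

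The crux is therefore the following contiguous-block claim: \emph{every noncrossing partition of $[n]$ with at least two blocks has a block that is a single cyclic arc of $[n]$.} I would prove this by induction on $n$. Pick any block $S$; if $S$ is already a single arc we are done, so otherwise choose cyclically consecutive elements $a, a'$ of $S$ (consecutive within $S$) that are not adjacent in $[n]$, and let $G = (a, a') \subseteq [n]$ be the resulting nonempty open gap. Noncrossingness applied to the pair $\{a, a'\} \subseteq S$ forces every block meeting $G$ to be contained in $G$: a block $T$ with elements $b \in G$ and $c \in [n] \setminus (S \cup G)$ would give a cyclically ordered quadruple $(a,b,a',c)$ with $\{a,a'\} \subseteq S$ and $\{b,c\} \subseteq T$, a contradiction. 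Hence $G$ carries a noncrossing partition (inherited from the original) on a strictly smaller cyclic ground set, and by induction we find a block which is contiguous in $G$; but a contiguous sub-arc of the arc $G$ is a contiguous arc in $[n]$, so we are done.

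The main obstacle I anticipate is simply taking care with the base of the induction: if $G$ already consists of a single block, that block itself is the desired contiguous arc and no further recursion is needed; and one should observe at the outset that since $r \geq 2$ the contiguous block produced is automatically a proper subset of $[n]$, so the complement $[j,i)$ is nonempty. Once the contiguous-block claim is in hand, both directions of the lemma assemble immediately as above.
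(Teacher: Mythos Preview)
Your proposal is correct and follows essentially the same route as the paper. The paper's proof is terse: for the forward direction it simply asserts that ``any nontrivial noncrossing partition can be coarsened to a pair of intervals of this form,'' whereas you supply a complete inductive proof of this fact via the contiguous-block claim. The underlying idea is identical---find a block of the noncrossing partition that is a single cyclic arc, then that arc and its complement give the required pair of intervals---and your argument is a natural way to justify what the paper leaves implicit.
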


\begin{proof}
If such intervals exist, then the pair $[n] = [i,j) \sqcup [j,i)$ is a noncrossing partition preserved by $\pi$. So there is a nontrivial noncrossing partition preserved by $\pi$ and $\pi$ is not connected. 
Conversely, any nontrivial noncrossing permutation can be coarsened to a pair of intervals of this form so, if $\pi$ is disconnected, then there is a pair of intervals of this form.
\end{proof}

Note that each fixed point of $\pi^:$ (of either color) forms a connected component.

\begin{lemma}
A Grassmann necklace $\I=(I_1,\dots,I_n)$ is connected if and only if the sets $I_1,\dots, I_n$ are all distinct. 
\end{lemma}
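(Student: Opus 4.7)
The plan is to prove a sharper statement, from which the lemma is immediate: for any $i \neq j$ in $[n]$, $I_i = I_j$ if and only if $\pi$ preserves the circular interval $[i,j)$. Given this refined claim, the lemma follows at once from Lemma~\ref{lem:TwoIntervals}: $\pi^{\colon}$ is disconnected iff $\pi$ preserves some pair of complementary arcs $[i,j)$ and $[j,i)$ with $i\neq j$, which by the refined claim is equivalent to $I_i = I_j$ for some $i\neq j$, i.e., to the $I_r$ failing to all be distinct.

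To prove the refined claim I would work directly from the explicit formula recorded in Section~\ref{sec:Grneck},
\[
I_r \;=\; \{a \in [n] : a <_r \pi^{-1}(a)\}\ \cup\ \{a : \pi(a)=a,\ \mathrm{col}(a)=-1\}.
\]
Fixed points contribute identically to every $I_r$ and can be set aside. A brief cyclic-order check shows that for non-fixed $a$, the condition $a <_r \pi^{-1}(a)$ holds exactly when $r$ lies in the half-open cyclic arc $(\pi^{-1}(a), a]$. The ``if'' direction is then immediate: if $\pi([i,j)) = [i,j)$, every non-fixed $a$ and its preimage $\pi^{-1}(a)$ lie in a common arc (either $[i,j)$ or $[j,i)$); within each such arc, $<_i$ and $<_j$ restrict to the same linear order, so $a <_i \pi^{-1}(a) \iff a <_j \pi^{-1}(a)$, giving $I_i = I_j$.

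For the ``only if'' direction I would argue by contradiction. Assume $I_i = I_j$ and suppose, toward a contradiction, that some element of $[i,j)$ has image outside $[i,j)$ under $\pi$. Then some pair $\{x, \pi(x)\}$ has its two members on opposite sides of $\{i,j\}$, in one of a handful of boundary configurations: (i) a non-fixed $x \in (i,j)$ with $\pi(x) \in (j,i)$; (ii) $x \in (i,j)$ with $\pi(x) = j$; (iii) $\pi(i) \in (j,i)$ or $\pi(i) = j$; and the symmetric statements obtained by swapping $[i,j)$ with $[j,i)$. In each case, substituting the offending $a$ (namely $a = \pi(x)$, respectively $a = j$, $a = \pi(i)$, etc.) into the membership criterion above produces an explicit element lying in $I_j \setminus I_i$ or in $I_i \setminus I_j$, contradicting $I_i = I_j$. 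The main obstacle is simply organizing this half-dozen boundary cases; each is a one-line verification once the correct $a$ is chosen. Once every violation is eliminated we have $\pi([i,j)) \subseteq [i,j)$, and bijectivity of $\pi$ upgrades this to equality, proving the refined claim.
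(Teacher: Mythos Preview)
Your proof is correct, and your refined claim (that $I_i=I_j$ if and only if $\pi$ preserves the arc $[i,j)$) is a clean strengthening. The approach differs genuinely from the paper's. The paper argues \emph{dynamically}: it follows the chain of transitions $I_i \to I_{i+1} \to \cdots \to I_j$, notes that each element of $[i,j)$ is removed once along the way, and concludes that $I_i=I_j$ forces each such element to be added back in, i.e.\ $\pi(r)\in[i,j)$ for every $r\in[i,j)$. You instead argue \emph{statically} from the explicit membership formula for $I_r$, comparing the conditions $a<_i\pi^{-1}(a)$ and $a<_j\pi^{-1}(a)$ directly. Both routes are short; the paper's counting argument is slightly slicker, while yours reads off the sharper biconditional without any extra work and avoids having to think about the multiset bookkeeping of additions and removals.

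One simplification worth noting: your ``half-dozen boundary cases'' in the only-if direction collapse to a single case. If $\pi$ fails to preserve $[i,j)$, choose any (necessarily non-fixed) $x\in[i,j)$ with $a:=\pi(x)\in[j,i)$. In the order $<_i$ the block $[i,j)$ precedes the block $[j,i)$, so $x<_i a$ and hence $a\notin I_i$; in the order $<_j$ the block $[j,i)$ precedes $[i,j)$, so $a<_j x$ and hence $a\in I_j$. Thus $a\in I_j\setminus I_i$, contradicting $I_i=I_j$. The half-open intervals $[i,j)$ and $[j,i)$ partition $[n]$ with no overlap, so no separate treatment of the endpoints $i$ or $j$, or of the value $\pi(x)=j$, is needed.
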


\begin{proof}
If $\pi^{\colon}$ is disconnected then let $[i,j)$ and $[j,i)$ be as in Lemma~\ref{lem:TwoIntervals}.
As we change from $I_i$ to $I_{i+1}$ to $I_{i+2}$ to \dots to $I_j$, each element of $[i,j)$ is removed once and is added back in once. So $I_i=I_j$. 

Conversely, suppose that $I_i=I_j$. As we change from $I_i$ to $I_{i+1}$ to $I_{i+2}$ and so forth, up to $I_j$, each element of $[i,j)$ is removed once. In order to have $I_i=I_j$, each element of $[i,j)$ must be added back in once. So $\pi$ takes $[i,j)$ to itself.
\end{proof}

In this section, we will explain how to reduce computations about positroids to the connected case.

So, for the rest of this section, suppose that $I_i=I_j$ for some $i \neq j$.
Set $I^1 := [i,j) \cap I_i$ and $I^2 = [j,i) \cap I_i$; set $k_1 = |I^1|$ and $k_2 = |I^2|$.
We will also write $n^1 = |[i,j)|$ and $n^2 = |[j,i)|$. 

\begin{proposition} \label{DirectSum}
For every $J \in \M$, we have $|J \cap [i,j)| = k^1$ and $|(J \cap [j,i)| = k^2$. 
\end{proposition}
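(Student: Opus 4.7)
The plan is to exploit both defining inequalities $I_i \leq_i J$ and $I_j \leq_j J$ from $J \in \M_{\I}$, using the hypothesis $I_i = I_j$ to link them. Set $m_1 := |J \cap [i,j)|$ and $m_2 := |J \cap [j,i)|$, so that $m_1 + m_2 = k = k_1 + k_2$. The key step is to show that $I_i \leq_i J$ forces $m_1 \leq k_1$; by the symmetric argument applied to $I_j \leq_j J$ (where $I_j = I_i$ is crucial, since we need a set whose intersection with $[i,j)$ has size $k_1$ to test $J$ against in the $<_j$ ordering), we get $m_2 \leq k_2$. Combined with $m_1 + m_2 = k_1 + k_2$, both inequalities must be equalities, which is exactly the desired conclusion.

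To establish $m_1 \leq k_1$, list $I_i$ and $J$ in increasing $<_i$-order as $a_1 <_i \cdots <_i a_k$ and $b_1 <_i \cdots <_i b_k$. Under $<_i$ the interval $[i,j)$ entirely precedes $[j,i)$, so $\{a_1,\dots,a_{k_1}\} = I^1 \subseteq [i,j)$ and $\{a_{k_1+1},\dots,a_k\} = I^2 \subseteq [j,i)$; similarly the first $m_1$ of the $b_r$ lie in $[i,j)$ and the remaining $m_2$ lie in $[j,i)$. If $m_1 \geq k_1 + 1$, then $b_{k_1+1} \in [i,j)$ while $a_{k_1+1}$ (the $<_i$-minimum of $I^2$) lies in $[j,i)$, so $b_{k_1+1} <_i a_{k_1+1}$, contradicting the termwise inequality $I_i \leq_i J$ at position $k_1+1$. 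Hence $m_1 \leq k_1$.

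The argument is short and essentially bookkeeping; the only place to be careful is the behavior of the two enumerations at the $[i,j)/[j,i)$ interface, which is where the potential obstacle lies. Once the $<_i$-order is correctly split across the intervals, the two inequalities $m_1 \leq k_1$ and $m_2 \leq k_2$ combine with $m_1 + m_2 = k_1 + k_2$ to force equality without any further work.
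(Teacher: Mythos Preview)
Your proof is correct and follows essentially the same approach as the paper: both derive the two inequalities $|J \cap [i,j)| \leq k_1$ and $|J \cap [j,i)| \leq k_2$ from the minimality conditions $I_i \leq_i J$ and $I_j \leq_j J$ (using $I_i = I_j$), then add them to force equality. The only difference is that the paper states the implication ``$I_i \leq_i J \Rightarrow |[i,j) \cap J| \leq |[i,j) \cap I_i|$'' as a one-line consequence of $\leq_i$-minimality, whereas you spell out the termwise enumeration argument explicitly.
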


\begin{proof} 
Since $I_i$ is the $\leq_i$ minimal element of $\M$, we have 
$$| [i,j) \cap J| \leq | [i,j) \cap I_i |=k^1$$
for all $J \in \M$. But also, similarly,
$$| [j,i) \cap J| \leq | [j,i) \cap I_j | =k^2 $$
Adding these inequalities together, we see that
$$|J| = |[i,j) \cap J| + | [j,i) \cap J| \leq k^1 + k^2 = k.$$

But, in fact, $|J| =k$. So we have equality at every step of the process.
In particular,  $|J \cap [i,j)| = k^1$ and $|J \cap [j,i)| = k^2$.
\end{proof}

\begin{proposition} \label{p:ItsDirectSum}
The matroid $\M$ is a direct sum of two matroids $\M^1$ and $\M^2$, supported on the ground sets  $[i,j)$ and $[j,i)$, having ranks $k^1$ and $k^2$.
In other words, there are matroids $\M^1$ and $\M^2$ such that $J$ is in $\M$ if and only if $J \cap [i,j)$ is in $\M^1$ and $J \cap [j,i)$ is in $\M^2$.
\end{proposition}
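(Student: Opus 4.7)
The plan is to construct $\M^1$ and $\M^2$ as positroids of two sub-Grassmann-necklaces obtained by restricting $\I$, and then verify the direct-sum decomposition by reformulating the partial order $\leq_r$ in terms of cyclic-interval sizes.

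The argument in Lemma~\ref{lem:TwoIntervals}, together with $I_i = I_j$, shows that the decorated permutation $\pi$ associated with $\I$ preserves the circular intervals $[i,j)$ and $[j,i)$. Consequently, for $r \in [i,j)$ the transition $I_r \to I_{r+1}$ either does nothing or removes $r$ and inserts $\pi(r)$, with both elements in $[i,j)$. Setting $I^1_r := I_r \cap [i,j)$ for $r \in [i,j)$ thus gives a Grassmann necklace on the circularly ordered ground set $[i,j)$, which closes up because $I^1_j = I_j \cap [i,j) = I_i \cap [i,j) = I^1_i$. Define $\I^2$ symmetrically on $[j,i)$, and set $\M^1 := \M_{\I^1}$ and $\M^2 := \M_{\I^2}$; these are positroids, hence matroids, of ranks $k^1$ and $k^2$.

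Next I would prove the equivalence $J \in \M$ iff $J \cap [i,j) \in \M^1$ and $J \cap [j,i) \in \M^2$. The main tool is the standard reformulation: for equal-size subsets $A, B$ of a cyclically ordered set, $A \leq_r B$ if and only if $|A \cap [r,s)| \geq |B \cap [r,s)|$ for every $s$ (which one verifies by noting that $a_t <_r s$ iff $t \leq |A \cap [r,s)|$). Thus $J \in \M$ becomes the family of dominance inequalities $|I_r \cap [r,s)| \geq |J \cap [r,s)|$ for all $r,s \in [n]$, and likewise for $\M^1$ and $\M^2$. Fix $r \in [i,j)$ (the case $r \in [j,i)$ is symmetric). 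Since $\pi$ preserves $[j,i)$, we have $I_r \cap [j,i) = I_i \cap [j,i) = I^2_j$, the base of $\I^2$; and by Proposition~\ref{DirectSum}, every $J \in \M$ satisfies $|J \cap [j,i)| = k^2$. Splitting $[r,s)$ into its parts in $[i,j)$ and $[j,i)$ and case-splitting on whether $s$ lies in $[r,j)$, $[j,i)$, or $[i,r)$, one finds in each case that the $\M$-inequality at $(r,s)$ either reduces to a single $\M^1$-inequality (when $s$ avoids $[j,i)$) or is the sum of an $\M^1$-inequality at $s=j$ and an $\M^2$-inequality based at $j$ (when $s \in [j,i)$). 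The forward direction then follows by specializing $s$ appropriately, the backward by summing, and $k^1 + k^2 = k$ gives $\M = \M^1 \oplus \M^2$.

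The principal bookkeeping obstacle is keeping the cyclic intervals straight: for $r \in [i,j)$ the order $<_r$ on $[n]$ restricted to $[j,i)$ starts precisely at $j$, which is exactly why only the ``base-point $j$'' inequalities of $\M^2$ are needed (matching the base of $\I^2$); and the restricted order on $[i,j)$ must be identified with the natural $<_r$-order on $[i,j)$ as a cyclically ordered ground set in its own right. Once this identification is made explicit, the case analysis is mechanical.
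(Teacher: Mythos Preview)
Your proposal is correct, but the route is quite different from the paper's, and it is worth noting how.

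The paper's proof of this proposition is a single sentence: it invokes Proposition~\ref{DirectSum} (every basis $J$ of $\M$ satisfies $|J\cap[i,j)|=k^1$ and $|J\cap[j,i)|=k^2$) and then cites a standard matroid-theory fact \cite[Theorem~7.6.4]{Bry} that this rank condition forces $\M=\M|_{[i,j)}\oplus\M|_{[j,i)}$.  No explicit description of $\M^1$ or $\M^2$ is given at this stage; only later does the paper identify them as positroids for the sub-necklaces $\I^1,\I^2$.

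Your argument, by contrast, is entirely self-contained: you construct $\M^1,\M^2$ as positroids from the outset, rewrite the order $\leq_r$ as the family of prefix-dominance inequalities $|I_r\cap[r,s)|\geq|J\cap[r,s)|$, and then split each such inequality along the partition $[n]=[i,j)\sqcup[j,i)$.  The key structural input you use---that $I_r\cap[j,i)=I^2$ for all $r\in[i,j)$---is exactly what makes the cross-terms collapse, and the case analysis you outline goes through.  In effect you are proving this proposition and the next one (that $\I^1$ is a Grassmann necklace with associated positroid $\M^1$) in a single stroke.  The trade-off is clear: the paper's proof is shorter but outsources the matroid theory to Brylawski, while yours is longer but elementary and keeps everything inside the positroid framework.
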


For every $J \in \M$, write $J^1 := J \cap [i,j)$ and $J^2 := J \cap [j,i)$.

\begin{proof}
This is an immediate corollary of Proposition~\ref{DirectSum}; see~\cite[Theorem~7.6.4]{Bry} for the fact that this rank condition implies that $\M$ is the direct sum of $\M|_{[i,j)}$ and $\M|_{[j,i)}$.
See \cite[Proposition~7.6.1]{Bry} for the interpretation of this in terms of bases of $\M$.
\end{proof}

\begin{proposition}
For $k \in [i,j]$, the set $I_k$ is of the form $J \cup I^2$ for some $J \in \M^1$. 
For $k \in [j,i]$, the set $I_k$ is of the form $I^1 \cup J$ for some $J \in \M^2$. 
\end{proposition}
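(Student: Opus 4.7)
The plan is to track the Grassmann necklace step by step along the arc from $i$ to $j$, using the direct sum structure from Proposition~\ref{p:ItsDirectSum} to force each update to stay within one arc. The key observation is that going from $I_\ell$ to $I_{\ell+1}$ we either keep the set unchanged, or we remove $\ell$ and add some element $m$; crucially, the index $\ell$ that is removed lies in the cyclic interval $[i,j)$ whenever $\ell \in [i,j)$.

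Suppose first that $k \in [i,j]$. I would induct on the number of steps from $i$ to $k$, with base case $I_i = I^1 \cup I^2$. For the inductive step, consider $\ell$ with $i \leq \ell < k$, so $\ell \in [i,j)$ and in particular $\ell \notin [j,i)$. If $I_{\ell+1} = I_\ell$ there is nothing to check. Otherwise $I_{\ell+1} = (I_\ell \setminus \{\ell\}) \cup \{m\}$; since removing $\ell$ does not alter $I_\ell \cap [j,i)$, and since Proposition~\ref{DirectSum} forces $|I_{\ell+1} \cap [j,i)| = k^2 = |I_\ell \cap [j,i)|$, the added element $m$ cannot lie in $[j,i)$. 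Hence $I_{\ell+1} \cap [j,i) = I_\ell \cap [j,i)$. By the inductive hypothesis, $I_k \cap [j,i) = I_i \cap [j,i) = I^2$.

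Consequently $I_k = (I_k \cap [i,j)) \cup I^2$. Setting $J := I_k \cap [i,j)$, Proposition~\ref{p:ItsDirectSum} tells us $J \in \M^1$ (since $I_k \in \M$ and $\M$ splits as the direct sum $\M^1 \oplus \M^2$), giving the first claim. The second claim, for $k \in [j,i]$, is proved by the symmetric argument: traversing from $j$ to $k$ the removed indices all lie in $[j,i)$, so Proposition~\ref{DirectSum} keeps the intersection with $[i,j)$ fixed at $I^1$.

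There is not really a main obstacle here; the statement is essentially a bookkeeping consequence of the direct sum decomposition. The only point requiring care is making sure the removal/insertion picture interacts correctly with the two circular arcs, which is handled by noting that $\ell \in [i,j)$ during the relevant portion of the walk.
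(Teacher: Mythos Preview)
Your proof is correct and takes a genuinely different route from the paper's argument. The paper does not track the necklace step by step at all; instead it invokes the characterization of $I_k$ as the $\leq_k$-minimal element of $\M$, then uses the direct sum $\M = \M^1 \oplus \M^2$ to split the minimization into two independent minimizations over $\M^1$ and $\M^2$, and finally observes that for $k \in [i,j]$ the orders $\leq_k$ and $\leq_i$ coincide on $[j,i)$, so the $\leq_k$-minimum of $\M^2$ is the same as the $\leq_i$-minimum, namely $I^2$. Your argument instead walks along the necklace from $I_i$ to $I_k$, using the cardinality constraint of Proposition~\ref{DirectSum} at each step to keep the $[j,i)$-part frozen. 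The paper's approach is shorter and extracts the stronger byproduct that $J = I_k \cap [i,j)$ is in fact the $\leq_k$-minimal element of $\M^1$ (which is used in the next proposition to identify the necklace $\I^1$); your approach is more elementary in that it avoids the minimality characterization of the $I_k$ and works directly from the defining recursion of a Grassmann necklace, though you would need the minimality observation anyway for what follows.
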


\begin{proof}
Consider the case that $k \in [i,j]$, the other case is similar.
Recall that $I_k$ is the $\leq_k$ minimal element of $\M$.
Since $\M = \M^1 \oplus \M^2$, we know that $I_k = J^1 \cup J^2$, where $J^r$ is the $\leq_k$ minimal element of $\M^1$.
But, on $[j,i)$, the orders $\leq_i$ and $\leq_k$ coincide, so $J^2$ is the $\leq_i$ minimal element of $\M^2$, namely $I^2$.
\end{proof}

View $[i,j)$ as circularly ordered. 
Let $\I^1$ denote the circularly ordered sequence $(I_i^1, I_{i+1}^1, \ldots, I_{j-1}^1)$.

\begin{proposition}
$\I^1$ is a Grassmann necklace on the ground set $[i,j)$, and $\M^1$ is the associated positroid.
\end{proposition}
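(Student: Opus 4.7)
The plan is to deduce both assertions from the direct-sum decomposition $\M = \M^1 \oplus \M^2$ (Proposition~\ref{p:ItsDirectSum}) together with the explicit formula $I_r = I_r^1 \cup I^2$ for $r \in [i,j]$ supplied by the preceding proposition.

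First, I would verify the Grassmann necklace axioms for $\I^1 = (I_i^1, I_{i+1}^1, \ldots, I_{j-1}^1)$ on the cyclically ordered ground set $[i,j)$. For $r \in [i, j-1)$, intersecting the original axiom $I_{r+1} \supseteq I_r \setminus \{r\}$ with $[i,j)$ (which contains $r$) immediately gives $I_{r+1}^1 \supseteq I_r^1 \setminus \{r\}$, and Proposition~\ref{DirectSum} supplies $|I_r^1| = |I_{r+1}^1| = k^1$, forcing either equality or the expected single-element swap. For the cyclic wraparound $r = j-1 \mapsto i$ in $[i,j)$, the hypothesis $I_i = I_j$ is crucial: the step $I_{j-1}\to I_j$ in the original necklace gives $I_i = I_j \supseteq I_{j-1}\setminus \{j-1\}$, and intersecting with $[i,j)$ yields $I_i^1 \supseteq I_{j-1}^1 \setminus \{j-1\}$. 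The ``no change'' clause follows because, for $r \in [i,j)$, $r \notin I_r^1$ iff $r \notin I_r$, which forces $I_{r+1} = I_r$ and hence $I_{r+1}^1 = I_r^1$.

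Second, I would prove $\M_{\I^1} = \M^1$ via a \emph{splitting lemma}: for $r \in [i,j)$ and subsets $K^1, L^1$ of $[i,j)$ of size $k^1$ and $K^2, L^2$ of $[j,i)$ of size $k^2$, the full comparison $K^1 \cup K^2 \leq_r L^1 \cup L^2$ on $[n]$ is equivalent to the pair of restricted comparisons $K^1 \leq_r L^1$ on $[i,j)$ and $K^2 \leq_r L^2$ on $[j,i)$. This reduces to a termwise bookkeeping exploiting that $<_r$ on $[n]$ decomposes into three consecutive blocks $[r,j)$, $[j,i)$, $[i,r)$, and hence restricts to the natural cyclic orders on $[i,j)$ (starting at $r$) and on $[j,i)$ (starting at $j$). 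Granting this, the inclusion $\M^1 \subseteq \M_{\I^1}$ is immediate: if $J^1 \in \M^1$, then $J^1 \cup I^2 \in \M$, so $I_r \leq_r J^1 \cup I^2$, which splits to $I_r^1 \leq_r J^1$. For the reverse inclusion $\M_{\I^1} \subseteq \M^1$, I would check $I_s \leq_s J^1 \cup I^2$ for all $s \in [n]$: for $s \in [i,j)$ this is the splitting lemma applied to the hypothesis $I_s^1 \leq_s J^1$, while for $s \in [j,i)$ the preceding proposition gives $I_s = I^1 \cup I_s^2$, and I would combine $I^1 = I_i^1 \leq_s J^1$ (using that $<_s$ restricted to $[i,j)$ coincides with $<_i$ when $s \in [j,i)$) with $I_s^2 \leq_s I^2$ (since $I_s^2$ is the $\leq_s$-minimum of $\M^2$, and $<_s$ restricted to $[j,i)$ coincides with $<_j$ composed appropriately) and invoke the splitting lemma again.

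The main obstacle is the splitting lemma, which requires careful indexing through the three-block decomposition of $<_r$. In particular, one must confirm that whenever the two sets have different numbers of elements in the $[r,j)$-block, the extra elements from the smaller set (in $[r,j)$) are automatically $<_r$-smaller than the corresponding elements of the larger set (in $[j,i)$), and the analogous assertion holds at the $[i,r)$-block. Once this combinatorial bookkeeping is in place, every remaining step follows mechanically from the direct-sum structure already encapsulated by Propositions~\ref{DirectSum} and~\ref{p:ItsDirectSum}.
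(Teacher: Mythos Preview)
Your approach is essentially the same as the paper's: deduce the necklace axiom for $\I^1$ by intersecting the original axiom with $[i,j)$ (using $I_i=I_j$ for the wraparound), and establish $\M^1=\M_{\I^1}$ by showing $J^1\in\M^1 \Leftrightarrow J^1\cup I^2\in\M$ via comparison with all $I_s$. The paper states the two inclusions more tersely, without isolating a separate ``splitting lemma'', but the logical structure is identical.

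There is, however, a genuine error in your formulation of the splitting lemma. As a \emph{biconditional}, it is false. Take $n=6$, $[i,j)=\{1,2,3\}$, $[j,i)=\{4,5,6\}$, $k^1=k^2=1$, $r=2$. With $K^1=\{3\}$, $L^1=\{1\}$, $K^2=\{6\}$, $L^2=\{4\}$: in $<_2$ on $[6]$ the sorted lists are $(3,6)$ and $(4,1)$, so $\{3,6\}\leq_2\{1,4\}$; yet $K^2=\{6\}\not\leq_4\{4\}=L^2$ on $[j,i)$. The failure is precisely in the block alignment you flag: when $|K^1\cap[r,j)|\neq|L^1\cap[r,j)|$, an element of $K^1$ in the first block can be termwise-compared against an element of $L^2$ in the middle block, and this cross-comparison cannot be unmixed into separate $[i,j)$ and $[j,i)$ inequalities.

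Fortunately, the directions you actually invoke all hold. The implication \emph{from} the two restricted comparisons \emph{to} the full one is always valid (your cases 2 and 3). And for $r\in[i,j)$, the projection from the full comparison to $K^1\leq_r L^1$ on $[i,j)$ is valid as well, because the first and last blocks of the $<_r$-order on $[n]$ are exactly the two pieces of $[i,j)$ (your case 1). You never need to project to the $[j,i)$ side from a full comparison with $r\in[i,j)$, which is the direction that fails. So your plan goes through once you restate the lemma as the appropriate one-sided implications rather than as an equivalence, or simply assert the needed implications in situ as the paper does.
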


\begin{proof}
For $k \in [i,j)$, we know that $I_{k+1} \supseteq I_k \setminus \{ k \}$. 
Since $I_{k} = I^1_k \cup I^2$, and $I_{k+1} = I^1_{k+1} \cup I^2$, we have $I^1_{k+1} \supseteq I^1_k \setminus \{ k \}$. 
This is the definition of a Grassmann necklace. 
(Note that we have used the condition $I_i=I_j$ to cover the boundary case $k=j-1$.)

Now, we show that $\M$ is the associated positroid.
Consider any $J \in \binom{[i,j)}{k^1}$.
If $J \in M^1$ then $J \cup I^2 \in M$, so $J \cup I^2 \geq_k I_k$ for every $k \in [n]$.
This immediately implies that  $J \geq_k I^1_k$ for every $k \in [i,j)$, so $J$ is in the positroid $\M_{\I^1}$.

Conversely, suppose that $J$ is in the positroid $\M_{\I^1}$.
We wish to show that $J \cup I^2$ is in $\M$.
Reversing the argument of the previous paragraph shows that $J \cup I^2 \geq_k I_k$ for all $k \in [i,j)$.
For $k \in [j,i)$, we know that $I^2 \geq_k I_k^2$ and, since $\geq_k$ and $\geq_i$ coincide on $[i,j)$, we know that $J \geq_k I^1$. So $J \cup I^2 \geq_k I_k$ for $k$ in $[j,i)$ as well.
So $J \in \M$.
\end{proof}

It is easy to check the following:
\begin{proposition}
With the above definitions, $\ell(\I) =\ell(\I^1) + \ell(\I^2)$.
\end{proposition}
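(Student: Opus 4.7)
The plan is a direct computation from the identity $\ell(\pi^{:}) = k(n-k) - A(\pi)$. Writing $k = k^1 + k^2$ and $n = n^1 + n^2$, elementary algebra gives
$$
k(n-k) = k^1(n^1-k^1) + k^2(n^2-k^2) + k^1(n^2-k^2) + k^2(n^1-k^1),
$$
so the claim $\ell(\I) = \ell(\I^1) + \ell(\I^2)$ reduces to showing
$$
A(\pi) - A(\pi^1) - A(\pi^2) = k^1(n^2-k^2) + k^2(n^1-k^1).
$$

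By Proposition~\ref{p:ItsDirectSum}, $\pi$ preserves each of $[i,j)$ and $[j,i)$, and its restrictions are the decorated permutations underlying $\I^1$ and $\I^2$. Since the cyclic order on $[i,j)$ (resp.\ $[j,i)$) coincides with the restriction from $[n]$, every alignment $\{r,s\}$ of $\pi$ whose two entries lie in the same interval is an alignment of the corresponding restricted permutation, and conversely. It therefore suffices to count the \emph{cross alignments} $\{a,b\}$ with $a \in [i,j)$ and $b \in [j,i)$.

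Classify each $c \in [n]$ as \emph{ascent-type} (either $c <_i \pi(c)$ or $c$ a white fixed point) or \emph{descent-type} (either $\pi(c) <_i c$ or $c$ a black fixed point). The defining formula for $I_i$ immediately yields $k^r = \#\{\text{descent-type elements in the } r\text{-th interval}\}$, and complementarily $n^r - k^r$ counts ascent-type elements. For a cross pair both $a, \pi(a) \in [i,j)$ and both $b, \pi(b) \in [j, i)$, so (starting the cyclic order at $i$) the relative order of $\{a, \pi(a), \pi(b), b\}$ is determined by the ascent/descent types of $a$ and $b$ alone. A short case check — treating a white (resp.\ black) fixed point as the degenerate ascent (resp.\ descent) in which $\pi(c)$ lies infinitesimally after (resp.\ before) $c$ — verifies that $\{a,b\}$ is an alignment if and only if exactly one of $a, b$ is of ascent-type. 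Tallying,
$$
A_{\mathrm{cross}} = (n^1-k^1)k^2 + k^1(n^2-k^2),
$$
which is precisely the right-hand side of the reduced identity.

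The main obstacle is the case analysis for $A_{\mathrm{cross}}$ in the presence of fixed points, where the "all distinct" clause in the definition of alignment degenerates; once white and black fixed points are identified with their ascent and descent counterparts (using the combinatorial identities $k^r = \#D^r + \#F^{-,r}$), the remaining bookkeeping is routine.
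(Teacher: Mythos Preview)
Your proof is correct. The paper itself gives no argument beyond ``It is easy to check the following,'' so there is nothing to compare against; your computation via $\ell = k(n-k) - A$ and the ascent/descent classification of cross pairs is exactly the natural way to fill in the details.

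One remark on the fixed-point issue you flag at the end. Taken literally, the parenthetical ``(and all distinct)'' in the paper's definition of alignment would exclude every pair involving a fixed point, and then your formula $A_{\mathrm{cross}} = (n^1-k^1)k^2 + k^1(n^2-k^2)$ would overcount. But the definition is imported from \cite[Section~17]{Post}, where colored fixed points are represented by oriented loops and do participate in alignments according to their color---precisely your ``infinitesimal ascent/descent'' convention. With that reading (which is the one that makes $\ell(\pi^{:}) = k(n-k) - A(\pi)$ correct for all decorated permutations, including those with fixed points), your case analysis goes through cleanly. It would strengthen the write-up to state this explicitly rather than leaving it as ``routine bookkeeping,'' since a reader working only from the paper's displayed definition could be misled.
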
 

We now study weakly separated collections in $\M$. 

\begin{lemma} \label{SplitLemma}
Let $J = J^1 \cup J^2 \in \M$. If $J$ is weakly separated from $I^1 \cup I^2$, then either $I^1=J^1$ or $I^2 = J^2$.
\end{lemma}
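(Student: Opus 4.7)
The plan is to argue by contradiction: assuming $I^1 \ne J^1$ and $I^2 \ne J^2$, I would produce a cyclically ordered quadruple $(a,b,a',b')$ with $a, a' \in I \setminus J$ and $b, b' \in J \setminus I$, which directly violates weak separation.

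The first step is to upgrade the decomposition to an order-theoretic statement. By Proposition~\ref{DirectSum}, every element of $\M$ has exactly $k^1$ members in $[i,j)$ and $k^2$ in $[j,i)$, so in particular $|I^r \setminus J^r| = |J^r \setminus I^r|$ for $r = 1, 2$; under my contrary assumption these common cardinalities are strictly positive for both $r$. Moreover, because the $<_i$-order puts all of $[i,j)$ before all of $[j,i)$ and both $I$ and $J$ fill each block with exactly the same number of elements, the inequality $I \leq_i J$---which holds because $I = I_i$ is the $\leq_i$-minimum of $\M$---splits cleanly into two componentwise inequalities, one comparing $I^1$ to $J^1$ inside $[i,j)$ and one comparing $I^2$ to $J^2$ inside $[j,i)$.

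Next I would invoke the standard fact that, for equal-size subsets $A$ and $B$ of a linearly ordered set with $A$ componentwise at most $B$, every initial segment contains at least as many elements of $A$ as of $B$; consequently the first element of the symmetric difference $A \triangle B$ (in the given order) lies in $A$. Applying this to $(I^1, J^1)$ on $[i,j)$ with its $<_i$-order yields an $r_1 \in I^1 \setminus J^1$ and a (necessarily later) $b_1 \in J^1 \setminus I^1$ with $r_1 <_i b_1$; applying it to $(I^2, J^2)$ on $[j,i)$ (where $<_i$ induces the natural order starting at $j$) yields $r_2 \in I^2 \setminus J^2$ and $b_2 \in J^2 \setminus I^2$ with $r_2 <_i b_2$.

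To conclude, I would observe that all of $[i,j)$ precedes all of $[j,i)$ in $<_i$-order, so the four elements appear in the cyclic order $r_1, b_1, r_2, b_2$; with $r_1, r_2 \in I \setminus J$ and $b_1, b_2 \in J \setminus I$, this is exactly the pattern forbidden by the definition of weak separation, giving the desired contradiction. The only mildly subtle point---the ``main obstacle,'' such as it is---is the splitting of $I \leq_i J$ into the two block inequalities, which genuinely uses Proposition~\ref{DirectSum} to match the block sizes of $I$ and $J$; everything after that is bookkeeping.
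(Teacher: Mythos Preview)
Your proof is correct and follows essentially the same route as the paper's: assume both $I^1\neq J^1$ and $I^2\neq J^2$, use the $\leq_i$-minimality of $I_i=I^1\cup I^2$ (split across the two blocks via Proposition~\ref{DirectSum}) to find in each block an element of $I^r\setminus J^r$ preceding one of $J^r\setminus I^r$, and observe that the resulting cyclic pattern violates weak separation. The only difference is that you spell out the block-splitting of $I\leq_i J$ more explicitly than the paper does.
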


\begin{proof}
Suppose, to the contrary, that $J^1 \neq I^1$ and $J^2 \neq I^2$. 
Since $I^1 <_i J^1$, there are $a$ and $b \in [i,j)$, with $i \leq_i a <_i b$, such that $a \in I^1 \setminus J^1$ and $b \in J^1 \setminus I^1$. 
Similarly, there are $c$ and $d \in [j,i)$, with $j \leq_j c <_j d$, such that $c \in I^2 \setminus J^2$ and $d \in J^2 \setminus I^2$. 
Then $a$ and $c$ are in $I^1 \cup I^2 \setminus J^1 \cup J^2$, while $b$ and $d$ are in $J^1 \cup J^2 \setminus I^1 \cup I^2$.
So $I^1 \cup I^2$ and $J$ are not weakly separated.
\end{proof}

\begin{proposition} \label{SplittingSummary}
If $\C$ is a weakly separated collection in $\M$, then there are weakly separated collections $\C^1$ and $\C^2$ in $\M^1$ and $\M^2$ such that
$$\C = \{ J \cup  I^2 : J \in \C^1 \} \cup \{ I^1 \cup  J : J \in \C^2 \} .$$
Conversely, if $\C^1$ and $\C^2$ are weakly separated collections in $\M^1$ and $\M^2$, then the above formula defines a weakly separated collection in $\M$.
The collection $\C$ is maximal if and only if $\C^1$ and $\C^2$ are.
\end{proposition}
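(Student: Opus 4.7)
The plan decomposes into three parts: forward decomposition of $\C$ into $\C^1, \C^2$; reverse assembly; and maximality. For the forward direction, since $I_i = I^1 \cup I^2$ lies in $\I \subseteq \C$, Lemma~\ref{SplitLemma} forces every $J \in \C$ to satisfy $J^1 = I^1$ or $J^2 = I^2$. Setting $\C^1 := \{J^1 : J \in \C,\ J^2 = I^2\}$ and $\C^2 := \{J^2 : J \in \C,\ J^1 = I^1\}$ recovers $\C$ in the stated form. The containments $\I^1 \subseteq \C^1 \subseteq \M^1$ follow from Proposition~\ref{p:ItsDirectSum} and the Grassmann-necklace decomposition proved just above, and $\C^1$ is weakly separated because any two elements $J \cup I^2$ and $J' \cup I^2$ of $\C$ have symmetric difference localized to $[i, j)$, so their weak separation in $[n]$ reduces to weak separation of $J, J'$ in $[i, j)$.

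For the reverse direction, given $\C^1$ and $\C^2$ we define $\C$ by the stated formula; the inclusions $\I \subseteq \C \subseteq \M$ are routine, and pairs from the same half of $\C$ are weakly separated as in the forward direction. The delicate case is a cross pair $A := J \cup I^2$ and $B := I^1 \cup J'$. The key intermediate claim is that if $I^1 \leq_i J$ (automatic from $J \in \M^1$) and $J \parallel I^1$, then in the linear $<_i$ order on $[i, j)$ every element of $I^1 \setminus J$ precedes every element of $J \setminus I^1$. The dominance $I^1 \leq_i J$ gives the ballot inequality $|(I^1 \setminus J) \cap [i, x]| \geq |(J \setminus I^1) \cap [i, x]|$ for every $x \in [i,j)$, and combined with the chord separation on the cycle $[i, j)$ coming from $J \parallel I^1$ this forces the pure pattern $(I^1 \setminus J)^m (J \setminus I^1)^m$; the analogous linear pattern holds on $[j, i)$ for $I^2 \leq_j J'$.

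To rule out $A \not\parallel B$, we suppose a cyclic quadruple $a, b, a', b' \in [n]$ with $a, a' \in A \setminus B$ and $b, b' \in B \setminus A$, and do a case analysis on how these four elements distribute between $[i, j)$ and $[j, i)$. All-four-in-one-interval contradicts the within-interval weak separation from $\C^1$ or $\C^2$. Three-in-one-interval produces a sub-pattern of the form $X, Y, X$ or $Y, X, Y$ (with $X = J \setminus I^1$, $Y = I^1 \setminus J$, or their $[j,i)$ analogs) in the appropriate linear order, contradicting the linear pattern above. The two-in-each-interval cases split further: five of the six sub-cases yield an analogous $X$-before-$Y$ sub-pattern contradicting the linear pattern, and the remaining sub-case -- both $A$-elements in one interval, both $B$-elements in the other -- is incompatible with the cyclic order $a, b, a', b'$ itself, since the four elements must appear around the circle as $a', a, b', b$ up to rotation, which is not a rotation of $a, b, a', b'$.

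Finally, maximality transfers in both directions by contraposition: any proper extension of $\C^1$ produces, via the reverse construction just proved, a proper extension of $\C$; conversely, any proper extension of $\C$ decomposes via the forward direction into a proper extension of $\C^1$ or $\C^2$. The principal obstacle is the cross-pair analysis in the reverse direction, and in particular the linear pattern lemma that extracts $(I^1 \setminus J)^m (J \setminus I^1)^m$ from the combination of $I^1 \leq_i J$ with $J \parallel I^1$.
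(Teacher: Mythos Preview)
Your proof is correct and follows the same three-part structure as the paper: forward decomposition via Lemma~\ref{SplitLemma}, direct verification of the converse, and maximality by contraposition. The paper dispatches the converse in one sentence (``it is easy to check''); you fill that in with the linear-pattern lemma and a case analysis, so in substance you have supplied what the paper omits rather than taken a different route.

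Two small remarks on the cross-pair analysis. First, once you have the linear-pattern lemma on both halves, the case analysis is unnecessary: writing out $A\setminus B$ and $B\setminus A$ on $[i,j)$ and $[j,i)$ and concatenating in $<_i$ order yields the pattern
\[
(I^1\setminus J)\,(J\setminus I^1)\,(I^2\setminus J')\,(J'\setminus I^2)
\;=\;
(B\setminus A)\,(A\setminus B)\,(A\setminus B)\,(B\setminus A),
\]
which is already a chord-separable arrangement, so $A\parallel B$ follows at once. Second, your count in the two-in-each-interval case is slightly off: of the six ways to place two of $a,b,a',b'$ in $[i,j)$, the two non-adjacent choices $\{a,a'\}$ and $\{b,b'\}$ are both incompatible with the cyclic order, and the four adjacent choices each yield a pattern contradiction (sometimes on $[i,j)$, sometimes on $[j,i)$). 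So it is $4+2$, not $5+1$; this does not affect the correctness of the argument.
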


\begin{proof}
First, suppose that $\C$ is a weakly separated collection in $\M$. Since $\I \subset \C$, we have $I^1 \cup I^2 \in \C$. 
By Lemma~\ref{SplitLemma}, every $J \in \C$ is either of the form $J^1 \cup I^2$, or $I^1 \cup J^2$. 
Let $\C^r$ be the collection of all sets $J^r$ for which $J^r \cup I^{3-r}$ is in $\C$.
The condition that $\C$ is weakly separated implies that $\C^r$ is; the condition that $\I \subseteq \C \subseteq \M$ implies that $\I^r \subseteq \C^r \subseteq \M^r$. 
So $\C^r$ is a weakly separated collection in $\M^r$ and it is clear that $\C$ is built from $\C^1$ and $\C^2$ in the indicated manner.

Conversely, it is easy to check that, if $\C^1$ and $\C^2$ are weakly separated collections in $\M^1$ and $\M^2$, then the above formula gives a weakly separated collection in $\M$. 

Finally, if $\C \subsetneq \C'$ with $\C'$ a weakly separated collection in $\M$, then either $\C^1 \subsetneq (\C')^1$ or $\C^2 \subsetneq (\C')^2$. So, if $\C$ is not maximal, either $\C^1$ or $\C^2$ is not. The converse is similar.
\end{proof}

In summary, the Grassmann necklace $\I$ can be described in terms of smaller necklaces $\I^1$ and $\I^2$; 
the positroid $\M$ can be described in terms of smaller positroids $\M^1$ and $\M^2$;
weakly separated collections $\C$ in $\M$ can be described in terms of weakly separated collections in $\M^1$ and $\M^2$.

\begin{remark}
The connected components $S_i$ are the connected components of $\M$ in the sense of matroid theory. We have partially proved this in this section. In order to fully establish this result we would also need to show that, if $\pi^{\colon}$ is connected, then the matroid $\M$ is connected. We don't need this fact, but it is true.
\end{remark}

\section{Plabic graphs} \label{sec:PlabicGraphs}

Plabic graphs were introduced in \cite{Post}. 
We will use these graphs to represent weakly separated collections and the properties of these graphs will be a major key in our proof of the conjecture. For more details on plabic graphs, see \cite{Post}.

\begin{definition}
A \newword{planar bicolored graph}, or simply a \newword{plabic graph} is a planar undirected graph $G$ drawn inside a disk. The vertices on the boundary are called boundary vertices, and are labeled in clockwise order by $[n]$. All vertices in the graph are colored either white or black.
\end{definition}

%NEW MATERIAL HERE

Let $G$ be a plabic graph in the disc $D$. We will draw $n$ directed paths, called \newword{strands}, within the disc $D$, each starting from and ending at a boundary vertex of $G$. 

\begin{definition}
The construction in this definition is depicted in Figure~\ref{fig:pla_label}; the numeric labels in that figure will be explained below. The strands are drawn as follows: For each edge of $G$,  draw two strand segments. If the ends of the segment are the same color, then the two strands should be parallel to the edge without crossing, and should run in opposite directions. If the two ends are different colors, then the two strands should cross, with one running towards each endpoint. As we will discuss in the below remark, for most purposes, we can reduce to the case that $G$ is bipartite, so the latter case is the important one.  Around each vertex, connect up the ends of the strands so that they turn right at each black vertex and left at each white vertex. We will have $n$ strands leading from $\partial G$ to itself, plus possibly some loops in the interior of $G$. 
\end{definition}

\begin{figure}
\centerline{\scalebox{0.5}{
\psfrag{123}{\Huge $123$}
\psfrag{234}{\Huge $234$}
\psfrag{345}{\Huge $345$}
\psfrag{456}{\Huge $456$}
\psfrag{567}{\Huge $567$}
\psfrag{678}{\Huge $678$}
\psfrag{178}{\Huge $178$}
\psfrag{128}{\Huge $128$}
\psfrag{127}{\Huge $127$}
\psfrag{137}{\Huge $137$}
\psfrag{136}{\Huge $136$}
\psfrag{135}{\Huge $135$}
\psfrag{134}{\Huge $134$}
\psfrag{167}{\Huge $167$}
\psfrag{156}{\Huge $156$}
\psfrag{145}{\Huge $145$}
\psfrag{1}{\Huge $1$}
\psfrag{2}{\Huge $2$}
\psfrag{3}{\Huge $3$}
\psfrag{4}{\Huge $4$}
\psfrag{5}{\Huge $5$}
\psfrag{6}{\Huge $6$}
\psfrag{7}{\Huge $7$}
\psfrag{8}{\Huge $8$}
\includegraphics{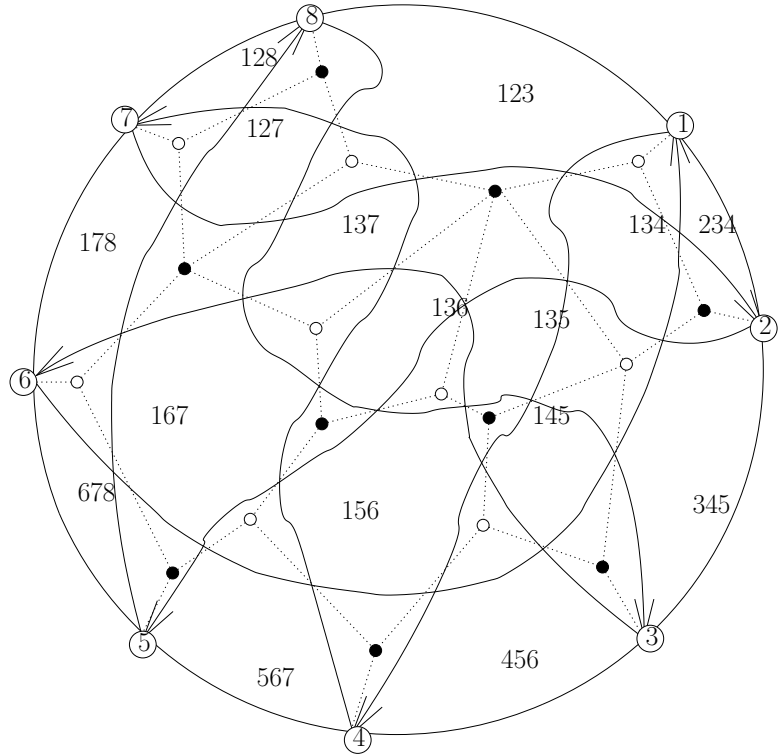}
}}	
	\caption{Labeling faces of a plabic graph}
	\label{fig:pla_label}
\end{figure}

\begin{remark}\label{rem:Contract}
Suppose that $u$ and $v$ are two vertices of $G$, joined by an edge, which have the same color.
Let $G'$ be the graph formed by contracting the edge $(u,v)$ to a single vertex $w$, and coloring $w$ the same color as $u$ and $v$ are colored. Then the strands of $G$ and $G'$ have the same connectivity. See Figure~\ref{Contraction} for a depiction of how contracting an edge leaves strand topology unchanged. For this reason, we can usually reduce any question of interest to the case where $G$ is bipartite. We do not restrict in this paper to bipartite graphs for two reasons: (1) We want to be compatible with the definitions in~\cite{Post}, which does not make this restriction and (2) The description of the square move, (M1) below, would be significantly more complicated.
\end{remark}

\begin{figure}
\centerline{\scalebox{0.5}{
\psfrag{G}{\Huge $G$}
\psfrag{H}{\Huge $G'$}
\includegraphics{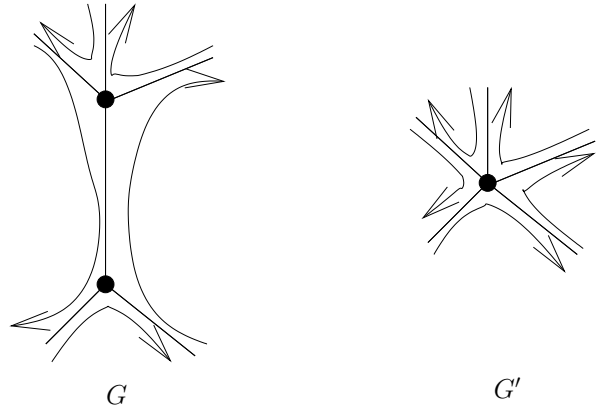}
}}
\caption{The effect of replacing $G$ by $G'$, as in Remark~\ref{rem:Contract}} \label{Contraction}
\end{figure}

A plabic graph is called \newword{reduced} \cite[Section 13]{Post} if the following holds:
\begin{enumerate}
\item The strands cannot be closed loops in the interior of the graph.
\item No strand passes through itself.  The only exception is that we allow simple loops that 
start and end at a boundary vertex $i$.  
\label{NoLoop}
\item For any two strands $\alpha$ and $\beta$, if $\alpha$ and $\beta$ have two common vertices $A$ and $B$,
then one strand, say $\alpha$, is directed from $A$ to $B$, and the other strand $\beta$ is directed from $B$ to $A$.
(That is the crossings of $\alpha$ and $\beta$ occur in opposite orders in the two strands.)
\label{OppositeOrder}
\end{enumerate}

\begin{remark}
Strands, and the reducedness condition, occurs in the physics-inspired literature on quivers. In this literature, strands are called zig-zag paths. Graphs which we call ``reduced'' are said to ``obey condition $Z$" in~\cite[Section 8]{Bock} and are called ``marginally geometrically consistent'' in~\cite[Section 3.4]{Broom}.
\end{remark}

The strand which \emph{ends} at the boundary vertex $i$ is called strand $i$.

\begin{definition} \cite[Section~13]{Post}   For a reduced plabic graph $G$,
let $\pi_G \in S_n$ be the permutation such that
the strand that starts at the boundary vertex $i$ ends at the boundary vertex
$\pi_G(i)$.  A fixed point $\pi_G(i) = i$ corresponds to simple loop at the boundary vertex $i$.
We color a fixed point $i$ of $\pi_G$ as follows: $col(i)=1$ if the corresponding loop is counter-clockwise;
and $col(i) = -1$ if the loop is clockwise.
In this way, we assign the \newword{decorated strand permutation} $\pi_G^{:} = (\pi_G,col)$ to each
reduced plabic graph $G$.
\end{definition}

We will label the faces of a reduced plabic graph with subsets of $[n]$; this construction was first published in~\cite{Scott}.  By condition~\ref{NoLoop}  each strand divides the disk into two parts. For each face $F$ we label that $F$ with the set of those $i \in [n]$ such that $F$ lies to the left of strand $i$. See Figure~\ref{fig:pla_label} for an example. So given a plabic graph $G$, we define $\f(G)$ as the set of labels that occur on each face of that graph.

When we pass from one face $F$ of $G$ to a neighboring one $F'$, we cross two strands. For one of these strands, $F$ lies on its left and $F'$ on the right; for the other $F$ lie on the right and $F'$ on the left. So every face is labeled by the same number of strands as every other.
We define this number to be the \newword{rank} of
the graph; it will eventually play the role of $k$.

%Our description of maximal weakly separated collections in terms of graphs is the following:
The following claim establishes a correspondence between maximal weakly separated collections in 
a positroid and reduced plabic graphs.  It describes maximal weakly separated collections as labeled
sets $\f(G)$ of reduced plabic graphs.

\begin{theorem} \label{thm:WSisPG}
%A collection $\C$ is a maximal weakly separated collection if and only if it is
%$\f(G)$ for a reduced plabic graph with strand permutation $[k+1, k+2, \ldots, n,
%1,2,\ldots, k]$. 
For a decorated permutation $\pi^:$ and the corresponding Grassmann necklace $\I= \I(\pi^:)$,
a collection  $\C$ is a maximal weakly separated collection
inside the positroid $\M_{\I}$ if and only if it has the form $\C=\f(G)$ for a reduced plabic graph
with strand permutation $\pi^:$.  

In particular, a maximal weakly separated collection $\C$ in $\binom{[n]}{k}$ has the form $\C = \f(G)$
for a reduced plabic graph $G$ with strand permutation $w^: = [k+1, k+2, \ldots, n, 1,2,\ldots, k]$. 
\end{theorem}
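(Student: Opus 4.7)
The plan is to prove both directions via the plabic tilings advertised in the introduction. The forward direction shows that $\f(G)$ is a weakly separated collection inside $\M_\I$; the reverse direction builds a reduced plabic graph from a maximal weakly separated collection and establishes the equality of sizes, from which maximality on both sides then follows.

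For the forward direction, let $G$ be a reduced plabic graph with strand permutation $\pi^:$. I would first invoke \cite{Post} for the inclusion $\f(G) \subseteq \M_\I$, which is one of the central features of reduced plabic graphs in that paper. The inclusion $\I \subseteq \f(G)$ follows by inspecting the face of $G$ adjacent to the boundary arc from $i-1$ to $i$: its label consists of exactly those strand indices $j$ such that $i$ lies to the left of strand $j$, and this coincides with $I_i$ under the bijection of Section~\ref{sec:Grneck}. Finally, two face labels $I$ and $J$ are weakly separated because the strands separating the two corresponding faces split naturally into two groups, labeling $I \setminus J$ and $J \setminus I$ respectively; reducedness condition~\ref{OppositeOrder} prevents two such strands (one from each group) from crossing each other between the two faces, which forces their boundary endpoints into a non-interleaving cyclic pattern and rules out any cyclic witness $a, b, a', b'$ to non-separation.

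For the converse, given a maximal weakly separated collection $\C$ inside $\M_\I$, the plan is to assemble a planar cell complex $T(\C)$, its plabic tiling, inside the disc whose vertices are labeled by $\C$, whose black $2$-cells correspond to local configurations $\{S, Sa, Sb, Sc, Sab, Sbc, Sac\} \subseteq \C$ with $a, b, c$ cyclic in $[n] \setminus S$, and whose white $2$-cells correspond to the dual black configurations. The planar dual of $T(\C)$, colored by cell color and with boundary vertices placed between consecutive boundary labels on $\partial D$, is the sought-after reduced plabic graph $G$: by construction $\f(G) = \C$, and strand-tracking through the tiling recovers $\pi_G^: = \pi^:$. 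The count $|\f(G)| = \ell(\pi^:)+1$ from \cite{Post} (independent of the reduced $G$ representing $\pi^:$) then forces $|\C| = \ell(\pi^:)+1$, and from this the maximality of $\f(G)$ in the forward direction is immediate by cardinality.

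The main obstacle is proving that, for maximal $\C$, the complex $T(\C)$ is a genuine tiling of the whole disc with no gaps or overlaps. This reduces to a combinatorial rigidity statement: roughly, that every local three-element configuration $\{Sa, Sb, Sc\} \subseteq \C$ with $a, b, c$ cyclic in $[n] \setminus S$ forces $\C$ to contain enough neighbors to complete a tile, so that the cells fit together consistently around every vertex. It is this rigidity that simultaneously drives the purity statement of Theorem~\ref{thm:main_intro} and the bijection of the present theorem. By comparison, translating the established properties of reduced plabic graphs in \cite{Post} into the language of weak separation in the forward direction is largely bookkeeping.
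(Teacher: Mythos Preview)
Your overall architecture matches the paper's: show $\f(G)$ is a weakly separated collection in $\M_\I$, then from a maximal $\C$ build a tiling, show it fills the disc, and take the planar dual. Your plan to deduce maximality of $\f(G)$ a posteriori by cardinality is logically sound and slightly more economical than the paper, which proves maximality of $\f(G)$ directly (Theorem~\ref{thrm:IsMaximal}) before tackling the converse.

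Two specifics of your outline, however, do not match what actually works. First, your description of the $2$-cells of the tiling is off: the sets in $\C$ all have cardinality $k$, so a configuration like $\{S, Sa, Sb, Sc, Sab, Sbc, Sac\}$ cannot lie in $\C$. The paper's $2$-cells are the \emph{cliques}: a white face $\W(K)=\{I\in\C : K\subset I\}$ for each $(k-1)$-element $K$, and a black face $\B(L)=\{I\in\C : I\subset L\}$ for each $(k+1)$-element $L$. The complex $\Sigma(\C)$ is then embedded in $\RR^2$ by $I\mapsto \sum_{i\in I} v_i$ for convex-position points $v_i$; injectivity of this map (Proposition~\ref{prop:embed}) and a winding-number argument (Proposition~\ref{Winding}) pinning $\pi(J)$ inside or outside the curve $\pi(\I)$ according to whether $J\in\M$ are what make ``no gaps or overlaps'' tractable.

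Second, the rigidity driving the ``no holes'' step is not the triple-based statement you propose. The paper's key Lemma~\ref{lem:key} is edge-based: if $Ex,Ey\in\C$ and $E\neq(x,y)$, then some $Ea$ with $a\in(x,y)\setminus E$, or some $Exy\setminus b$ with $b\in(y,x)\cap E$, is weakly separated from all of $\C$. This is precisely what lets you push any boundary edge of a hypothetical hole inward, contradicting maximality (Proposition~\ref{NoHoles}). Your formulation ``every $\{Sa,Sb,Sc\}$ forces enough neighbors to complete a tile'' would need to be made precise and is not obviously equivalent; the paper's lemma is already delicate, requiring an auxiliary total order on $\binom{[n]}{k}$ and a careful analysis of overlapping ``witnesses'' against the candidate sets.
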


%
%This labeling is very important, because we will later show that each maximal separated collection of $\binom{[n]}{k }$ can be given by $\f_{G}$ for a plabic graph corresponding to permutation $[k+1,\cdots,n,1,\cdots,k]$.

We define the boundary face at $i$ to be the face touching the part of the disk between boundary vertices $i-1$ and $i$ in clockwise order. Let $I_i$ be the label of that face. At boundary point $i$, the strand $i$ comes in and the strand $\pi(i)$ leaves. So $I_{i+1}$ is obtained from $I_i$ by deleting $i$ and adding in $\pi(i)$; we deduce that $(I_1,\cdots,I_n)$ is the Grassmann necklace $\I(\pi)$. 

We now describe how to see mutations in the context of plabic graphs. We have following 3 moves on plabic graphs. 

\medskip

(M1) Pick a square with vertices alternating in colors, such that all vertices have degree $3$. We can switch the colors of all the vertices. See Figure~\ref{fig:square_move}. 
\begin{figure}[htbp]
\centerline{\psfrag{l}[cc][cc]{$\longleftrightarrow$}
\includegraphics{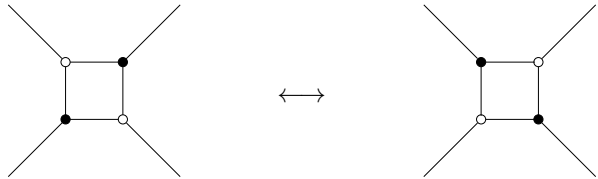}}
\caption{(M1) Square move}
\label{fig:square_move} \end{figure}

(M2) For two adjoint vertices of the same color, we can contract them into one vertex. See Figure~\ref{fig:edge_contraction}. 
\begin{figure}[htbp]
\centerline{\psfrag{l}[cc][cc]{$\longleftrightarrow$}
\includegraphics{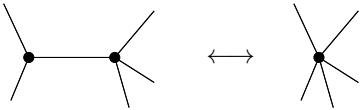}}
\caption{(M2) Unicolored edge contraction}
\label{fig:edge_contraction} \end{figure}

(M3) We can insert or remove a vertex inside any edge. See Figure~\ref{fig:vertex_removal}.
\begin{figure}[htbp]
\centerline{\psfrag{l}[cc][cc]{$\longleftrightarrow$}
\includegraphics{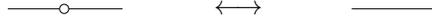}}
\caption{(M3) Vertex removal}
\label{fig:vertex_removal} \end{figure}

The moves do not change the associated decorated permutation of the plabic graph, and do not change whether or not the graph is reduced. The power of these moves is reflected in the next Theorem:

\begin{theorem} \cite[Theorem 13.4]{Post}
\label{thm:P}
 Let $G$ and $G'$ be two reduced plabic graphs with the same number of boundary vertices. Then the following claims are equivalent:
\begin{itemize}
\item $G$ can be obtained from $G'$ by moves (M1)-(M3).
\item These two graphs have the same decorated strand permutation $\pi_G^{:} = \pi_{G'}^{:}$.
\end{itemize}
\end{theorem}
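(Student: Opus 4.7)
The ``only if'' direction is a local verification that each move preserves strand topology. For (M3), inserting or deleting a degree-2 vertex obviously does not alter how strands enter and leave the edge. For (M2), contracting a monochromatic edge preserves the circular order of neighbors at the merged vertex, and since same-colored vertices cause strands to turn without crossing, the global pattern of strand connections is unchanged. For (M1), the key observation is that the four strands which enter the boundary of the square get reconnected identically in the two possible colorings: in both cases, strands entering opposite sides of the square get paired up the same way, since the overall ``left/right turn'' pattern along the square's boundary is invariant under swapping all four colors. So the decorated strand permutation is preserved.

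For the ``if'' direction, my plan is a two-step reduction. \textbf{Step 1 (normal form).} Using (M2) to contract monochromatic edges and (M3) to split higher-valence vertices (by introducing a temporary monochromatic edge and splitting), one can transform any reduced plabic graph into a \emph{trivalent bipartite} reduced plabic graph. So it suffices to prove that any two trivalent bipartite reduced plabic graphs with the same decorated strand permutation are related by the moves. \textbf{Step 2 (strand diagram as wiring diagram).} In a trivalent bipartite reduced plabic graph, each pair of strands crosses at most once (by conditions \ref{NoLoop} and \ref{OppositeOrder} of reducedness), and each crossing occurs at a unique edge of $G$. The resulting collection of $n$ strands, drawn in the disk with fixed boundary endpoints determined by $\pi$, is a \emph{reduced pseudoline arrangement} realizing the decorated permutation $\pi^{:}$. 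The number of crossings is the length of $\pi^:$.

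The move (M1) translates on the strand level to a \emph{Yang--Baxter / braid move}: the three strands threading through the square change the cyclic order in which they cross each other, exchanging one triangular region for its mirror. This, together with (M2) and (M3) (which do not change the pseudoline arrangement at all), generates all equivalences of reduced pseudoline arrangements with fixed endpoints. The content of the theorem therefore reduces to the statement that any two reduced pseudoline arrangements realizing the same boundary permutation are connected by braid moves, which is the pseudoline analog of Matsumoto's theorem on reduced words in a Coxeter group. Applied here with the appropriate (affine) symmetric group --- where the letters are the simple transpositions and the Coxeter relations correspond exactly to the braid/Yang--Baxter moves --- this yields the result.

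The main obstacle is Step 1 together with the precise correspondence in Step 2: one must show that the reducedness axioms for plabic graphs translate faithfully into the minimality condition for pseudoline arrangements, and conversely that any trivalent bipartite plabic graph arises from a pseudoline arrangement by ``thickening'' crossings into small edges in a color-consistent way. Verifying that (M1) is the only additional move needed beyond the trivial (M2)/(M3) moves, i.e.\ that each braid move on the pseudoline arrangement can be realized by a single (M1) (possibly after using (M2) and (M3) to rearrange the local neighborhood into the square configuration), is the most delicate combinatorial step, since the square move requires a very specific local configuration of vertex colors and valences to be applied.
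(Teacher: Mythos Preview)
First, note that this paper does not prove Theorem~\ref{thm:P} at all: it is quoted verbatim from \cite[Theorem~13.4]{Post} and used as a black box. So there is no proof here to compare yours against; what follows is an assessment of your sketch on its own merits.

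Your argument has a genuine gap in Step~2. The claim that ``each pair of strands crosses at most once'' in a reduced plabic graph is false. Condition~(\ref{OppositeOrder}) in the definition of reducedness does not forbid multiple crossings between two strands; it only requires that any two crossings occur in opposite orders along the two strands. That this actually happens is visible elsewhere in this very paper: the proof of Proposition~\ref{prop:plagws} explicitly treats the case where strands $a$ and $c$ cross many times, cutting the disk into regions $R$, $T$, $S_1,\dots,S_s$ with $s$ arbitrary. Since your reduction to pseudoline arrangements rests on the at-most-one-crossing property, the appeal to Matsumoto's theorem for reduced words does not go through as written.

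There is a second, related problem with your identification of (M1) as a Yang--Baxter/braid move. In the square configuration the four surrounding faces are $Sab$, $Sbc$, $Scd$, $Sda$ and the central face toggles between $Sac$ and $Sbd$; all four strands $a,b,c,d$ pass through the square, not three. So (M1) is not the $s_i s_{i+1} s_i = s_{i+1} s_i s_{i+1}$ relation on three pseudolines but a genuinely different local move. Postnikov's proof in \cite{Post} proceeds instead through an inductive ``bridge removal'' argument (compare Proposition~\ref{prop:Reduction} here), reducing the length of the decorated permutation one step at a time; the connection to Coxeter combinatorics is with the \emph{affine} symmetric group and is considerably more delicate than a direct translation to pseudoline arrangements.
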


Moves (M2) and (M3) do not change $\f(G)$, while move (M1) changes $\f(G)$ by a mutation. So, once we prove Theorem~\ref{thm:WSisPG}, we will establish that all maximal weakly separated collections within $\M(\I)$ are connected to each other by mutations. 

Notice also that the moves (M1)-(M3) do not change the number of faces of the plabic graph. So all reduced plabic graphs for a given decorated permutation have the same number of faces. 
This number was calculated in~\cite{Post}:

\begin{theorem} 
\label{thm:P2}
Let $G$ be a reduced plabic graph with decorated permutation $\pi^{:}$. Then $G$ has $\ell(\pi^{:}) + 1$ faces.
\end{theorem}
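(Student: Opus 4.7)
The plan is to exploit the invariance of reduced plabic graphs under moves (M1)--(M3): by Theorem~\ref{thm:P}, any two reduced plabic graphs with the same strand permutation $\pi^:$ are connected by such moves; each of them preserves the number of faces (for (M1) a four-cycle is replaced by another four-cycle, and for (M2)--(M3) no faces are created, destroyed, or merged). Hence the face count of a reduced plabic graph is a function of $\pi^:$ alone, and it suffices to exhibit one convenient representative $G_{\pi^:}$ for each $\pi^:$ and show that it has $\ell(\pi^:)+1$ faces.

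Next I would build $G_{\pi^:}$ directly from $\pi^:$ via a minimal strand arrangement. Place boundary vertices $1,\dots,n$ around a disc, and for each $i$ draw a smooth oriented strand from $i$ to $\pi(i)$ (a small loop at $i$, oriented counter-clockwise if $col(i)=+1$ and clockwise if $col(i)=-1$, when $\pi(i)=i$). Arrange the strands in general position so that two distinct strands cross transversely exactly once when the corresponding chords meet inside the disc (equivalently, when the pair is not an alignment of $\pi$), and not at all otherwise. At each transverse crossing, insert a bigon with one black and one white vertex, oriented so that the local rules ``turn right at black, turn left at white'' reproduce the original strand segments. The resulting bipartite planar graph $G_{\pi^:}$ has strand permutation $\pi^:$ by construction, and the reducedness conditions \ref{NoLoop}--\ref{OppositeOrder} are immediate from the general-position assumption.

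To count faces, I would apply Euler's formula $V-E+F=2$ to the planar graph obtained by superimposing $G_{\pi^:}$ on the boundary circle of the disc, subdivided at the $n$ boundary vertices. A routine vertex/edge/face accounting shows that the number of bounded regions of $G_{\pi^:}$ equals $c+1$, where $c$ is the total number of transverse crossings in the strand arrangement. The remaining combinatorial task is to establish the identity $c=k(n-k)-A(\pi)$, expressing the crossing count of the chord diagram in terms of alignments and the rank $k$ encoded in the decorated permutation.

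This last identity is the main obstacle. The raw count of intersecting chord pairs is naturally of the form $\binom{n}{2}-A(\pi)$ minus correction terms for pairs containing fixed points, and converting it into $k(n-k)-A(\pi)$ requires the fact that exactly $k$ indices $i\in[n]$ satisfy either $\pi^{-1}(i)>_i i$ or ($\pi(i)=i$ and $col(i)=-1$) --- a consequence of the bijection with Grassmann necklaces of rank $k$. Handling the two colors of fixed points, whose strand-loops enclose opposite sides of the disc, is the most delicate part of the bookkeeping, and is where the $\pm 1$ factors in the final count must be made to align.
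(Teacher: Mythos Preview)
Your opening move---using Theorem~\ref{thm:P} and the face-invariance of (M1)--(M3) to reduce to a single representative per $\pi^{:}$---is sound and is a genuinely different route from the paper. The paper does not argue combinatorially at all: it simply invokes two results from \cite{Post}, namely that $\ell(\pi^{:})$ equals the dimension of the positroid cell $S_{\M}^{tnn}$ and that this cell is homeomorphic to $\RR^{|F(G)|-1}$, so the face count drops out of the geometry.

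Your attempted direct count, however, breaks in step~2. The parenthetical ``(equivalently, when the pair is not an alignment of $\pi$)'' is false. For a pair $\{i,j\}$ with $i,\pi(i),j,\pi(j)$ all distinct there are \emph{three} possibilities, not two: the chords $(i,\pi(i))$ and $(j,\pi(j))$ may interlace (a ``crossing''), or they may be nested in the sense $i,\pi(i),\pi(j),j$ cyclically ordered (an alignment), or they may satisfy $i,\pi(i),j,\pi(j)$ cyclically ordered---this last case is neither a chord-crossing nor an alignment. Concretely, for $\pi=[2,1,4,3]$ with $(k,n)=(2,4)$ the pair $\{1,3\}$ is of this third type. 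Here $A(\pi)=2$, $\ell(\pi^{:})=2$, and any reduced plabic graph has three faces and exactly two strand crossings, yet the number of interlacing chord pairs is zero and the number of non-alignment pairs is four. So the arrangement you describe (strands crossing exactly when chords interlace) does not in general produce a reduced plabic graph with the correct $\pi^{:}$, and the ``raw count'' you propose to massage in the last paragraph is not $\binom{n}{2}-A(\pi)$ up to fixed-point corrections but differs by the number of these third-type pairs, which can be of order $n^2$.

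If you want a self-contained combinatorial proof along your lines, keep your first paragraph and replace the explicit construction by the induction implicit in Proposition~\ref{prop:Reduction}: for $\ell(\pi^{:})>0$ one can delete a removable edge to obtain a reduced plabic graph $G'$ whose decorated permutation $\mu^{:}$ has $\ell(\mu^{:})=\ell(\pi^{:})-1$; deleting an edge merges two faces, so $|F(G)|=|F(G')|+1=\ell(\mu^{:})+2=\ell(\pi^{:})+1$ by induction. The base case $\ell=0$ has a single face.
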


\begin{proof}
By~\cite[Proposition 17.10]{Post}, $\ell(\pi)$ is the dimension of $S_{\M}^{tnn}$, a manifold whose definition we do not need to know. By~\cite[Theorem 12.7]{Post}, $S_{\M}^{tnn}$ is isomorphic to $\RR^{|F(G)| - 1}$. 
\end{proof}

Thus, proving Theorem~\ref{thm:WSisPG} will establish all parts of~\ref{thm:GSConjecture}, thus proving the conjectures of Scott and of Leclerc and Zelevinsky.

\section{Consequences for cluster algebras} \label{sec:Clust}

This section is not cited in the rest of the paper.

From the beginnings of the theory of cluster algebras, it has been expected that the coordinate ring of the Grassmannian, in its Pl\"ucker embedding, would be a cluster algebra, and that the Pl\"ucker coordinates would be cluster variables. 
This was verified by Scott in~\cite{Scott2}.
Moreover, Scott showed that, given any reduced alternating strand diagram, the face labels of that diagram form a cluster in the cluster structure for $G(k,n)$.
With the tools of this paper, we can establish the converse statement.

\begin{theorem} \label{Cluster}
Let $\C$ be a subset of $\binom{[n]}{k}$. The following are equivalent:
\begin{enumerate}
\item The set of Pl\"ucker coordinates $\{ p_I \}_{I \in \C}$ is a cluster in the cluster algebra structure on the coordinate ring of $G(k,n)$.
\item $\C$ is a maximal weakly separated collection.
\item $\C$ is the collection of face labels of a reduced plabic graph for the uniform matroid $\binom{[n]}{k}$.
\end{enumerate}
\end{theorem}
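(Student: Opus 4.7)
The plan is to prove three implications giving the triple equivalence: $(2)\iff(3)$, $(3)\Rightarrow(1)$, and $(1)\Rightarrow(2)$. The equivalence $(2)\iff(3)$ is the specialization of Theorem~\ref{thm:WSisPG} to the uniform matroid $\M=\binom{[n]}{k}$, whose Grassmann necklace is $I_i=\{i,i+1,\ldots,i+k-1\}$ and whose decorated permutation is $w^:=[k+1,k+2,\ldots,n,1,\ldots,k]$. Note that in this case each cyclic interval $I_i$ is weakly separated from every $k$-subset (the chord separating $I\setminus J$ from $J\setminus I$ always exists), so $\I\subseteq\C$ is automatic for any maximal weakly separated $\C$. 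The implication $(3)\Rightarrow(1)$ is Scott's theorem from~\cite{Scott2}: the face labels of a reduced plabic graph form a cluster of Pl\"ucker coordinates in $\CC[G(k,n)]$.

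For $(1)\Rightarrow(2)$, the first observation is that the cluster algebra $\CC[G(k,n)]$ has rank $k(n-k)+1$, so any cluster $\{p_I\}_{I\in\C}$ satisfies $|\C|=k(n-k)+1$, which by Theorem~\ref{SConjecture} is the size of every maximal weakly separated collection; hence maximality will follow once we prove $\C$ is weakly separated. The key structural input is that, in the Pl\"ucker ideal of $\CC[G(k,n)]$, the only quadratic binomial relations of the form $p_I p_{I'}=p_A p_B + p_C p_D$ are (up to scale) the short three-term Pl\"ucker relations $p_{Sac}p_{Sbd}=p_{Sab}p_{Scd}+p_{Sad}p_{Sbc}$ with cyclically ordered $a,b,c,d\notin S$. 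Therefore, any cluster mutation that stays inside the subfamily of Pl\"ucker-coordinate clusters is forced to be exactly a weak separation mutation in the sense of Proposition~\ref{mutation}, with all four auxiliary sets $Sab, Sbc, Scd, Sad$ already present in $\C$. Combined with Theorem~\ref{thm:mutations_intro}, this shows that the clusters arising from plabic graphs form a single orbit under Pl\"ucker-preserving mutations, closed inside the cluster exchange graph.

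The remaining and principal obstacle is to rule out any cluster of Pl\"ucker coordinates outside this plabic orbit: we must show that if $p_I$ and $p_J$ coexist in any cluster, then $I\parallel J$. The first approach I would try is via the cluster-algebraic notion of compatibility degree: identify $[p_I:p_J]$ with a combinatorial invariant that vanishes precisely when $I$ and $J$ are weakly separated, using the plabic tiling machinery developed earlier in the paper to supply the necessary dictionary. A second, more direct route is via the Pl\"ucker ideal itself: assuming a cluster $\{p_I\}_{I\in\C}$ contains a non-weakly-separated pair, one uses the failure of the chord separation criterion for $I,J$ to produce a Pl\"ucker relation that exhibits a nontrivial algebraic dependence among $\{p_I\}_{I\in\C}$, contradicting that a cluster is a transcendence basis for the function field of $G(k,n)$. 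Either argument closes the loop and completes the equivalence.
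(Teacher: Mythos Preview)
Your handling of $(2)\iff(3)$ and $(3)\Rightarrow(1)$ matches the paper exactly. The gap is in $(1)\Rightarrow(2)$, where both of your proposed routes are incomplete and the paper does something entirely different.

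Your first approach, via compatibility degree, is a program rather than a proof: you would need an independent identification of $[p_I:p_J]$ with a combinatorial quantity that vanishes exactly when $I\parallel J$, and nothing in the plabic tiling material of this paper supplies that. Your second approach has a concrete error. If $I\not\parallel J$, a Pl\"ucker relation certainly expresses $p_I p_J$ as a signed sum of products $p_A p_B$, but there is no reason the sets $A,B$ appearing on the right-hand side lie in the cluster $\C$; hence no algebraic dependence \emph{among the elements of $\C$} is produced, and the transcendence-basis argument does not go through. The same problem undermines your earlier paragraph: even granting (which you do not justify) that the only three-term quadratic trinomials in the Pl\"ucker ideal are the short ones, you cannot conclude that a mutation out of a Pl\"ucker cluster that happens to land in another Pl\"ucker cluster must be a square move---the exchange polynomial in a general cluster algebra need not be a binomial, and its monomials need not be single Pl\"ucker coordinates.

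The paper's argument for $(1)\Rightarrow(2)$ avoids all of this by passing to the quantum deformation. Berenstein--Zelevinsky~\cite{QA} show that variables in a common cluster quasi-commute in the quantum cluster algebra $\CC_q(G(k,n))$; Geiss--Leclerc--Schr\"oer~\cite{GLS} identify this with the ring of quantum minors; and Scott~\cite{Scott} (following Leclerc--Zelevinsky) proved that two quantum minors $\Delta(I),\Delta(J)$ quasi-commute only if $I\parallel J$. This immediately gives that $\C$ is weakly separated, and then your cardinality observation $|\C|=k(n-k)+1$ finishes maximality. The point is that quasi-commutation is a \emph{pairwise} condition on cluster variables, which is exactly what is needed, whereas your proposed arguments try to extract pairwise information from global features (the exchange graph, transcendence degree) that do not see it directly.
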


In this section, we use the language of cluster algebras freely.

The implication $(2) \implies (3)$ is the main result of this paper; the implication $(3) \implies (1)$ is~\cite[Theorem 2]{Scott2}. We now show  $(1) \implies (3)$.

Let $I$ and $J \in \C$; we will show that $I \parallel J$.
Let $\mathbb{C}_q(G(k,n))$ be the quantization of the coordinate ring of $G(k,n)$ introduced in~\cite{QA}; since $p_I$ and $p_J$ are in a common cluster, they quasi-commute in this ring.
By~\cite{GLS}, the ring $\mathbb{C}_q(G(k,n))$ is isomorphic to the ring of ``quantum minors" of a $k \times (n-k)$ matrix (see~\cite{Scott} or~\cite{LZ}), so the quantum minors $\Delta(I)$ and $\Delta(J)$ quasi-commute.
As computed in~\cite{Scott}, if $\Delta(I)$ and $\Delta(J)$ quasi-commute, then $I$ and $J$ are weakly separated.

So we know that $\C$ is a weakly separated collection.
Every cluster has cardinality $1+k(n-k)$, so $\C$ is a maximal weakly separated collection by the bound of~\cite{Scott}. \qedsymbol

We expect that an analogous statement holds for all positroids.
We have not proved it here for two reasons.
The first is that, although positroid varieties are widely expected to have a cluster structure, this has not yet been verified in print.
The second is that, since the coordinate ring of a positroid variety is a quotient of the coordinate ring of $G(k,n)$, it is possible that two minors which do not quasi-commute on $G(k,n)$ do quasi-commute when restricted to this smaller subvariety. 
Deriving the analogues of~\cite{Scott} and~\cite{LZ} for positroid varieties strikes us as an excellent project.

\section{Plabic graphs and weakly separated sets}
\label{sec:PGraphs}

Our goal in this section is to show that, for any reduced plabic graph $G$ with associated decorated permutation $\pi^:$, the collection $\f(G)$ is a weakly separated collection for $\M_{\I(\pi^:)}$.
This is the easy part of Theorem~\ref{thm:WSisPG}. We will show that this collection is maximal in section~\ref{sec:tilings}, and that any maximal weakly separated collection is of the form $\f(G)$ in section~\ref{sec:final}.

\begin{proposition}\label{prop:plagws}
Let $G$ be a reduced plabic graph. Then $\f(G)$ is a weakly separated collection.
\end{proposition}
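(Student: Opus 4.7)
The plan is to argue by contradiction. Suppose that two faces $F$ and $F'$ of $G$ have labels $I = \f(F)$ and $J = \f(F')$ that are \emph{not} weakly separated. Then there exist cyclically ordered $a, b, a', b' \in [n]$ with $a, a' \in I \setminus J$ and $b, b' \in J \setminus I$. By the definition of face labels, strands $a$ and $a'$ each have $F$ on their left and $F'$ on their right, while strands $b$ and $b'$ each have $F$ on their right and $F'$ on their left. By reducedness condition~(2), each of these four strands is either a simple arc in the disk from one boundary vertex to another, or a loop at a boundary vertex; the loop case is essentially degenerate and can be handled separately.

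The strategy will be to produce a chord from boundary to boundary---a path in the disk connecting two boundary points---that separates the ``ending endpoints'' of the $I \setminus J$ strands from those of the $J \setminus I$ strands on the boundary circle, contradicting the cyclic interleaving $a, b, a', b'$. To construct this chord, I would pick a simple continuous path $\gamma$ from a point in $F$ to a point in $F'$, in general position with respect to the strand arrangement. For each strand $s$ the signed crossing number of $\gamma$ with $s$ is $+1$ if $s \in I \setminus J$ (since $\gamma$ starts in the left region and ends in the right region of $s$), $-1$ if $s \in J \setminus I$, and $0$ otherwise. Extending $\gamma$ to a boundary-to-boundary path $\gamma^+$ and tracking the signed crossings, one deduces that the ending endpoint of each strand $s \in I \setminus J$ lies on one specific arc of the boundary cut by $\gamma^+$, while that of each $s \in J \setminus I$ lies on the opposite arc. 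This directly contradicts the assumed cyclic order $a, b, a', b'$.

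The main obstacle is to carry out the extension of $\gamma$ and the parity/orientation analysis rigorously when $F$ or $F'$ is an interior face: the extensions from each endpoint of $\gamma$ out to the boundary will generically cross additional strands, so one must use the plabic graph structure to ensure that these extra crossings do not invalidate the parity or orientation conclusions. Here the full strength of the reducedness conditions enters---condition~(3), that any two strands share at most two common crossings and if two then in opposite orders, allows one to homotope $\gamma$ so that it crosses each strand at most once, simplifying the parity bookkeeping. An alternative, perhaps cleaner strategy that avoids global extensions is to focus on the topological configuration of just the four strands $a, a', b, b'$ themselves: their separation constraints with respect to $F$ and $F'$, together with the cyclic order of their boundary endpoints and condition~(3) on pairwise crossings, should force an impossibility, extracted via a case analysis of where the starting endpoints $\pi^{-1}(a), \pi^{-1}(a'), \pi^{-1}(b), \pi^{-1}(b')$ may lie on the boundary.
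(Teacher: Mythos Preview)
Your proposal sketches two strategies without completing either, and the first has a genuine gap while the second is essentially the paper's route.

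For the first strategy: the signed crossing number of $\gamma$ (from $F$ to $F'$) with strand $s$ is indeed $[s\in I]-[s\in J]$, but once you extend to a boundary-to-boundary path $\gamma^+$ with endpoints in boundary regions $I_p$ and $I_q$, the signed crossing number becomes $[s\in I_p]-[s\in I_q]$, which has nothing to do with $I$ and $J$ anymore. So the claim that the ending vertex of each $s\in I\setminus J$ lands on a fixed arc cut out by $\gamma^+$ does not follow; the extension segments introduce exactly the extra crossings you worried about, and ``homotoping so that each strand is crossed at most once'' does not fix this, since a path from an interior face to the boundary may be forced to cross a given strand. In short, the chord you produce separates $I_p\setminus I_q$ from $I_q\setminus I_p$, not $I\setminus J$ from $J\setminus I$.

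Your second alternative---analyzing the configuration of the four strands $a,a',b,b'$ directly---is exactly what the paper does, but the work lies in the case analysis you omit. The paper splits on whether strands $a$ and $a'$ cross. If they do not, they are parallel (both have $F$ on the left and $F'$ on the right), which pins down the cyclic position of $\pi^{-1}(a)$ and $\pi^{-1}(a')$; one then argues that any placement of $\pi^{-1}(b)$ forces strand $b$ to miss either $a$ or $a'$, putting $F$ and $F'$ on the same side of $b$. If $a$ and $a'$ do cross, they carve the disk into a left region $R$, a right region $T$, and a chain of middle regions $S_1,\dots,S_s$; the faces $F$ and $F'$ sit in distinct $S_i$'s, and one tracks how strand $b'$ threads through $R$, $T$, and the $S_i$'s. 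Reducedness condition~(3), applied to the pairs $(a,b')$ and $(a',b')$, forbids $b'$ from passing $R\to S\to T$ and also $T\to S\to R$, and this, together with the boundary position of the terminal vertex $b'$, yields the contradiction. You should carry out this analysis rather than defer it.
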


\begin{proof}
Assume $\f(G)$ is not weakly separated. Pick $I$ and $J \in \f(G)$ such that $I \not \parallel J$. Let $a$, $b$, $c$ and $d$ be cyclically ordered elements such that $\{a,c\} \subset I \setminus J$ and $\{b,d\} \subset J \setminus I$. 

We first consider the case where strands $a$ and $c$ don't cross (see Figure~\ref{DontCross}).
In this case, region $I$ is to the left of strands $a$ and $c$, and region $J$ is to their right, so strands $a$ and $c$ must be parallel, not antiparallel. 
In other words, the endpoints $( \pi^{-1}(a), a, b, c, \pi^{-1}(c))$ are circularly ordered.
If $\pi^{-1}(b)$ is in $(b, \pi^{-1}(a))$, then strands $a$ and $b$ cannot cross and $I$ is on the left side of $b$, contrary to our desires.
If $\pi^{-1}(b)$ is in $(\pi^{-1}(c), b)$ then strand $b$ cannot cross strand $c$ and we deduce that $J$ is to the right of strand $b$, again contrary to our desires.  
But the intervals $(b, \pi^{-1}(a))$ and $(\pi^{-1}(a), c)$ cover all of $[n]$, so this excludes all possible positions for $\pi^{-1}(b)$ and we have a contradiction.
This concludes the proof in the case that strands $a$ and $c$ do not cross.

\begin{figure}
\centerline{
\psfrag{I}[cc][cc]{\Huge $I$}
\psfrag{J}[cc][cc]{\Huge $J$}
\psfrag{a}[cc][cc]{\Huge $a$}
\psfrag{c}[cc][cc]{\Huge $c$}
\scalebox{0.5}{\includegraphics{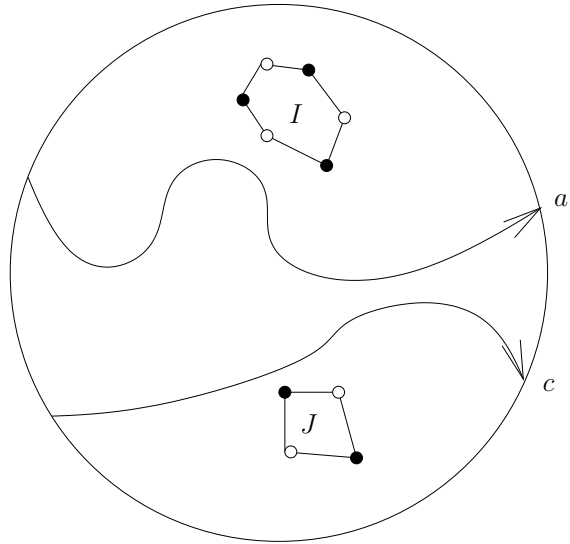}}}
\caption{The case where strands $a$ and $c$ don't cross} \label{DontCross}
\end{figure}

Now, suppose that strands $a$ and $c$ cross (see Figure~\ref{Lemma71}).
 Let $R$ be the region which is to the left of $a$ and right of $c$; let $T$ be to the right of $a$ and left of $c$, and let $S_1$, $S_2$, \dots, $S_s$ be the regions which are both to the left of $a$ and $c$ or both to the right. Our numbering is such that strand $a$ first passes by $S_1$ and precedes in increasing order, while $c$ starts at $S_s$ and goes in decreasing order. Let face $I$ be in $S_i$ and $J$ be in $S_j$; note that $i \neq j$. We will discuss the case that $i < j$; the case that $i>j$ is equivalent by relabeling $(a,b,c,d)$ as $(c,d,a,b)$.

\begin{figure}
\centerline{
\psfrag{I}[cc][cc]{\Huge $I$}
\psfrag{J}[cc][cc]{\Huge $J$}
\psfrag{R}[cc][cc]{\Huge $R$}
\psfrag{S1}[cc][cc]{\Huge $S_1$}
\psfrag{S2}[cc][cc]{\Huge $S_2$}
\psfrag{S3}[cc][cc]{\Huge $S_3$}
\psfrag{S4}[cc][cc]{\Huge $S_4$}
\psfrag{T}[cc][cc]{\Huge $T$}
\psfrag{a}[cc][cc]{\Huge $a$}
\psfrag{b}[cc][cc]{\Huge $b$}
\psfrag{c}[cc][cc]{\Huge $c$}
\psfrag{d}[cc][cc]{\Huge $d$}
\psfrag{Q}[cc][cc]{\Huge \mbox{strand $d$}}
\scalebox{0.5}{\includegraphics{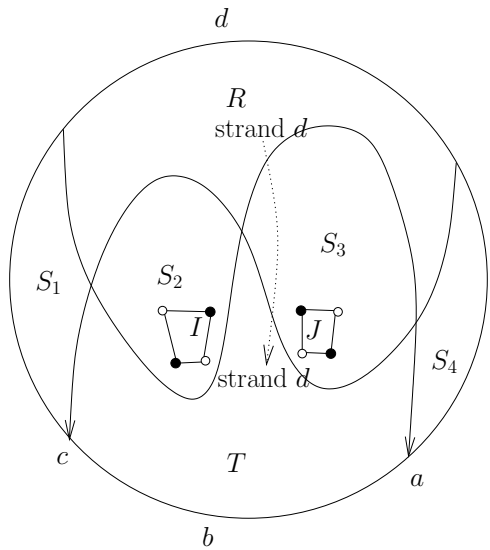}}}
\caption{The case where strands $a$ and $c$ do cross} \label{Lemma71}
\end{figure}

Consider strand $d$, as it passes through the various regions $R$, $S_1$, \dots, $S_s$ and $T$. (In Figure~\ref{Lemma71}, a portion of strand $d$ is shown as a dotted line.) We claim that it is impossible that strand $d$ both passes from $R$ to $S_e$ to $T$ and from $T$ to $S_f$ to $R$. The proof is simple: If it did, either its crossings with strand $a$ or its crossings with $c$ would violate condition~\ref{OppositeOrder} in the definition of reducedness. On the other hand, strand $d$ must enter both $R$ and $T$, as otherwise regions $I$ and $J$ would lie on the same side of strand $d$. Let $m$ be the index such that strand $d$ travels from $R$ to $S_m$ to $T$, or else from $T$ to $S_m$ to $R$. We must have $i \leq m \leq j$, since $J$ and $I$ are opposite sides of $d$, and $d$ must travel from $R$ to $S_m$ to $T$, as $I$ is on the right of $d$ and $J$ on the left.  

But the boundary point $d$, where strand $d$ terminates, is in the cyclic interval $(c,a)$. So $d$ cannot end in $T$. It also cannot end in $R$, as it goes from $R$ to $S_m$ to $T$. 
So it must end in $S_1$ or $S_s$. We discuss the former case, since the latter case is similar. 
If $d$ ends in $S_1$, it must come from $T$, passing through $a$. But then the intersections of $c$ and $d$ violate condition~\ref{OppositeOrder}.
\end{proof}

We now use Proposition~\ref{prop:plagws}, together with Theorem~\ref{thm:P}, to demonstrate some facts about the face labels of plabic graphs. %The reader should consult Figure~\ref{M1} while reading propositions~\ref{prop:DistinctLabels} and~\ref{prop:LabelsInM}. 
%Note that the strands in this figure are labeled where they exit the edges of the figure.

\begin{figure}
\centerline{\scalebox{0.5}{
\psfrag{Sab}{\Huge $Sab$}
\psfrag{Sbc}{\Huge $Sbc$}
\psfrag{Scd}{\Huge $Scd$}
\psfrag{Sad}{\Huge $Sad$}
\psfrag{Sac}{\Huge $Sac$}
\psfrag{Sbd}{\LARGE $Sbd$}
\psfrag{a}{\Huge $a$}
\psfrag{b}{\Huge $b$}
\psfrag{c}{\Huge $c$}
\psfrag{d}{\Huge $d$}
\includegraphics{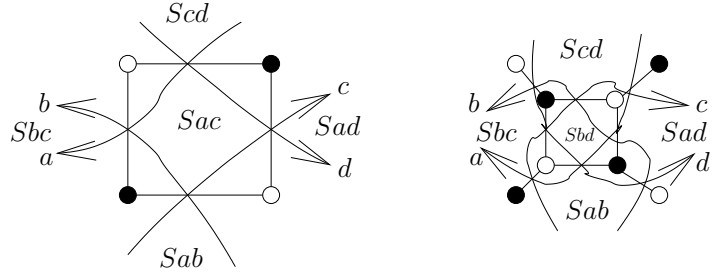}}}
\caption{The effect of (M1) on face labels} \label{M1}
\end{figure}

Now we want to show that $\F(G)$ is actually a weakly separated collection of $\M_{\I(\pi^:)}$. We need the following proposition.

\begin{prop} \label{prop:Reduction}
Let $G$ be a reduced plabic graph with decorated permutation $\pi^:$ such that $\ell(\pi^:)>0$. There is some pair of strands $(i,j)$ in $\pi^{:}$ which only crosses once, at an edge $e$ of $G$, and such that, if we delete the edge $e$, thus uncrossing the strands, the resulting graph $G'$ is still reduced.
\end{prop}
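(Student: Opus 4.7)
The plan is to find an edge $e$ of $G$ whose removal implements the desired ``uncrossing'' move, and then verify that the resulting graph $G'$ satisfies the three reducedness axioms. The first step is to produce at least one pair of strands in $G$ that cross exactly once. Since $\ell(\pi^:) > 0$, Theorem~\ref{thm:P2} gives at least two faces and hence at least one crossing in $G$. By reducedness condition~(3), each pair of strands crosses at most twice, and if twice then in opposite orders, so their two crossings bound a ``lens'' region. A descent argument reduces to a pair crossing exactly once: if a pair $(\alpha,\beta)$ crosses twice and bounds a lens $L$, then any strand $\gamma$ entering $L$ either produces a strictly smaller sub-lens with $\alpha$ or $\beta$ (by recursion), or enters and exits $L$ through different sides, in which case at least one of the pairs $(\gamma,\alpha)$ or $(\gamma,\beta)$ crosses exactly once.

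Next I would choose such a once-crossing pair $(i,j)$ extremally. The unique crossing $e$ together with strands $i$ and $j$ divide the disk into four regions, with boundary arcs between the four interleaved endpoints $\pi^{-1}(i), \pi^{-1}(j), i, j$. I would choose $(i,j)$ so that one of these boundary arcs contains no other strand endpoint. Such an outermost pair exists by further descent: if all four arcs contain other endpoints, any such endpoint can be used to produce a new once-crossing pair whose corresponding minimal arc is strictly shorter (in the number of boundary endpoints it contains).

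Finally, let $G' := G \setminus \{e\}$ and let $i'$, $j'$ be the new strands obtained by uncrossing at $e$. Since $i$ and $j$ meet only at $e$ in $G$, the new strands $i'$ and $j'$ are simple paths from the boundary to itself, so conditions~(1) and~(2) of reducedness for $G'$ follow immediately. For condition~(3), I would analyze how the crossings of any third strand $k$ with $\{i, j\}$ in $G$ redistribute as crossings with $\{i', j'\}$ in $G'$. The extremality established in the previous step ensures that each such $k$ has its crossings with $\{i, j\}$ placed in positions that preserve the opposite-order condition in $G'$.

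The main obstacle will be the last step: rigorously leveraging the extremality to verify condition~(3) for every other strand $k$. One must rule out configurations where $k$ crosses one of the new strands $i'$ or $j'$ more than twice, or twice in the wrong order; this requires a careful planar case analysis of how $k$ interacts with the paths of $i$ and $j$ near the crossing $e$, using that each $(k,i)$ and $(k,j)$ crossing in $G$ satisfies the opposite-order condition and that no third strand endpoint lurks in the extremal arc chosen above.
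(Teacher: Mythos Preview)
The paper's proof is entirely different from yours: it is a two-line citation of~\cite{Post}. There, a \emph{removable edge} is defined, and \cite[Lemma~18.9]{Post} shows that a removable edge has exactly the properties claimed here; \cite[Corollary~18.10]{Post} puts the removable edges of $G$ in bijection with the decorated permutations covered by $\pi^{:}$, and since $\ell(\pi^{:})>0$ there is at least one such cover. No direct analysis of strands in $G$ is carried out at all.

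Your direct approach has real gaps beyond the one you flag. In step~1, condition~(3) does \emph{not} say strands cross at most twice: it says any two of their crossings occur in opposite orders along the two strands, which is consistent with arbitrarily many crossings (all in reversed order). Your lens-descent also leaks: when $\gamma$ enters and exits $L$ through different sides it does not follow that one of $(\gamma,\alpha)$, $(\gamma,\beta)$ crosses exactly once, since $\gamma$ may cross $\alpha$ or $\beta$ again outside~$L$. In step~2 the descent is asserted but not supplied: a boundary vertex $m$ lying in one of the four arcs gives no mechanism for producing a new once-crossing pair with a strictly shorter minimal arc --- the strand through $m$ need not cross $i$ or $j$ at all. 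Most seriously, the extremality you aim for (one empty boundary arc) is not the condition that makes $G'$ reduced. After uncrossing, the new strand $i'$ is the concatenation of the first half of $i$ with the second half of $j$, so a third strand $k$ inherits crossings with $i'$ from both halves; nothing in ``one empty arc'' controls the relative order of these along $k$, which is exactly what condition~(3) for $G'$ demands. The correct hypothesis, which your outline is groping toward, is precisely the removability of~\cite{Post}, and its characterization there is tied to the partial order on decorated permutations rather than to any local geometric extremality of the kind you propose.
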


\begin{proof}
According to~\cite[Lemma 18.9]{Post}, a \emph{removable} edge has these properties. (See~\cite{Post} for definitions.) According to~\cite[Corollary 18.10]{Post}, the removable edges of $G$ are in bijection with the decorated permutations covered by $\pi^{\colon}$. Since $\ell(\pi^{\colon})>1$, it covers at least one decorated permutation.
\end{proof}

\begin{prop} 
\label{prop:PartConverse}

If $G$ is a reduced plabic graph with decorated permutation $\pi^:$, then the following properties hold:
\begin{enumerate}

\item The boundary cells of $G$ are labeled by $\I(\pi^:)$.
\item Every face of $G$ receives a separate label in $\F(G)$.
\item $\F(G)$ is contained in $\M_{\I(\pi^:)}$.
\end{enumerate}
\end{prop}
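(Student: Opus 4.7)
I will prove (1) by a direct local argument at boundary vertices, and (2) and (3) jointly by induction on $\ell(\pi^{:})$ using Proposition~\ref{prop:Reduction}.

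\emph{Proof of (1).} Fix a boundary vertex $i$. By construction, strand $i$ terminates at $i$ and strand $\pi(i)$ originates at $i$. The adjacent boundary faces $F_i$ and $F_{i+1}$ are separated only by the short arc through vertex $i$; crossing this arc flips sides with respect to strand $i$ (whose left region contains $F_i$ but not $F_{i+1}$) and strand $\pi(i)$ (whose left region contains $F_{i+1}$ but not $F_i$), while leaving sides with respect to all other strands unchanged. Hence $I_{i+1}=(I_i\setminus\{i\})\cup\{\pi(i)\}$. For fixed points $\pi(i)=i$, there is a simple loop at $i$ and $F_i=F_{i+1}$; the orientation of the loop encodes $col(i)$ and determines whether $i\in I_i$. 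This is precisely the rule defining the Grassmann necklace $\I(\pi^{:})$ from the decorated permutation, so $(I_1,\dots,I_n)=\I(\pi^{:})$.

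\emph{Proof of (2) and (3).} Induct on $\ell(\pi^{:})$. In the base case $\ell=0$, Theorem~\ref{thm:P2} gives $G$ a single face, whose label is the common value $I_1=\cdots=I_n$, which is the unique element of $\M_\I$; both (2) and (3) are immediate.

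For the inductive step ($\ell\geq 1$), Proposition~\ref{prop:Reduction} produces a removable edge $e$ at which two strands $a$ and $b$ cross exactly once. Removing $e$ yields a reduced plabic graph $G'$ with decorated permutation $\pi'^{:}=(a\,b)\pi^{:}$ and $\ell(\pi'^{:})=\ell-1$; denote the corresponding Grassmann necklace by $\I'$. By the inductive hypothesis, $\F(G')$ consists of $\ell$ distinct subsets, all contained in $\M_{\I'}$. I then track how the label structure changes under the uncrossing. The two faces $F_1, F_2$ of $G$ adjacent to $e$ have labels of the form $S\cup\{a\}$ and $S\cup\{b\}$ for some common $(k-1)$-set $S$; these merge in $G'$ into a single face. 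The other faces of $G$ correspond bijectively to the remaining faces of $G'$, and their labels either coincide with the $G'$-labels or differ by swapping $a\leftrightarrow b$ in the region bounded by the old and new paths of strands $a$ and $b$. Combining this description with the IH for $G'$ and the relation $\pi'^{:}=(a\,b)\pi^{:}$ between the decorated permutations, one deduces $|\F(G)|=\ell+1$ distinct labels (proving (2)) and $\F(G)\subseteq\M_\I$ (proving (3)).

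The main obstacle is the careful local analysis in the inductive step: precisely identifying the region of changed labels under the uncrossing, and verifying that the induced label changes are compatible both with the distinctness condition and with membership in $\M_\I$. This requires working out the strand rerouting near the removed edge in detail, using the fact that only the strands $a$ and $b$ (which uniquely crossed at $e$) are rerouted while all other strands remain unchanged.
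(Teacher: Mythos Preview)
Your treatment of (1) and (2) is essentially the paper's. The direct boundary argument for (1) is exactly the observation the paper makes just before the proposition (the paper also re-derives it inductively inside the proof, but your direct version is fine and arguably cleaner). For (2), the paper's inductive argument is the same as yours in spirit but sharper in execution: if two faces of $G$ carried the same label, they would in particular agree on their $\{a,b\}$-content, hence lie in the same region relative to strands $a$ and $b$ (which cross only at $e$); therefore their labels in $G'$ would also coincide, contradicting the inductive hypothesis. You should state it this way rather than via an unspecified ``tracking'' of label changes.

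For (3) there is a genuine gap, and it is precisely where your plan diverges from the paper. The paper does \emph{not} prove (3) by induction; it gives a direct strand argument: assuming some $J\in\F(G)$ fails $J\geq_i I_i$, one locates the first position $t$ where $j_t<_i h_t$, observes $j_t\notin I_i$ so $\pi^{-1}(j_t)\leq_i j_t$, and then, comparing strand $j_t$ with each strand $h_q$ for $q\geq t$, forces $h_q\in J$ for all such $q$, giving $|J|\geq k+1$. Your inductive route instead requires showing that the $a\leftrightarrow b$ relabeling carries $\M_{\I'}$-membership to $\M_\I$-membership. Concretely, you would need: from $I'_r\leq_r J'$ for all $r$, deduce $I_r\leq_r J$ for all $r$, where $I_r$ and $J$ are obtained from $I'_r$ and $J'$ by swapping $a\leftrightarrow b$ in \emph{possibly different} regions. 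This is not a formal consequence: a face may have $J=J'$ while some $I'_r$ changes to $I_r\neq I'_r$, and then $I_r\leq_r J'$ does not follow from $I'_r\leq_r J'$. You flag this as ``the main obstacle'' but give no mechanism to resolve it; the case analysis needed (depending on the cyclic positions of $a,b,r$ and on which of $a,b$ lie in $J'$ and in $I'_r$) is substantial and is exactly what the paper's direct argument avoids.
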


\begin{proof}
We are going to use induction on $\ell(\pi^:)$. When $\ell(\pi^:)=0$, any reduced plabic graph has only one face due to Theorem~\ref{thm:P2}. This is possible only when $\pi$ is the identity. Then there is only one face, and it has the label $I_1 = I_2 = \cdots = I_n$. This covers the base case for our induction argument.

Let $i$, $j$, $e$ and $G'$ be as in Proposition~\ref{prop:Reduction}.
By induction hypothesis, all the properties hold for $G'$. We will use $\mu^{:}$ to denote the decorated permutation of $G'$ and use $\I'=(I_1',\cdots,I_n')$ for the Grassmann necklace of $G'$. %The strands $i$ and $j$ divide $G$ into $4$ regions, and we denote them by $R_{i,j},R_{j,\pi^{-1}(i)},R_{\pi^{-1}(i),\pi^{-1}(j)},R_{\pi^{-1}(j),i}$.

We start with the first property. Swapping $i$ and $j$ going from $\mu^:$ to $\pi^:$ corresponds to changing $\I'$ to $\I$ in a way that, if some ${I'}_k$ contains only one of $i$ and $j$, than it gets swapped with the other. When going from $G'$ to $G$, the label of a boundary face gets changed only when it contains exactly one of $i$ and $j$, and the change is by swapping one with the other. So the boundary cells of $G$ are labeled by $\I(\pi^:)$.

Now we check the second property. Since $i$ and $j$ only cross once, if two faces are assigned the same label in $G$, that means they have to be in the same region with respect to strands $i$ and $j$. But this also means they are assigned the same label in $G'$, a contradiction.

We prove the third property directly. Assume for the sake of contradiction that we have some $J \in \F(G)$ such that $J \not \geq_i I_i$. For $J = \{j_1,\cdots,j_k\}$ and $I_i = \{h_1,\cdots,h_k\}$, let $t$ denote the first position such that $j_t <_i h_t$. From this, we get $j_t \not \in I_i$, and hence $\pi^{-1}(j_t) \leq_i j_t$. And for any $q \in [t,k]$, we have $\pi^{-1}(h_t) \geq_i h_t$, and it follows from comparing the strands $j_t$ and $h_q$ that $J$ should contain $h_q$. But this implies that $|J| = k+1$, a contradiction.

%We will now check the second property. Assume for the sake of contradiction that some $J$ and $H$ in $\F(G)$ are not weakly separated. Let $J'$ and $H'$ denote the labels of the corresponding faces in $G'$. Then we should not have the case when $J \setminus H = J' \setminus H'$ and $H \setminus J = H' \setminus J'$. So the two faces should be in different regions among $R_{i,j},R_{j,\pi^{-1}(i)},R_{\pi^{-1}(i),\pi^{-1}(j)},R_{\pi^{-1}(j),i}$, since $\F(G')$ is a weakly separated collection. 

\end{proof}

Combining the first and third property of Proposition~\ref{prop:PartConverse} with Proposition~\ref{prop:plagws}, we achieve the following claim:

\begin{corollary} \label{cor:PartConverse}
If $G$ is a reduced plabic graph with boundary $\I$, then $\f(G)$ is a weakly spearated collection in $\M_{\I}$.
\end{corollary}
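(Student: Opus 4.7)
The corollary is a direct synthesis of the three results that immediately precede it, so the plan is simply to assemble the pieces and invoke the relevant definition. Recall that a weakly separated collection in $\M_{\I}$ is, by definition, a weakly separated collection $\C$ satisfying $\I \subseteq \C \subseteq \M_{\I}$. We need to verify each of these three conditions for $\C = \f(G)$.

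First, I would cite Proposition~\ref{prop:plagws} to conclude that $\f(G)$ is weakly separated (as a subset of $\binom{[n]}{k}$, with $k$ being the common rank of the face labels). Second, I would invoke the first item of Proposition~\ref{prop:PartConverse}, which says that the boundary cells of $G$ are labeled by precisely the elements of $\I$; in particular, $\I \subseteq \f(G)$. Third, the third item of Proposition~\ref{prop:PartConverse} gives $\f(G) \subseteq \M_{\I}$. Combining these three facts gives exactly the statement of the corollary.

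There is no real obstacle: the whole content of the corollary has already been established in the preceding propositions. The only sense in which this step is a ``proof'' at all is that it packages the combinatorial/geometric facts of Section~\ref{sec:PGraphs} so far into the language of positroids and Grassmann necklaces from Section~\ref{sec:Grneck}, which is the form needed for the subsequent arguments (where maximality of $\f(G)$ within $\M_{\I}$ will be established in Section~\ref{sec:tilings}).
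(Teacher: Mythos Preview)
Your proposal is correct and matches the paper's own proof essentially line for line: the paper likewise cites Proposition~\ref{prop:PartConverse} for $\I\subseteq\f(G)$ and $\f(G)\subseteq\M_{\I}$, and Proposition~\ref{prop:plagws} for weak separation of $\f(G)$. There is nothing to add.
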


\begin{proof}
We have $\I \subseteq \f(G)$ by Proposition~\ref{prop:PartConverse}; that $\I$ is weakly separated by Proposition~\ref{prop:plagws}; and that $\f(G) \subseteq \M_{\I}$ by Proposition~\ref{prop:PartConverse}.
\end{proof}

We have not yet shown that $\f(G)$ is maximal; that will be Theorem~\ref{thrm:IsMaximal}.

\section{Plabic tilings} \label{sec:tilings}
Given a maximal weakly separated collection $\C$ of $\binom{[n]}{k}$, we need to construct a plabic graph $G$ such that $\f(G) = \C$. To do so, we will define a \newword{plabic tiling}. For a weakly separated collection $\C$, we will construct a $2$-dimensional CW-complex embedded in $\RR^2$, and denote this complex by $\Sigma(\C)$.  Maximal plabic tilings will turn out to be dual to reduced bipartite plabic graphs. We will associate a plabic tiling to any weakly separated collection, maximal or not.
See Example~\ref{IncompleteExample} for a discussion of $\Sigma(\C)$ for a non-maximal $\C$.

Figure~\ref{TilingExample} shows $\Sigma(\C)$ for $\C$ the collection of face labels in Figure~\ref{fig:pla_label}. 
The points $v_i$, which we will introduce soon, are shown in the lower right of Figure~\ref{TilingExample}. 
They are shown in the same scale as the rest of the figure.

\begin{figure}
\centerline{\scalebox{0.45}{
\psfrag{123}[cc][cc]{\Huge $123$}
\psfrag{234}[cc][cc]{\Huge $234$}
\psfrag{345}[cc][cc]{\Huge $345$}
\psfrag{456}[cc][cc]{\Huge $456$}
\psfrag{567}[cc][cc]{\Huge $567$}
\psfrag{678}[cc][cc]{\Huge $678$}
\psfrag{178}[cc][cc]{\Huge $178$}
\psfrag{128}[cc][cc]{\Huge $128$}
\psfrag{127}[cc][cc]{\Huge $127$}
\psfrag{137}[cc][cc]{\Huge $137$}
\psfrag{136}[cc][cc]{\Huge $136$}
\psfrag{135}[cc][cc]{\Huge $135$}
\psfrag{134}[cc][cc]{\Huge $134$}
\psfrag{167}[cc][cc]{\Huge $167$}
\psfrag{156}[cc][cc]{\Huge $156$}
\psfrag{145}[cc][cc]{\Huge $145$}
\includegraphics{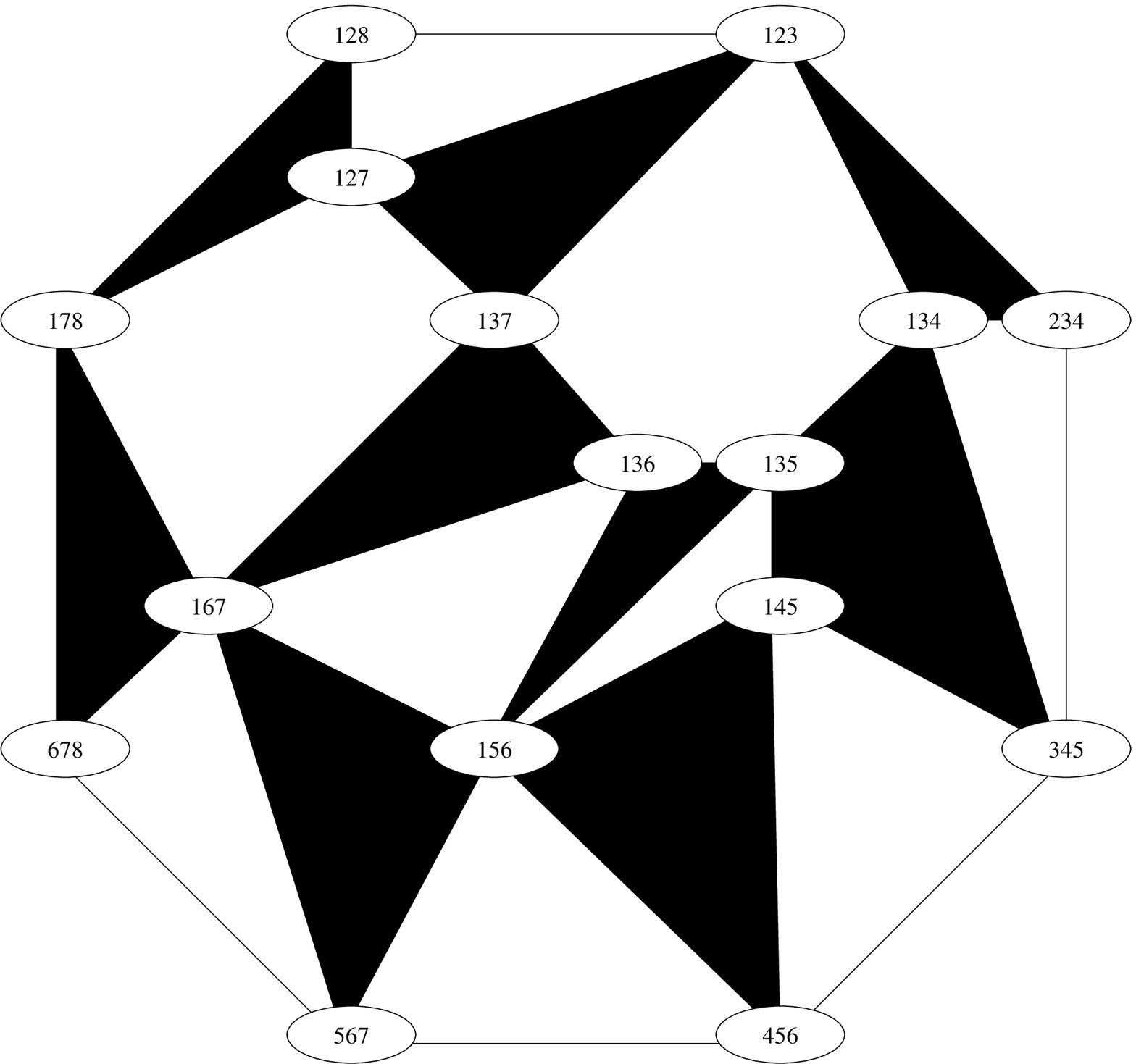}}
\quad
\scalebox{0.45}{
\psfrag{v1}[cc][cc]{\Huge $v_1$}
\psfrag{v2}[cc][cc]{\Huge $v_2$}
\psfrag{v3}[cc][cc]{\Huge $v_3$}
\psfrag{v4}[cc][cc]{\Huge $v_4$}
\psfrag{v5}[cc][cc]{\Huge $v_5$}
\psfrag{v6}[cc][cc]{\Huge $v_6$}
\psfrag{v7}[cc][cc]{\Huge $v_7$}
\psfrag{v8}[cc][cc]{\Huge $v_8$}
\includegraphics{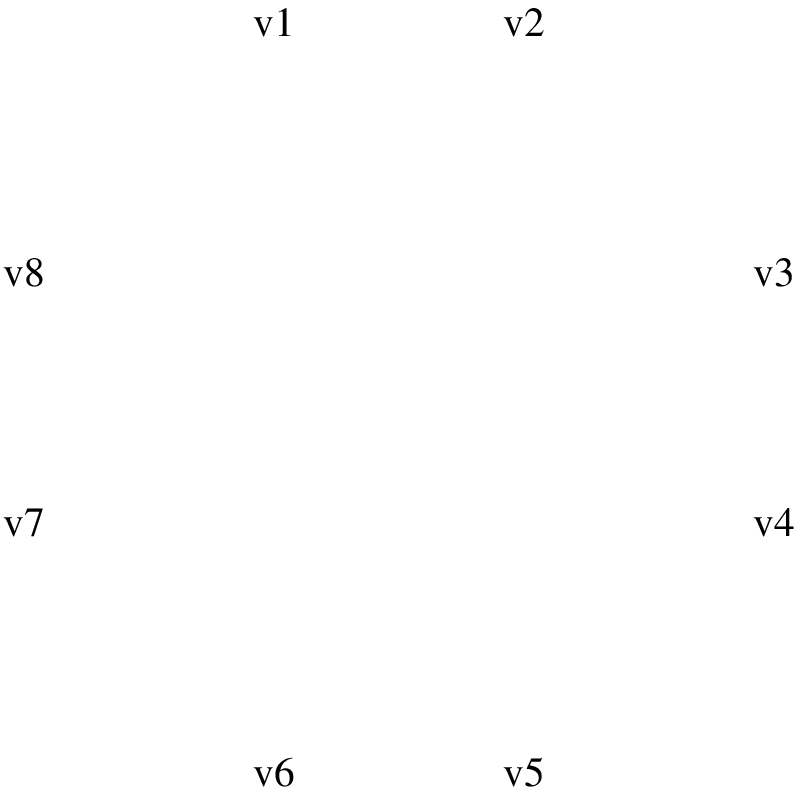}}}
\caption{$\Sigma(\C)$ for $\C$ the face labels of figure~\ref{fig:pla_label}, and the points $v_i$}\label{TilingExample}
\end{figure}

Let us fix $\C$, a weakly separated collection in $\binom{[n]}{k}$. For $I$ and $J \in \binom{[n]}{k}$, say that $I$ \newword{neighbors} $J$ if 
$$|I \setminus J|=|J \setminus I|=1.$$
Let $K$ be any $(k-1)$ element subset of $[n]$. We define the \newword{white clique} $\W(K)$ to be the set of $I \in \C$ such that $K \subset I$. Similarly, for $L$ a $(k+1)$ element subset of $[n]$, we define the \newword{black clique} $\B(L)$ for the set of $I \in \C$ which are contained in $L$. We call a clique \newword{nontrivial} if it has at least three elements. 
Observe that, if $\mathcal{X}$ is a nontrivial clique, then it cannot be both black and white. 

\begin{remark}
Let $G$ be a reduced plabic graph of rank $k$. The black (respectively white) cliques of $\f(G)$ correspond to the black (white) vertices of $G$. More precisely, for each vertex $v$, the labels of the faces bordering $v$ form a clique, and all cliques are of this form. %See Proposition~\ref{prop:duality}
\end{remark}

Observe that a white clique $\W(K)$ is of the form $\{ K a_1, K a_2, \ldots, K a_r \}$ for some $a_1$, $a_2$, \dots, $a_r$, which we take to be cyclically ordered. Similarly, $\B(L)$ is of the form $\{ L \setminus b_1, L \setminus b_2, \ldots, L \setminus b_s \}$, with the $b_i$'s cyclically ordered. 
If $\W(K)$ is nontrivial, we define the boundary of $\W(K)$ to be the cyclic graph 
$$(K a_1) \to (K a_2) \to \cdots \to (K a_r) \to (K a_1).$$ 
Similarly, the boundary of a nontrivial $\B(L)$ is 
$$(L \setminus b_1) \to (L \setminus b_2) \to \cdots \to (L \setminus b_s) \to (L \setminus b_1).$$ 
If $(J, J')$ is a two element clique, then we define its boundary to be the graph with a single edge $(J,J')$; we define an one element clique to have empty boundary.

\begin{lemma} \label{lem:nbhr}
Let $I$ neighbor $J$; set $K = I \cap J$ and $L = I \cup J$. 
If $\W(K)$ and $\B(L)$ are nontrivial, then there is an edge between $I$ and $J$ in the boundaries of $\W(K)$ and $\B(L)$.
\end{lemma}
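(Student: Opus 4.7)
The plan is to write $I = K\cup\{a\}$ and $J = K\cup\{b\}$ with $a\neq b$ in $[n]\setminus K$, so that $L = K\cup\{a,b\}$. Both $a$ and $b$ appear in the cyclically ordered list $a_1,\dots,a_r$ of indices defining $\partial\W(K)$, since $Ka, Kb \in \W(K)$; similarly $a$ and $b$ appear in the list $b_1,\dots,b_s$ defining $\partial\B(L)$, since $L\setminus a = Kb$ and $L\setminus b = Ka$ lie in $\B(L)$. The task is to show that in each of these two cyclic orders, $a$ and $b$ sit in consecutive positions.

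I would handle $\W(K)$ first, by contradiction. If $a$ and $b$ were not cyclically adjacent in $\{a_1,\dots,a_r\}$, both open arcs of $[n]\setminus\{a,b\}$ would contain some $a_t$ with $Ka_t\in\C$; pick $a_i, a_j$ so that $a, a_i, b, a_j$ are cyclically ordered. Now use the nontriviality of $\B(L)$: there is some index $c\in\{b_1,\dots,b_s\}\setminus\{a,b\}$, and necessarily $c\in K$ because $L\setminus\{a,b\}=K$. The element $c$ must lie in one of the two open arcs between $a$ and $b$, so choose the one of $\{a_i, a_j\}$ that lies in the opposite arc; call it $a_\star$. A direct set calculation gives
\[
(Ka_\star)\setminus(L\setminus c) = \{a_\star, c\}, \qquad (L\setminus c)\setminus(Ka_\star) = \{a, b\},
\]
and the four elements $a,b,a_\star,c$ are distinct (using $a_\star \notin K$, $c\in K$, $a,b\notin K$) and appear in a cyclic order that interleaves the pairs $\{a_\star, c\}$ and $\{a, b\}$. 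This violates weak separation of $\C$, contradicting $Ka_\star, L\setminus c \in \C$. Hence $a$ and $b$ are cyclically adjacent in $\partial\W(K)$.

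For the black clique boundary I would argue in the same style, or equivalently invoke the complementation involution $S\mapsto[n]\setminus S$, which carries weakly separated collections in $\binom{[n]}{k}$ to weakly separated collections in $\binom{[n]}{n-k}$ and swaps the roles of white and black cliques (with $K$ and $L^c$ exchanged). Applying the white-clique result to the complementary collection then yields the $\B(L)$ statement.

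I expect the only thing demanding care is the cyclic-order bookkeeping in the key step: verifying that $\{a,b,a_\star,c\}$ are four distinct elements and that the choice of $a_\star$ really produces the interleaved cyclic order $a,a_\star,b,c$ (or $a,c,b,a_\star$). Once these are pinned down, the witness $(Ka_\star, L\setminus c)$ to non-weak-separation is immediate and no deeper input from the theory of positroids or plabic graphs is required.
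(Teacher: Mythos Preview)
Your argument is correct and uses the same key observation as the paper: a white-clique element $Ka_\star$ and a black-clique element $L\setminus c$ chosen in opposite arcs of $\{a,b\}$ fail to be weakly separated, with the witnessing quadruple $\{a,b,a_\star,c\}$. The only organizational difference is that the paper treats the two cliques simultaneously by introducing the four sets $S_1,\dots,S_4$ of white and black indices in each arc and showing that either $S_1=S_3=\emptyset$ or $S_2=S_4=\emptyset$ (which incidentally yields the slightly stronger fact that the empty arcs for $\W(K)$ and $\B(L)$ lie on the same side), whereas you handle $\W(K)$ first and then invoke symmetry or complementation for $\B(L)$; the core computation is identical.
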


\begin{proof}
Let $i$ be the lone element of $I \setminus J$ and $j$ the lone element of $J \setminus I$. 
Let $\W(K) = \{ K a_1, K a_2, \ldots, K a_r \}$ and $\B(L) = \{ L \setminus b_1, L \setminus b_2, \ldots, L \setminus b_s \}$. 
So $i$ and $j$ occur among the $a$'s, and among the $b$'s. Consider the four sets:

\begin{eqnarray*} 
S_1 := \{ a_m : a_m \in (i, j) \} & \quad & S_2 := \{ a_m :  a_m \in (j, i) \} \\
S_3 := \{ b_m : b_m \in (i, j) \} & \quad & S_4 := \{ b_m :  b_m \in (j, i) \} 
\end{eqnarray*}

Our goal is to show that either $S_1 = S_3 = \emptyset$ or $S_2 = S_4 = \emptyset$. 

Suppose (for the sake of contradiction) that $S_1$ and $S_4$ are both nonempty, with $a \in S_1$ and $b \in S_4$.
Set $P = K a$ and $Q = L \setminus b$. Then $a \in P \setminus Q$, $i \in Q \setminus P$, $b \in P \setminus Q$ and $j \in Q \setminus P$, so $P$ and $Q$ are not weakly separated.
We have a contradiction and we deduce that at least one of $S_1$ and $S_4$ is empty.
Similarly, at least one of $S_2$ and $S_3$ is empty.
On the other hand, $\W(K)$ is nontrivial, so at least one of $S_1$ and $S_2$ is nonempty. 
Similarly, at least one of $S_3$ and $S_4$ is nonempty. 

%Combining the above statements, we deduce that either $S_1$ and $S_3$ are empty, and $S_2$ and $S_4$ are not, or vice versa.
Since at least one of $(S_1, S_4)$ is empty, and at least one of $(S_2, S_3)$ is, we deduce that at least two of $(S_1, S_2, S_3, S_4)$ are empty.
Similarly, at most two of them are empty, so precisely two of the $S_i$'s are empty.
Checking the $6$ possibilities, the ones consistent with the above restrictions are that either $S_1$ and $S_3$ are empty, and $S_2$ and $S_4$ are not, or vice versa.

\end{proof}

We now define a two dimensional CW-complex $\Sigma(\C)$.
The vertices of $\Sigma(\C)$ will be the elements of $\C$.
There will be an edge $(I,J)$ if
\begin{enumerate}
\item $\W(I \cap J)$ is nontrivial and $(I,J)$ appears in the boundary of $\W(I \cap J)$ or
\item $\B(I \cup J)$ is nontrivial and $(I,J)$ appears in the boundary of $\B(I \cup J)$ or
\item $\W(I \cap J) = \B(I \cup J)  = \{ I, J \}$.\footnote{This third case is only important in certain boundary cases involving disconnected positroids. For most of the results of this paper, it is not important whether or not we include an edge in this case.}
\end{enumerate}
There will be a (two-dimensional) face of $\Sigma(\C)$ for each nontrivial clique $\mathcal{X}$ of $\C$. 
The boundary of this face will be the boundary of $\mathcal{X}$. 
By Lemma~\ref{lem:nbhr}, all edges in the boundary of $\mathcal{X}$ are in $\Sigma(\C)$, so this makes sense.
We will refer to each face of $\Sigma(\C)$ as \newword{black} or \newword{white}, according to the color of the corresponding clique. We call a CW-complex of the form $\Sigma(\C)$ a \newword{plabic tiling}. So far, $\Sigma(\C)$ is an abstract CW-complex. Our next goal is to embed it in a plane.

Fix $n$ points $v_1$, $v_2$, \dots, $v_n$ in $\RR^2$, at the vertices of a convex $n$-gon in clockwise order. 
Define a linear map $\pi: \RR^n \to \RR^2$ by $e_{a} \mapsto v_{a}$. 
For $I \in \binom{[n]}{t}$, set $e_I = \sum_{a \in I} e_{a}$. 

We will need a notion of weak separation for vectors in $\RR^{[n]}$. Let $e$ and $f$ be two vectors in $\RR^{[n]}$, with $\sum_{a \in [n]} e_{a} = \sum_{a \in [n]} f_{a}$. 
 We define $e$ and $f$ to be \newword{weakly separated} if there do \emph{not} exist  $a$, $b$, $a'$ and $b'$, cyclically ordered, with $e_{a} > f_{a}$, $e_{b} < f_{b}$, $e_{a'} > f_{a'}$ and $e_{b'} < f_{b'}$. 
 So, for $I$ and $J \in \binom{[n]}{k}$, we have $I \parallel J$ if and only if $e_I \parallel e_J$.

\begin{lemma}\label{lem:vecinj}
Let $e$ and $f$ be two different points in $\RR^n$, with  $\sum_{a \in [n]} e_{a} = \sum_{a \in [n]} f_{a}$ and $e \parallel f$. Then $\pi(e) \neq \pi(f)$.
\end{lemma}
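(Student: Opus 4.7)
The plan is to set $g = e - f$ and translate the hypotheses. Then $g \ne 0$ (since $e \ne f$), $\sum_{a \in [n]} g_a = 0$, and the weak separation of $e$ and $f$ says exactly that the index sets $\{a : g_a > 0\}$ and $\{a : g_a < 0\}$ do not cyclically interleave. The goal $\pi(e) \ne \pi(f)$ is equivalent to $\sum_a g_a v_a \ne 0$.

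First I would use the weak separation hypothesis to produce a cyclic cut. Concretely, non-interleaving of the positive and negative positions of $g$ means there is a cyclic interval $[i,j)$ so that $g_a \ge 0$ for every $a \in [i,j)$ and $g_a \le 0$ for every $a \in [j,i)$. Since $\sum_a g_a = 0$ and $g \ne 0$, both sets must contain at least one strict inequality.

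Next I would exploit the fact that the $v_a$ sit at the vertices of a convex $n$-gon in clockwise order. The two arcs $\{v_a : a \in [i,j)\}$ and $\{v_a : a \in [j,i)\}$ are then consecutive vertex sets on the polygon, so by convexity there is an affine line in $\RR^2$ strictly separating them and missing every $v_a$. Equivalently, there is a linear functional $\ell \colon \RR^2 \to \RR$ (which absorbs the affine constant using $\sum g_a = 0$) with $\ell(v_a) > 0$ for $a \in [i,j)$ and $\ell(v_a) < 0$ for $a \in [j,i)$.

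The conclusion is then a one-line sign computation: every term $g_a \ell(v_a)$ is $\ge 0$, and at least one (any $a$ with $g_a > 0$, or alternatively any $a$ with $g_a < 0$) is strictly positive, so
\[
\ell\!\left(\sum_{a \in [n]} g_a v_a\right) = \sum_{a \in [n]} g_a \ell(v_a) > 0,
\]
which forces $\pi(e) - \pi(f) = \sum_a g_a v_a \ne 0$. The only step requiring any care is the second one: building the linear functional that strictly separates the two arcs of the convex polygon without passing through any $v_a$, and absorbing the constant shift via $\sum g_a = 0$ so that only a truly linear functional on $\RR^2$ need be constructed.
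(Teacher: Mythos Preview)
Your proof is correct and follows essentially the same route as the paper: both arguments use weak separation to put the positive and negative coordinates of $e-f$ into complementary cyclic intervals, then use a line separating the two corresponding arcs of the convex $n$-gon to conclude that $\pi(e)-\pi(f)\neq 0$. The only cosmetic difference is that the paper writes $\pi(e)-\pi(f)$ as a positive combination of vectors $v_a-v_b$ crossing the separating line, while you apply the (linear part of the) separating functional directly; these are the same computation. One small wording issue: as stated, a genuinely \emph{linear} functional with $\ell(v_a)>0$ on one arc and $\ell(v_a)<0$ on the other need not exist (the origin could lie inside the polygon); what you actually use is an affine separating functional together with $\sum_a g_a=0$, exactly as your parenthetical indicates, so the displayed identity should be read as $\sum_a g_a\,\ell(v_a)>0$ with $\ell$ affine.
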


\begin{proof}
Since $e$ and $f$ are weakly separated, there are some $\ell$ and $r$ in $[n]$ such that $e_{a} - f_{a}$ is nonnegative for $a$ in $[\ell, r)$ and is nonpositive for $a$ in $[r, \ell)$. We have 
\begin{equation}
\pi(e) - \pi(f) = \sum_{a \in [\ell, r)} (e_{a} - f_{a}) v_{a} - \sum_{b \in [r, \ell)} (f_{a} - e_{a}) v_{a}. \label{dipslacement}
\end{equation}
Since  $\sum_{a \in [n]} e_{a} = \sum_{a \in [n]} f_{a}$, the right hand side of the above equation is a positive linear combination of vectors of the form $v_{a} - v_{b}$, with $a \in [\ell, r)$ and $b \in [r, \ell)$. 

Since the $v_a$'s are the vertex of a convex $n$-gon, there is a line $\lambda$ in $\RR^2$ separating $\{ v_{a} : a \in [\ell, r) \}$ from $\{ v_{b} : b \in [r, \ell) \}$. So every vector $v_{a} - v_{b}$ as above crosses from the former side of $\lambda$ to the latter. A positive linear combination of such vectors must cross the line $\lambda$, and can therefore not be zero. So the right hand side of the above equation is nonzero, and we deduce that $\pi(e) \neq \pi(f)$.
\end{proof}

We extend the map $\pi$ to a map from $\Sigma(\C)$ to $\RR^2$ as follows: Each vertex $I$ of $\Sigma(\C)$ is sent to $\pi(e_I)$ and each face of $\Sigma(\C)$ is sent to the convex hull of the images of its vertices. 
We encourage the reader to consult Figure~\ref {TilingExample} and see that the vector $\pi(Si) - \pi(Sj)$ is a translation of $v_i - v_j$.

\begin{proposition}\label{prop:embed}
For any weakly separated collection $\C$, the map $\pi$ embeds the CW-complex $\Sigma(\C)$ into $\RR^2$.
\end{proposition}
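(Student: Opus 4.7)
The plan is to verify $\pi$ embeds $\Sigma(\C)$ in four stages: vertex injectivity, non-degeneracy of each face, local injectivity along edges and around vertices, and finally global injectivity.

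First, for vertex injectivity: any two distinct $I,J\in\C$ give $\{0,1\}$-vectors $e_I,e_J$ of common coordinate sum $k$; since $\C$ is weakly separated, $e_I\parallel e_J$ as vectors in $\RR^n$, so Lemma~\ref{lem:vecinj} yields $\pi(e_I)\neq\pi(e_J)$. For face shape: a nontrivial white clique $\W(K)=\{Ka_1,\dots,Ka_r\}$ with $r\ge 3$ and $a_1,\dots,a_r$ cyclically ordered maps to the translates $\pi(e_K)+v_{a_j}$ of $r$ vertices of the convex $n$-gon in their cyclic order, hence a convex $r$-gon; the black case is symmetric using $\pi(e_L)-v_{b_j}$.

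Second, I would establish local injectivity. Along an edge $(I,J)$ that sits in the boundaries of both nontrivial cliques $\W(K)$ and $\B(L)$, with $\{i\}=I\setminus J$ and $\{j\}=J\setminus I$, the proof of Lemma~\ref{lem:nbhr} shows that all remaining $a$'s of $\W(K)$ and $b$'s of $\B(L)$ lie in a single cyclic arc relative to $\{i,j\}$. Since the other vertices of $\pi(\W(K))$ are displaced from $\pi(e_I)$ by $v_a-v_i$ while those of $\pi(\B(L))$ are displaced by $v_j-v_b$, and since $v_a,v_b$ lie on a common side of the chord $v_iv_j$---so that the reflection $v_i+v_j-v_b$ lies on the opposite side---the two polygonal faces sit on opposite sides of the line through $\pi(e_I),\pi(e_J)$. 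Around a vertex $I$, outgoing edges correspond to pairs $(i,j)\in I\times([n]\setminus I)$ with $I\setminus i\cup j\in\C$, in direction $v_j-v_i$; two such edges share a face iff they share $i$ (via $\W(I\setminus i)$) or share $j$ (via $\B(I\cup j)$). Using pairwise weak separation of the neighbors of $I$ together with the angular order of chord directions inherited from the convex $n$-gon, I would verify that the face-wedges at $\pi(e_I)$ appear in the correct cyclic order without angular overlap.

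Finally, to pass from local embedding to a global one, the plan is a signed-degree argument. The opposite-sides property from the edge analysis lets me assign coherent orientations $\epsilon_F\in\{\pm1\}$ to the faces so that the signed indicator $d(p)=\sum_F\epsilon_F\,\chi_{\mathrm{int}\,\pi(F)}(p)$ is constant across each edge shared by two faces; it jumps by $\pm1$ only across boundary edges of $\pi(\Sigma(\C))$. Since $d$ vanishes at infinity, this forces $d\in\{0,1\}$, and combined with local injectivity this forces $\pi$ to be globally injective. I expect the main obstacle to be the vertex-star analysis: proving that \emph{all} face-wedges incident to a given vertex $I$ fit together cyclically without overlap requires the full force of pairwise weak separation applied simultaneously to many neighbors of $I$, beyond the two-at-a-time argument sufficient along a single edge.
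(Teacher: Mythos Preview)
Your plan diverges substantially from the paper's argument, and the final global step has a genuine gap.

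The paper does not do a local-to-global analysis at all. Instead it argues directly that any two closed faces $P$ and $Q$ of $\Sigma(\C)$ have disjoint interiors. The mechanism is to lift a hypothetical common point to $\RR^n$: if $\zeta\in\pi(P)\cap\pi(Q)$, write $\zeta=\pi(u)=\pi(v)$ where $u,v$ are the corresponding convex combinations of the $e_I$'s over the vertices of $P$ and of $Q$. A short combinatorial argument (handled explicitly in the hardest case, a white face $\W(K)$ versus a black face $\B(L)$ with $K\not\subset L$) shows there are cyclic cut-points $c,d$ with all positive coordinates of $u-v$ in $[d,c)$ and all negative ones in $[c,d)$; thus $u\parallel v$ as vectors and Lemma~\ref{lem:vecinj} gives $\pi(u)\neq\pi(v)$, a contradiction. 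This is recorded as Lemma~\ref{WriteThis}. No edge, vertex-star, or degree analysis is needed.

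The gap in your plan is the inference ``$d$ vanishes at infinity, so $d\in\{0,1\}$''. The properties you list --- $d$ constant across interior edges, changing by $\pm 1$ across boundary edges, and $d=0$ far away --- do not bound $d$ by $1$. Since all faces of $\Sigma(\C)$ are mapped with the same orientation (see Remark~\ref{rem:circ}), your signs $\epsilon_F$ are all $+1$ and $d(p)$ is simply the number of $2$-faces covering $p$. For a general (non-maximal) weakly separated collection $\C$, the complex $\Sigma(\C)$ need not be a disc or even connected; nothing in your local analysis prevents two components, or two faces sharing no cell, from mapping to nested regions with $d=2$ in the inner one. Ruling that out is exactly the global non-overlap statement you are trying to prove, so the degree argument is circular at this point. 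Your local steps (vertex injectivity, convexity of each face, the opposite-sides observation along a shared edge via Lemma~\ref{lem:nbhr}) are correct and pleasant, but they only control pairs of faces that are adjacent in $\Sigma(\C)$; the paper's barycentric-lift argument is what handles \emph{all} pairs at once.
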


\begin{proof}
Suppose, to the contrary, that there are two faces of $\Sigma(\C)$ whose interiors have overlapping image. We will deal with the case that both of these faces are two dimensional, with one white and the other black. The other possibilities are similar, and easier. Let the vertices of the two faces be $\{ K a_1, K a_2, \ldots, K a_r \}$ and $\{ L \setminus b_1, L \setminus b_2, \ldots, L  \setminus b_s \}$. Write $A = \{ a_1, \ldots, a_r \}$ and $B = \{ b_1, \ldots, b_s \}$. 

Before we analyze the geometry of $\pi$, we will need to do some combinatorics. Note that, if $K \subset L$, then the faces are convex polygons with a common edge, lying on opposite sides of that edge, and thus have disjoint interiors.\footnote{The fact that we have to consider this possibility is one of the things that makes the case of two faces of opposite color more difficult.} So we may assume that $K \setminus L$ is nonempty. Also, $|L|=k+1 > |K|=k-1$, so $L \setminus K$ is nonempty. Let $x \in K \setminus L$ and $y \in L \setminus K$. Suppose that $a_i$ and $b_j$ are in $(x,y)$. Then weak separation of $K a_i$ and $L \setminus b_j$ implies that $a_i \leq_x b_j$. More generally, if $a \in (K \cup A) \setminus L$ and $b \in (L \cup B) \setminus K$ are both in $(x,y)$, we still have $a \leq_x b$. Similarly, if $a \in (K \cup A) \setminus L$ and $b \in (L \cup B) \setminus K$ are both in $(y,x)$, then $a \geq_x b$. So, there is some $c \in [x,y]$ and $d \in [y,x]$ such that $(K \cup A) \setminus L \subseteq [d,c)$ and $(L \cup B) \setminus K \subseteq [c,d)$.

Now, let $\zeta$ be the point which is in the interior of both faces, say
$$\zeta = \sum p_i \pi(K a_i) = \sum q_i \pi(L \setminus b_i)$$
for some positive scalars $p_i$ and $q_i$ with $\sum p_i = \sum q_i =1$.
Define the vectors $u$ and $v$ by $u = \sum p_i e_{K a_i}$ and $v = \sum q_i e_{L \setminus b_i}$, so $\pi(u) = \pi(v)$. 
All the positive entries of $u-v$ are contained in $[d,c)$, and all the negative entries in $[c,d)$.
So $u$ and $v$ are weakly separated and, by Lemma~\ref{lem:vecinj}, $\pi(u) \neq \pi(v)$, a contradiction.
\end{proof}

Looking through the proof, we have proved the more technical result:

\begin{lemma} \label{WriteThis}
Let $P$ and $Q$ be different faces of $\Sigma(\C)$. Let $V(P)$ and $V(Q)$ be the sets of vertices of the faces $P$ and $Q$. Let $u$ be a vector of the form $\sum_{I \in V(P)} c_I e_I$, where $c_I >0$ and $\sum c_I=1$. Similarly, let $v$ be a vector of the form $\sum_{I \in V(Q)} d_I e_I$, with $d_I>0$ and $\sum d_I=1$. Then $u$ and $v$ are weakly separated.
\end{lemma}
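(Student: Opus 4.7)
My plan is to reuse and generalize the combinatorial step that already appears inside the proof of Proposition~\ref{prop:embed}: in the main case treated there, one exhibits a cyclic chord cutting $[n]$ into arcs $[d, c)$ and $[c, d)$ such that every positive entry of $u - v$ lies in $[d, c)$ and every negative entry in $[c, d)$, which gives weak separation of $u$ and $v$ immediately. Lemma~\ref{WriteThis} is essentially this cyclic-arc observation for every pair of distinct faces $P$ and $Q$, stripped of the further geometric consequence $\pi(u) \ne \pi(v)$.

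To carry this out uniformly, I would first describe convex combinations on an arbitrary face. For a face $F$, set $T(F) := \bigcap_{I \in V(F)} I$ and $S(F) := \bigcup_{I \in V(F)} I \setminus T(F)$. Then any convex combination $u = \sum c_I e_I$ with all $c_I > 0$ satisfies $u = e_{T(F)} + \phi^F$ with $\phi^F \ge 0$ supported on $S(F)$, and $0 \le u_x \le 1$, with $u_x = 1$ exactly when $x \in T(F)$ and $u_x = 0$ exactly when $x \notin T(F) \cup S(F)$. Directly from this, the positive support of $u - v$ is contained in $(T(P) \cup S(P)) \setminus T(Q)$ and the negative support in $(T(Q) \cup S(Q)) \setminus T(P)$.

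The heart of the argument is then to show that these two potential support sets lie in complementary cyclic arcs. In the main white-black case this is precisely what the proof of Proposition~\ref{prop:embed} does: one picks $x \in T(P) \setminus T(Q)$ and $y \in T(Q) \setminus T(P)$, and uses pairwise weak separation of each $K a_i \in V(P)$ with each $L \setminus b_j \in V(Q)$ to force all of $(T(P) \cup S(P)) \setminus T(Q)$ into one arc $[d, c)$ and all of $(T(Q) \cup S(Q)) \setminus T(P)$ into the complementary arc $[c, d)$. The remaining face-pairings (white-white, black-black, and cases in which $P$ or $Q$ is an edge or a singleton) follow by the same template with the appropriate vertex-label pair replacing $(K a_i, L \setminus b_j)$. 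The hardest part will be the bookkeeping when an index $x$ lies in $S(P) \cap S(Q)$ or in $S(P) \cap T(Q)$: in those cases the sign of $(u - v)_x$ is not pinned down by the support structure alone, and one needs to invoke pairwise weak separation of specific vertex labels of $P$ and $Q$ that do or do not contain $x$ to force $(u - v)_x$ to have the correct sign. Once this is verified, the arc separation of the signed support of $u - v$ directly forbids any cyclically ordered $(a, b, a', b')$ realizing a weak-separation violation, and the lemma follows.
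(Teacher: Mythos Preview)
Your plan matches the paper's approach exactly: the paper gives no separate argument for Lemma~\ref{WriteThis} but simply says ``Looking through the proof [of Proposition~\ref{prop:embed}], we have proved the more technical result,'' so the lemma is meant to be read off from the arc-separation step you have correctly isolated. Your reformulation in terms of $T(F)$ and $S(F)$ is a clean way to organize the case analysis that the paper leaves implicit with the phrase ``the other possibilities are similar, and easier.''

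There is, however, one genuine gap in your outline. For an index $x \in S(P) \cap S(Q)$ you propose to ``invoke pairwise weak separation of specific vertex labels of $P$ and $Q$ \dots to force $(u-v)_x$ to have the correct sign.'' This cannot work: the sign of $(u-v)_x$ at such an $x$ is not determined by the combinatorics of $\C$ at all, it genuinely depends on the coefficients $c_I$ and $d_I$. For example, with two white faces $\W(K)$ and $\W(K')$ and $x \in A \cap A'$, one has $(u-v)_x = c_{Kx} - d_{K'x}$, which can take either sign. Consequently there is no single pair of arcs $[d,c)$, $[c,d)$, chosen independently of the coefficients, that absorbs the positive and negative supports of $u - v$; the chord one exhibits must be allowed to move with the coefficients. (The paper's own sentence ``All the positive entries of $u-v$ are contained in $[d,c)$'' is imprecise on exactly this point.)

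What pairwise weak separation actually buys you is a \emph{positional} constraint on the ambiguous indices. For instance, in the white--white case, if $x$ and $x'$ were two distinct elements of $A \cap A'$ lying in the same open cyclic arc between some $p \in K \setminus K'$ and some $q \in K' \setminus K$, then $Kx'$ and $K'x$ would fail to be weakly separated (witnessed by the cyclically ordered quadruple $p, x, x', q$). So the ambiguous indices are sparse relative to the definitely-signed ones, and this is what guarantees that, whatever signs the coefficients produce at the ambiguous positions, the resulting positive and negative supports can still be separated by some chord. Rewriting the ``bookkeeping'' paragraph along these lines would close the gap.
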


\begin{remark}
\label{rem:circ}
Let $K a_1$, $K a_2$, \dots, $K a_r$ be the vertices of a white face of $\C$, with the $a_i$ in cyclic order. Then the vertices $\pi(K a_i)$ appear in clockwise order in the planar embedding. If the vertices of a black face are $L \setminus b_1$, $L \setminus b_2$, \dots, $L \setminus b_r$, with the $b_i$ again in cyclic order, then the vertices $\pi(L \setminus b_i)$ again appear in clockwise order.
This is because negation is an orientation preserving operation on $\RR^2$; see Figure~\ref {TilingExample}.
\end{remark}

%Now let's define a dual graph of a clean reduced plabic graph. Let's fix a plabic graph $G$. Let $G^{*}$ be a graph such that
%\begin{itemize}
%\item there is one vertex for each face of $G$,
%\item each vertex of $G^{*}$ is labeled with the label of the corresponding vertex in $G$,
%\item two vertices are connected by an edge if the corresponding faces are adjancet in $G$ and
%\item a face of $G^{*}$ is colored with the same color of the corresponding vertex in $G$.
%\end{itemize}

%In the next section, we will show that $G^{*}$ is a plabic tiling of $\f(G)$.

Now, we will study the construction $\Sigma(\C)$ when $\C$ is a weakly separated collection for a particular positroid $\M$.
Propositions~\ref{GlueAtPoint} through~\ref{lem:Intervals} are obvious in the case that $\M$ is the largest positroid, $\binom{[n]}{k}$.

Let $\M$ be a positroid and $\I$ the corresponding Grassmann necklace. 
We first give a lemma to reduce to the case when $\I$ is connected: Suppose that $\I$ is not connected, with $I_i=I_j$.
Let $\C$ be a weakly separated collection for $\M^r$; and use the notations $\C^r$, $I^r$ and so forth from Section~\ref{Connected}.
It is easy to check that:
\begin{proposition} \label{GlueAtPoint}
The complex $\Sigma(\C)$ is formed by gluing $\Sigma(\C^1)$ and $\Sigma(\C^2)$ together at the point $I^1 \cup I^2$. 
\end{proposition}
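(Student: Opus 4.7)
The plan is to check that vertices, edges, and two-dimensional faces of $\Sigma(\C)$ match those of $\Sigma(\C^1)$ and $\Sigma(\C^2)$ wedged at $I^1\cup I^2$, and then verify that the planar embeddings glue correctly. Write $\phi_1(J) := J \cup I^2$ and $\phi_2(J) := I^1 \cup J$ for the injections $\C^r \hookrightarrow \C$ furnished by Proposition~\ref{SplittingSummary}, so that $\C = \phi_1(\C^1) \cup \phi_2(\C^2)$ with the two images meeting only at the vertex $I^1 \cup I^2 = \phi_1(I^1) = \phi_2(I^2)$.

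For the clique structure, note that Proposition~\ref{DirectSum} forces $|I \cap [i,j)| = k^1$ and $|I \cap [j,i)| = k^2$ for every $I \in \M$. Hence, for a $(k-1)$-subset $K$ with $\W_{\C}(K) \neq \emptyset$, the pair $(|K \cap [i,j)|, |K \cap [j,i)|)$ must equal either $(k^1-1,k^2)$ or $(k^1,k^2-1)$. In the first case, comparing sizes forces $K \cap [j,i) = I^2$, so $K = K' \cup I^2$ with $K' \subset [i,j)$; one then checks directly that $\W_{\C}(K) = \phi_1(\W_{\C^1}(K'))$, since any contribution from $\phi_2$ would force $J = I^2$ and thus collapse to $I^1 \cup I^2 = \phi_1(I^1)$. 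The second case is symmetric, and an identical analysis applies to black cliques $\B_{\C}(L)$ with $L = L' \cup I^2$ or $L = I^1 \cup L''$. Thus the nontrivial cliques of $\C$ are in bijection (via $\phi_1$ and $\phi_2$) with the disjoint union of nontrivial cliques of $\C^1$ and of $\C^2$.

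For edges, suppose $\phi_1(J)$ neighbors $\phi_2(J')$. Since $|J| = |I^1| = k^1$ and $|J'| = |I^2| = k^2$, the identities $|\phi_1(J) \setminus \phi_2(J')| = |J \setminus I^1| + |I^2 \setminus J'|$ and $|\phi_2(J') \setminus \phi_1(J)| = |I^1 \setminus J| + |J' \setminus I^2|$, each required to equal $1$, force $J = I^1$ or $J' = I^2$; either way one endpoint is $I^1 \cup I^2$. Consequently, every clique-boundary edge lies entirely on one side, and the same conclusion holds for the third type of edge (those with $\W(I \cap J) = \B(I \cup J) = \{I,J\}$), since such an edge requires $I$ and $J$ to be neighbors as well. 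Combined with the clique bijection, this identifies $\Sigma(\C)$ as an abstract CW-complex with the wedge $\Sigma(\C^1) \vee \Sigma(\C^2)$ glued at $I^1 \cup I^2$.

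Finally, for the planar embedding, choose the vertices of the convex $n^1$-gon used to embed $\Sigma(\C^1)$ to be $\{v_a\}_{a \in [i,j)}$, and similarly $\{v_a\}_{a \in [j,i)}$ for $\Sigma(\C^2)$; these are in convex cyclic position since they are subsets of the vertices of the original convex $n$-gon. Then $\pi \circ \phi_1$ is the embedding of $\Sigma(\C^1)$ translated by $\sum_{a \in I^2} v_a$, and $\pi \circ \phi_2$ is the embedding of $\Sigma(\C^2)$ translated by $\sum_{a \in I^1} v_a$; the two translates agree at the common vertex $\pi(e_{I^1 \cup I^2})$. Proposition~\ref{prop:embed} applied to the weakly separated collection $\C$ guarantees that $\pi$ embeds all of $\Sigma(\C)$ in $\RR^2$, so the two pieces meet only at that one point. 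The principal nuisance is the clique enumeration in the second paragraph; everything else is forced by the rank decomposition.
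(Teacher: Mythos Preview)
The paper does not actually prove this proposition; it simply introduces it with ``It is easy to check that:'' and moves on. Your argument supplies exactly the routine verification the authors had in mind: decomposing the cliques via the rank constraint from Proposition~\ref{DirectSum}, checking that no edge crosses between the two halves except through $I^1\cup I^2$, and observing that the planar embeddings differ from those of $\Sigma(\C^1)$ and $\Sigma(\C^2)$ only by translations. The proof is correct and is the natural way to fill in what the paper leaves implicit.

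One small point worth tightening: when you write ``comparing sizes forces $K\cap[j,i)=I^2$'' in the first case, this is not literally forced by size alone for an arbitrary $K$ with $\W_\C(K)\neq\emptyset$; it uses that some element of $\W_\C(K)$ lies in $\phi_1(\C^1)$. This is fine once you observe (as you implicitly do) that if every element of $\W_\C(K)$ came from $\phi_2(\C^2)$ then they would all share both the $[i,j)$-part $I^1$ and the $[j,i)$-part $K\cap[j,i)$, hence coincide, contradicting nontriviality. You might make that one sentence explicit.
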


See Figure~\ref{fig:Disconnected} for an example of a disconnected plabic graph and the corresponding plabic tiling.

\begin{figure}
\centerline{\scalebox{0.6}{
\psfrag{02345}[cc][cc]{$02345$}
\psfrag{12345}[cc][cc]{$12345$}
\psfrag{23459}[cc][cc]{$23459$}
\psfrag{24589}[cc][cc]{$24589$}
\psfrag{34589}[cc][cc]{$34589$}
\psfrag{34579}[cc][cc]{$34579$}
\psfrag{45789}[cc][cc]{$45789$}
\psfrag{56789}[cc][cc]{$56789$}
\psfrag{46789}[cc][cc]{$46789$}
\psfrag{0}[cc][cc]{\Huge $0$}
\psfrag{1}[cc][cc]{\Huge $1$}
\psfrag{2}[cc][cc]{\Huge $2$}
\psfrag{3}[cc][cc]{\Huge $3$}
\psfrag{4}[cc][cc]{\Huge $4$}
\psfrag{5}[cc][cc]{\Huge $5$}
\psfrag{6}[cc][cc]{\Huge $6$}
\psfrag{7}[cc][cc]{\Huge $7$}
\psfrag{8}[cc][cc]{\Huge $8$}
\psfrag{9}[cc][cc]{\Huge $9$}
\includegraphics{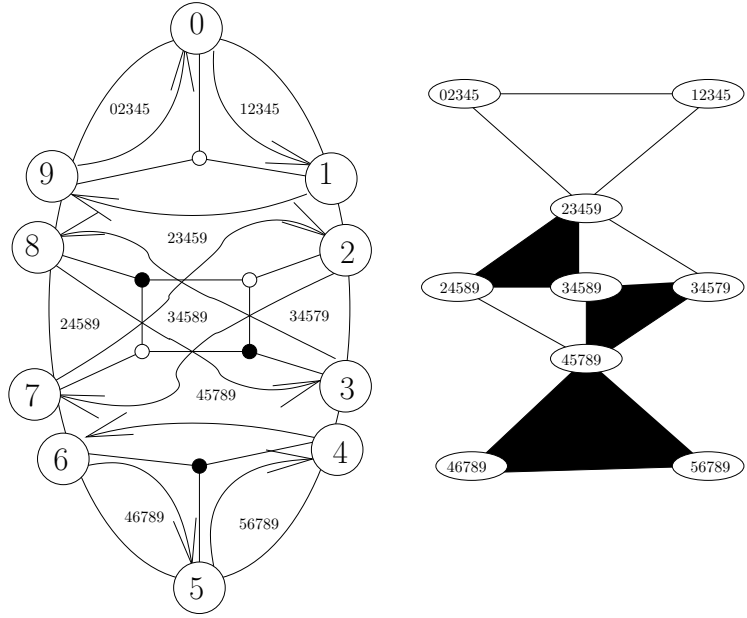}}}
\caption{A disconnected plabic graph and the corresponding plabic tiling} \label{fig:Disconnected}
\end{figure}

We now, therefore, restrict to the case that $\I$ is a connected Grassmann necklace. We define $\pi(\I)$ to be the closed polygonal curve whose vertices are, in order, $\pi(I_1)$, $\pi(I_2)$, \dots, $\pi(I_n)$, $\pi(I_1)$.

\begin{proposition}
The curve $\pi(\I)$ is a simple closed curve.
\end{proposition}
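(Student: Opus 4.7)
The plan is to deduce the claim from Proposition~\ref{prop:embed} by identifying $\pi(\I)$ as the image of a cyclic subgraph of the CW-complex $\Sigma(\I)$. Two things must be established: the $n$ vertices of the polygon are pairwise distinct, and each consecutive pair $(I_i,I_{i+1})$ is an edge of $\Sigma(\I)$.

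For the vertices, the connectedness hypothesis together with the preceding lemma gives that $I_1,\dots,I_n$ are pairwise distinct, and Lemma~\ref{lem:IinM} says they are pairwise weakly separated. Since $e_{I_i}$ and $e_{I_j}$ have equal coordinate sum $k$, Lemma~\ref{lem:vecinj} then yields $\pi(I_i)\neq\pi(I_j)$ for $i\neq j$.

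For the edges, $|I_i\cap I_{i+1}|=k-1$, so both $\W(I_i\cap I_{i+1})\cap\I$ and $\B(I_i\cup I_{i+1})\cap\I$ are defined and contain $\{I_i,I_{i+1}\}$. If both equal $\{I_i,I_{i+1}\}$ then case (3) of the edge definition immediately yields the edge; otherwise at least one clique is nontrivial, and we must check that $(I_i,I_{i+1})$ lies on its boundary cycle. Once every consecutive pair is verified to be an edge, Proposition~\ref{prop:embed} embeds $\Sigma(\I)$ into $\RR^2$, and so the polygon $\pi(\I)$---being the image of the cyclic subgraph with edges $(I_i,I_{i+1})$---has distinct vertices and pairwise non-crossing segments, i.e., is a simple closed curve.

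The main obstacle is the boundary check. Concretely, writing $K=I_i\cap I_{i+1}$ and $\W(K)\cap\I=\{K\cup\{a_1\},\dots,K\cup\{a_r\}\}$, the task is to show that $i$ and $\pi(i)$ (the added elements for $I_i$ and $I_{i+1}$) are cyclically adjacent among the $a_m$. I would prove this by analyzing the set $J_K=\{j\in[n]:K\subseteq I_j\}$: the Grassmann necklace axioms imply that $J_K$ breaks into cyclic arcs, and on each arc the single element of $I_j\setminus K$ evolves by the Grassmann swap rule, tracing out the $a_m$ in cyclic order on $[n]\setminus K$; in particular, the swap at step $i$ identifies $i$ and $\pi(i)$ as cyclically adjacent added elements. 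The dual argument for black cliques handles case (2).
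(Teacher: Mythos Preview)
Your strategy---show that each $(I_i,I_{i+1})$ is an edge of $\Sigma(\I)$ and then invoke Proposition~\ref{prop:embed} to conclude that the cycle embeds---is valid and is a somewhat different packaging from the paper's argument. The paper does not prove that these segments are edges; instead it observes that the segment from $\pi(I_i)$ to $\pi(I_{i+1})$ lies inside some face of $\pi(\Sigma(\I))$ (since $I_i$ and $I_{i+1}$ are neighbors they both belong to $\W(I_i\cap I_{i+1})$ and to $\B(I_i\cup I_{i+1})$), so if two such segments crossed they would have to be chords of a common two-dimensional face with their four endpoints in a prescribed cyclic order, and then derives a contradiction from the $\leq_{i+1}$-minimality of $I_{i+1}$ in $\M$.

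Your route does work, but the boundary check is the entire content and your arc argument does not establish it. Even granting that $J_K$ decomposes into cyclic arcs and that the added element evolves monotonically along each arc, nothing you have said rules out an $a_m$ coming from a \emph{different} arc and landing in the open cyclic interval between $i$ and $\pi(i)$. The clean argument is exactly the minimality ingredient the paper uses: if $I_j=K\cup\{c\}$ is any element of the white clique then $I_{i+1}\leq_{i+1}I_j$ gives $K\cup\{\pi(i)\}\leq_{i+1}K\cup\{c\}$, hence $\pi(i)\leq_{i+1}c$; so every $a_m$ lies in the cyclic interval $[\pi(i),i]$ and $i,\pi(i)$ are cyclically adjacent among the $a_m$. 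Dually, $I_i\leq_i I_j$ gives $b\leq_i\pi(i)$ for every $I_j=L\setminus\{b\}$ in the black clique. Thus the two proofs use the same key fact; yours would front-load it to identify the polygon as a genuine subcomplex of $\Sigma(\I)$, while the paper's applies it only after a hypothetical crossing has been localized to a single face.
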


\begin{remark}
When $\M$ is $\binom{[n]}{k}$, the interior of $\pi(\I)$ is convex, but this is not true in general.
\end{remark}

\begin{proof}
Suppose for the sake of contradiction that the curve $\pi(\I)$ crosses through itself. 
First, by assumption, $I_i \neq I_j$ so, by Lemma~\ref{lem:vecinj}, the vertices $\pi(I_i)$ and $\pi(I_j)$ are disjoint.
We now rule out the case that line segments $(\pi(I_i), \pi(I_{i+1}))$ and $(\pi(I_j), \pi(I_{j+1}))$ cross in their interior. 
The case that $(\pi(I_i), \pi(I_{i+1}))$ passes through the vertex $\pi(I_j)$ is similar and easier.

The collection $\I$ is weakly separated by Lemma~\ref{lem:IinM}.
By Proposition~\ref{prop:embed}, the map $\pi$ embeds $\Sigma(\I)$ into $\RR^2$. 
The only way that the line segments  $(\pi(I_i), \pi(I_{i+1}))$ and $(\pi(I_j), \pi(I_{j+1}))$ could cross is if $I_i$, $I_j$, $I_{i+1}$ and $I_j$ are vertices of some two-dimensional face of $\Sigma(\I)$, arranged in that circular order.
(Or the reverse circular order, but then we could switch the indices $i$ and $j$.)
We consider the case that this face is of the form $\W(K)$, the case of $\B(L)$ is similar.
Let $(I_i, I_j, I_{i+1}, I_{j+1}) = (Ka, Kb, Kc, Kd)$; the elements $a$, $b$, $c$ and $d$ must be circularly ordered.

Now, since $i$ is the unique element of $I_i \setminus I_{i+1}$, we know that $a=i$. 
Similarly, $b=j$.
But then $Kb \leq_{i+1} Kc$, and the inequality is strict because $I_{i+1} \neq I_j$. This contradicts that $I_{i+1}$, which is $Kc$, is the $\leq_{i+1}$ minimal element of $\M$.
\end{proof}

The aim of the next several propositions is to establish:
\begin{proposition} \label{Winding}
Let $\I$ be a connected Grassmann necklace. Let $J$ be weakly separated from $\I$, but not an element of $\I$. 
Then $\pi(J)$ is in the interior of $\pi(\I)$ if and only if $J$ is in $\M$.
\end{proposition}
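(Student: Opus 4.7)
The plan is to prove this by computing the winding number of the simple closed curve $\pi(\I)$ around $\pi(J)$ and showing it is nonzero precisely when $J\in\M$.

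First I would verify that $\pi(J)$ does not lie on the curve $\pi(\I)$. The case $\pi(J)=\pi(I_i)$ is ruled out by Lemma~\ref{lem:vecinj} applied to $e_J$ and $e_{I_i}$: these are distinct vectors of common coordinate sum $k$ that are weakly separated by hypothesis. The case that $\pi(J)$ lies in the relative interior of an edge $[\pi(I_i),\pi(I_{i+1})]$ reduces, via Lemma~\ref{lem:vecinj} applied to $e_J$ and the convex combination $\lambda e_{I_i}+(1-\lambda)e_{I_{i+1}}$, to showing that these two vectors are weakly separated; this in turn follows from a short case analysis using the fact that $I_i$ and $I_{i+1}$ differ only in swapping $i$ for $\pi(i)$, together with the hypotheses $J \parallel I_i$ and $J \parallel I_{i+1}$. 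With $\pi(J)$ off the curve, the Jordan curve theorem reduces the proposition to showing that the winding number $W$ of $\pi(\I)$ around $\pi(J)$ is nonzero exactly when $J\in\M$.

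I would compute $W$ by a ray-casting argument: pick a generic direction $u \in S^1$ and count signed crossings of the ray $\pi(J) + \RR_{\geq 0}\,u$ with the edges of $\pi(\I)$. Each edge $[\pi(I_i),\pi(I_{i+1})]$ has step vector $v_{\pi(i)}-v_i$, and a choice of $u$ tailored to the cyclic intervals determined by $J$ should match each signed crossing to an index $i$ at which the comparison $I_i \leq_i J$ is ``active''. The signed sum should then telescope to $1$ precisely when $I_i \leq_i J$ holds for every $i$, which by definition means $J \in \M$.

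The main obstacle will be this last step: extracting the right correspondence between signed ray crossings and the inequalities $I_i \leq_i J$, and verifying that the signed sum collapses to the characteristic function of $\M$. Because $\pi(\I)$ need not be convex (as the remark following the statement emphasizes), one cannot describe its interior simply as an intersection of half-planes; the argument must track crossings individually and exploit both the geometry of the convex $n$-gon and the pairwise weak separation of the $I_i$ established in Lemma~\ref{lem:IinM}.
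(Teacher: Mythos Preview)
Your high-level strategy --- compute the winding number of $\pi(\I)$ about $\pi(J)$ --- is exactly the paper's strategy, and your first step (showing $\pi(J)$ is off the curve) is fine. But the execution you propose for the winding number itself is genuinely different from the paper's, and the part you flag as the ``main obstacle'' is precisely where your outline lacks a real idea.

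Your plan is to ray-cast in a generic direction and match signed crossings of edges $[\pi(I_i),\pi(I_{i+1})]$ to the inequalities $I_i\leq_i J$. You never say what it would mean for such an inequality to be ``active'', nor why the signed count should collapse to the indicator of $J\in\M$. I do not see a clean way to make that work: for a fixed ray direction $u$, whether the $i$-th edge crosses the ray depends on the positions of $\pi(I_i)$ and $\pi(I_{i+1})$ relative to $\pi(J)$, and these are governed by the full sets $I_i\setminus J$, $J\setminus I_i$ (not merely by whether $I_i\leq_i J$). There is no single $u$ that simultaneously encodes all $n$ cyclic orders $\leq_i$, so I think this route needs a substantial new ingredient before it becomes a proof.

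The paper avoids ray-casting entirely. Instead it tracks the \emph{direction} of $\phi(t)-\pi(J)$ continuously. For each $r$, weak separation gives disjoint cyclic intervals $[a_r,b_r]\supseteq I_r\setminus J$ and $[c_r,d_r]\supseteq J\setminus I_r$; a short case check (the paper's Lemma on intervals) shows that the intervals for $r$ and $r+1$ can be enclosed in a common pair $[a'_r,b'_r]$, $[c'_r,d'_r]$ of disjoint intervals. This lets one realize the direction of $\phi(t)-\pi(J)$ as $x(t)-y(t)$ for continuous paths $x(t),y(t)$ on a convex curve $\gamma$ through $v_1,\dots,v_n$, with $x(t)$ confined to the arc over $[a'_r,b'_r]$. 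The winding number is then just the winding number of $x(t)$ around $\gamma$. Now the positroid condition enters cleanly: if $J\in\M$ then $J>_r I_r$ forces $(r,a_r,b_r,c_r,d_r)$ to be cyclically ordered, so $x(t)$ always moves forward past $v_r$ and winds once; if $J\notin\M$ then some $I_s\leq_s J$ fails, and one argues that $x(t)$ never reaches $v_s$, so it winds zero times. The interval lemma is the technical core you are missing; once you have it, the two cases are short.
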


Let $\phi(t)$ be the parameterization of the simple closed curve $\pi(\I)$, with $\phi(r/n) = \pi(I_r)$, and $\phi(r/n + u) = (1-u)  \pi(I_r) + u \pi(I_{r+1})$ for $u \in (0,1/n)$.  
We consider the path $\phi(t) - \pi(J)$ in $\RR^2 \setminus (0,0)$. 
The point $\pi(J)$ is inside $\pi(\I)$ if and only if $\phi(t) - \pi(J)$ has winding number $1$; if not, it has winding number $0$.
We will prove proposition~\ref{Winding} by computing this winding number. 
We first introduce some notation.

By hypothesis, $I_r$ and $J$ are weakly separated. Let $[a_r, b_r]$ be the smallest cyclic interval containing all the elements of $I_r \setminus J$, and $[c_r, d_r]$ the smallest cyclic interval containing all the elements of $J \setminus I_r$. Since $J \neq I_r$, these intervals are nonempty.
So $a_r$, $b_r$, $c_r$ and $d_r$ are circularly ordered, with $b_r \neq c_r$ and $d_r \neq a_r$.
Now, 
$$\pi(I_r) - \pi(J) = \sum_{i \in I_r \setminus J} v_i -  \sum_{j \in J \setminus I_r} v_j.$$
Let $p_r$ be the centroid of $\{ v_i \}_{i \in I_r \setminus J}$, and $q_r$ the centroid of $\{ v_j \}_{j \in J \setminus I_r}$; since $I_r$ and $J$ are weakly separated, $p_r \neq q_r$.
So $\pi(I_r) - \pi(J)$ is parallel to $p_r - q_r$.

\begin{lemma}\label{lem:Intervals}
There are circular intervals $[a'_r, b'_r]$, and $[c'_r, d'_r]$, with $(a'_r, b'_r, c'_r, d'_r)$ circularly ordered, so that $[a_r, b_r]$ and $[a_{r+1}, b_{r+1}]$ are contained in $[a'_r, b'_r]$ and $[c_r, d_r]$ and $[c_{r+1}, d_{r+1}]$ are contained in $[c'_r, d'_r]$.
\end{lemma}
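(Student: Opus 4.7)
The plan is to reformulate the conclusion as a single cyclic separation property. Setting
\[
A_r := (I_r \cup I_{r+1}) \setminus J, \qquad B_r := J \setminus (I_r \cap I_{r+1}),
\]
the existence of the required intervals $[a'_r,b'_r]$ and $[c'_r,d'_r]$ is equivalent to the existence of a chord separating $A_r$ from $B_r$: the two closed arcs cut out by such a chord will automatically contain $[a_r,b_r] \cup [a_{r+1},b_{r+1}]$ and $[c_r,d_r] \cup [c_{r+1},d_{r+1}]$ respectively. I would then derive this from pairwise weak separation of the three sets $I_r$, $I_{r+1}$, $J$; the first two are weakly separated by Lemma~\ref{lem:IinM}, and each is weakly separated from $J$ by hypothesis.

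The trivial case $I_r = I_{r+1}$ is handled by taking primed equal to unprimed. Otherwise, write $y := r$, $z := \pi(r)$, $X := I_r \cap I_{r+1}$, so $I_r = X \cup \{y\}$, $I_{r+1} = X \cup \{z\}$, $y \neq z$, and for every $t \notin \{y,z\}$ one has $t \in I_r \iff t \in I_{r+1}$. In particular the symmetric differences of the sets $I_r \setminus J$, $I_{r+1}\setminus J$ and of $J \setminus I_r$, $J \setminus I_{r+1}$ are contained in $\{y,z\}$. Suppose for contradiction that there exist cyclically ordered $\alpha,\beta,\gamma,\delta$ with $\alpha,\gamma \in A_r$ and $\beta,\delta \in B_r$. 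I would split into cases according to which of these four elements lie in $\{y,z\}$.

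When neither of $\beta,\delta$ lies in $\{y,z\}$, both belong to $J \setminus I_r$ and to $J \setminus I_{r+1}$. If both $\alpha,\gamma \in X\setminus J$ (equivalently, neither is $y$ or $z$), they lie in $I_r\setminus J$ and the quadruple directly violates $I_r \parallel J$. The mixed subcase where exactly one of $\alpha,\gamma$ lies in $\{y,z\}$ is handled by observing that the elements of $X\setminus J$ always sit in both $I_r\setminus J$ and $I_{r+1}\setminus J$, so a suitable choice of $r' \in \{r,r+1\}$ gives $\alpha,\gamma \in I_{r'}\setminus J$ and violates $I_{r'} \parallel J$. When exactly one of $\beta,\delta$ lies in $\{y,z\}$, say $\delta = y$, then $y \in J$ forces $\alpha \neq y$, and a parallel argument places $\alpha,\gamma \in I_{r+1}\setminus J$ and $\beta,\delta \in J\setminus I_{r+1}$, contradicting $I_{r+1} \parallel J$; the case $\delta = z$ is analogous with $I_r$.

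The main obstacle is the remaining case $\{\beta,\delta\} = \{y,z\}$ (and the symmetric case $\{\alpha,\gamma\}=\{y,z\}$ inside the first bullet above); here no single pair $I_{r'} \parallel J$ suffices. Assume (by relabeling) that the cyclic order is $\alpha,z,\gamma,y$, so that $y,z \in J$ and $\alpha,\gamma \in X\setminus J$. Weak separation of $I_r$ and $J$ then forces $J\setminus I_r$ to lie entirely in the arc $(\alpha,\gamma)$ containing $z$, while weak separation of $I_{r+1}$ and $J$ forces $J\setminus I_{r+1}$ to lie in the complementary arc $(\gamma,\alpha)$ containing $y$. Since $J\setminus I_r$ and $J\setminus I_{r+1}$ agree off $\{y,z\}$, their common part must lie in the intersection of these two disjoint arcs, hence is empty; so $J\setminus I_r = \{z\}$ and $J\setminus I_{r+1} = \{y\}$, forcing $|I_r \setminus J| = 1$ and contradicting $\{\alpha,\gamma\} \subseteq I_r\setminus J$. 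The symmetric subcase is identical after swapping the roles of $A_r$ and $B_r$. This finishes every case, so $A_r$ and $B_r$ are cyclically separable and the required intervals exist.
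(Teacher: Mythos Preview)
Your proof is correct. The reformulation in terms of a chord separating $A_r=(I_r\cup I_{r+1})\setminus J$ from $B_r=J\setminus(I_r\cap I_{r+1})$ is sound: since $A_r=(I_r\setminus J)\cup(I_{r+1}\setminus J)$ and $B_r=(J\setminus I_r)\cup(J\setminus I_{r+1})$, two disjoint arcs covering $A_r$ and $B_r$ automatically contain the four minimal intervals as required (the endpoints $a_r,b_r$ land in the $A_r$-arc while $c_r$ does not, forcing $[a_r,b_r]$ into that arc). The case analysis on which of $\alpha,\beta,\gamma,\delta$ equal $y$ or $z$ goes through, including the delicate diagonal case $\{\beta,\delta\}=\{y,z\}$ where you squeeze the common part of $J\setminus I_r$ and $J\setminus I_{r+1}$ into disjoint arcs.

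The paper organizes the argument differently. It splits into four cases according to whether $r\in J$ and $\pi(r)\in J$, and in each case observes a containment between the unprimed data at $r$ and at $r+1$ that lets one write down the primed intervals directly: when neither lies in $J$ the $[c,d]$ intervals coincide; when both do the $[a,b]$ intervals coincide; in the mixed cases one has $I_r\setminus J\subseteq I_{r+1}\setminus J$ and $J\setminus I_r\subseteq J\setminus I_{r+1}$ (or the reverse), so one index dominates the other and the primed intervals can be taken to be the larger pair. This is shorter and constructive, but leans on ``the other three cases are similar.'' Your contradiction argument is longer but more uniform, and makes explicit the one genuinely nontrivial configuration (your ``main obstacle''), which the paper's sketch absorbs into the two easy mixed cases.
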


\begin{proof}
We have to consider four cases, depending on whether or not $r$ and $\pi(r)$ are in $J$. 
Let's consider the case that neither of them is. In this case, $[c_r, d_r] = [c_{r+1}, d_{r+1}]$ and we can take $[c'_r, d'_r] = [c_r, d_r]$. 
The intervals $[a_r, b_r]$ and $[a_{r+1}, b_{r+1}]$ are disjoint from $[c_r, d_r]$, and thus live in $(d_r, c_r)$. 
It is thus easy to see that there is a minimal subinterval of $(d_r, c_r)$ which contains $[a_r, b_r]$ and $[a_{r+1}, b_{r+1}]$; we take this interval to be $[a'_r, b'_r]$. 

The other three cases are similar to this one.
\end{proof}

For $t$ a real number between $r/n$ and $(r+1)/n$, define $p(t)$ by linear interpolation between $p_r$ and $p_{r+1}$. 
Similarly, define $q(t)$ by linear interpolation between $q_r$ and $q_{r+1}$. 
For $t$ between $r/n$ and $(r+1)/n$, the point $p(t)$ is in the convex hull of $\{ v_i \}_{i \in [a'_r, b'_r]}$ and $q(t)$ is in the convex hull of $\{ v_j \}_{j \in [c'_r, d'_r]}$. 
Because the intervals $[a'_r, b'_r]$ and $[c'_r, d'_r]$ are disjoint, we know that $p(t) \neq q(t)$.
The vector $\phi(t) - \pi(J)$ is a positive scalar multiple of $p(t) - q(t)$. 

Let $\gamma$ be a convex simple closed curve through the points $v_i$.
Let $x(t)$ and $y(t)$ be the points where the line through $p(t)$ and $q(t)$ meets $\gamma$, with $x(t)$ closer to $p(t)$.
So $x(t)$ lies on the part of $\gamma$ between $v_{a'_r}$ and $v_{b'_r}$; the point $y(t)$ lies on the part of $\gamma$ between $v_{c'_r}$ and $v_{d'_r}$.
Note that $x(t)$ and $y(t)$ are continuous functions of $t$.

It is geometrically clear that the paths $x(t)$ and $y(t)$, travelling around $\gamma$, have the same winding number as the path $x(t) - y(t)$ around $0$.
As observed above, the vector $x(t) - y(t)$ is a positive multiple of $\phi(t) - \pi(J)$. 
Thus, we are reduced to computing the winding number of $x(t)$ around $\gamma$.

\begin{proof}[Proof of Proposition~\ref{Winding}]
First, suppose that $J \in \M$.
This means that, for every $r$, we have $J \geq_r I_r$; since $J \not \in \I$, this inequality is in fact strict.
The indices $(r, a_r, b_r, c_r, d_r)$ are circularly ordered; the inequality between $b_r$ and $c_r$ are strict, while the others are weak.
So the interval $[a'_r, b'_r]$ can only contain $r$ at its left extreme.
For $t$ between $r/n$ and $(r+1)/n$, the point $x(t)$ lies in the interval $[a'_r, b'_r]$, so $x(t)$ has winding number $1$, as desired.

Now, suppose that $J \not \in M$.
Then, for some $s \in [n]$, we have $J \not \geq_{s} I_s$; by hypothesis, $J \parallel I_s$. 
%All inequalities in the rest of this proof refer to $\leq_s$.
So one of the following two cases must hold: 
\begin{enumerate}
\item the minimal element of $J \setminus I_s$ is less than the  minimal element of $I_s \setminus J$ 
or 
\item the maximal element of $I_s \setminus J$ is greater than the maximal element of $J \setminus I_s$.
\end{enumerate}
Here and in the rest of this proof, ``minimal", ``maximal", ``less than" and ``greater than" refer to the orders $\leq_s$.

We discuss the former case; the latter is similar.
We know that $I_s$ is minimal among the sets $I_r$.
So, for every $r$, the minimal element of  $J \setminus I_r$ must be less than the  minimal element of $I_r \setminus J$, in the $\leq_s$ order.

So, for every $r$, the interval $[a_r, b_r]$ does not contain $s$.
So $[a'_r, b'_r]$ also does not contain $s$, and we see that $x(t)$ is not $v_s$, for any $t$. 
So $x(t)$ has winding number $0$, as desired.
\end{proof}

We now apply our constructions to the case that $G$ is a plabic graph.

\begin{theorem} \label{thrm:dual}
Let $G$ be a bipartite plabic graph whose strand permutation is connected and let $\C=\f(G)$. Then $\Sigma(\C)$ is isomorphic to the dual complex to the planar graph $G$.
Also, $\pi(\Sigma(\f(G)))$ fills the region surrounded by $\pi(\I)$. 
\end{theorem}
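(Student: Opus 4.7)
The plan is to construct an explicit CW-isomorphism $\Phi$ from the dual complex of $G$ to $\Sigma(\C)$, and then transfer the fact that the dual complex of $G$ fills the disk $D$ to conclude the filling statement. At the level of $0$-cells, each face $F$ of $G$ has a label $\mathrm{lab}(F) \in \C$, and distinct faces have distinct labels by Proposition~\ref{prop:PartConverse}(2), so $\Phi$ is forced on vertices. I would then build $\Phi$ on $2$-cells as follows. At a white vertex $v$ of $G$ of degree $r$, exactly $r$ strands turn around at $v$ (each entering and exiting along $v$); label these boundary endpoints $a_1,\ldots,a_r$. By directly tracing the strand rules, the face of $G$ wedged between the $a_i$-strand and the $a_{i+1}$-strand at $v$ has label $Ka_i$ for a common $(k-1)$-set $K$, so the surrounding labels form the white clique $\W(K)$ and are cyclically ordered in the same rotational order as the $a_i$ around $v$. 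The analogous statement holds for black vertices and black cliques.

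For edges, any edge $e$ of $G$ lies between two faces $F,F'$ with $|\mathrm{lab}(F)\setminus\mathrm{lab}(F')| = |\mathrm{lab}(F')\setminus\mathrm{lab}(F)| = 1$ (since exactly two strands separate $F$ and $F'$ locally, and they swap roles across $e$); thus $\mathrm{lab}(F)$ neighbors $\mathrm{lab}(F')$. Because $G$ is bipartite, $e$ has one white end $w$ and one black end $b$, and the edge $(\mathrm{lab}(F),\mathrm{lab}(F'))$ appears in the boundary of both $\W(\mathrm{lab}(F)\cap\mathrm{lab}(F'))$ and $\B(\mathrm{lab}(F)\cup\mathrm{lab}(F'))$, hence is present in $\Sigma(\C)$.

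The main obstacle is showing that $\Phi$ is a bijection on $2$-cells, i.e.\ that \emph{every} nontrivial clique of $\C$ comes from a vertex of $G$, and that no extraneous edges appear in $\Sigma(\C)$. Suppose $\W(K)$ is nontrivial with at least three labels $Ka_1,Ka_2,Ka_3 \in \C$. The faces of $G$ whose labels contain $K$ are exactly the faces lying to the left of every strand with index in $K$; let $R_K$ denote their union. I would analyze the arrangement of the remaining strands (those with index not in $K$) inside $R_K$: by reducedness, in particular condition~(\ref{OppositeOrder}), no two of these strands can cross twice, and the strand indexed by $a_i$ borders $R_K$ along an arc. The combinatorial fact that three such faces share a common $K$ forces the relevant strands to meet at a single point of $G$, which must be a white vertex (since at a black vertex the adjacent faces share a common $(k{+}1)$-set, not a $(k{-}1)$-set). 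A parallel argument handles black cliques, and similar local considerations rule out edges of $\Sigma(\C)$ that do not cross an edge of $G$.

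Combining Steps 1--3 gives that $\Phi$ is a CW-isomorphism, proving the first claim. For the second claim, note that the dual complex of $G$ embeds in the disk $D$ so as to fill $D$, with boundary $0$-cells being the boundary faces of $G$; by Proposition~\ref{prop:PartConverse}(1) these boundary faces are labeled by $\I$. Transporting via $\Phi$, the complex $\Sigma(\C)$ is homeomorphic to a closed disk with boundary cycle visiting the vertices $I_1,I_2,\ldots,I_n,I_1$ in order. Proposition~\ref{prop:embed} says that $\pi$ embeds $\Sigma(\C)$ in $\RR^2$, and under $\pi$ the boundary cycle maps exactly to the polygonal curve $\pi(\I)$. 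Since a planar embedding of a closed disk whose boundary is the simple closed curve $\pi(\I)$ must have image equal to the closed region bounded by $\pi(\I)$, the filling claim follows. Remark~\ref{rem:circ}, which ensures the local rotational orders around white and black faces in $\Sigma(\C)$ agree with those around the corresponding vertices of $G$, is what guarantees that the embedding is orientation-preserving and that $\pi(\Sigma(\C))$ really covers, rather than folds over, the interior of $\pi(\I)$.
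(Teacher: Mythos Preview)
Your strategy differs from the paper's in an essential way, and the difference is exactly where your argument is weakest.

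The paper does \emph{not} attempt a direct cell-by-cell bijection between $D(G)$ and $\Sigma(\C)$. Instead it defines a piecewise-linear map $\psi:D(G)\to\RR^2$ sending each dual vertex to $\pi(J)$ (where $J$ is the face label), observes that every face $F$ of $D(G)$ lands inside a face of $\pi(\Sigma(\C))$ of the same color, and then argues topologically: $\psi$ sends $\partial D(G)$ to $\pi(\I)$ and is orientation-preserving on each $2$-cell, so $\psi$ is a homeomorphism onto the region $R$ bounded by $\pi(\I)$. This gives $\psi(D(G))=R\subseteq\pi(\Sigma(\C))$, and Proposition~\ref{Winding} gives the reverse containment. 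Now both $\psi(D(G))$ and $\pi(\Sigma(\C))$ are CW-structures on the same region $R$, and the paper shows they coincide by a short coloring argument: if some black face $\pi(\B(L))$ of $\Sigma(\C)$ strictly contained the black face $\psi(F)$ of $D(G)$ sitting inside it, then a neighboring white face $\psi(F')$ of $D(G)$ would overlap $\pi(\B(L))$; but $\psi(F')$ lies in some white face $\pi(\W(K))$, so $\pi(\B(L))$ and $\pi(\W(K))$ would overlap, contradicting Proposition~\ref{prop:embed}. Thus each face of $\Sigma(\C)$ is exactly one face of $D(G)$.

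Your approach instead tries to prove directly, by strand combinatorics, that every nontrivial clique of $\C$ arises from a single internal vertex of $G$. This is the genuine gap. The sentence ``the combinatorial fact that three such faces share a common $K$ forces the relevant strands to meet at a single point of $G$'' is not justified: you have not ruled out that two distinct white vertices of $G$ could have the same underlying $(k{-}1)$-set $K$, nor that the faces in $\W(K)$ might be spread across several vertices of $G$. Your proposed analysis of the region $R_K$ and the arrangement of the non-$K$ strands inside it is only a sketch; making it rigorous would require essentially reproving, by hand, what the paper obtains for free from the coloring argument above. The paper's route is not merely different but strictly easier here: by comparing the two tilings of the \emph{same} planar region and exploiting that adjacent $D(G)$-faces have opposite colors, it never needs to locate a specific vertex of $G$ for a given clique.

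A smaller point: your filling argument assumes the isomorphism $\Sigma(\C)\cong D(G)$ is already established and then invokes a Jordan--Schoenflies-type statement. The paper reverses the order, establishing $\psi(D(G))=R=\pi(\Sigma(\C))$ first and using this equality as the tool to compare CW-structures; in particular it uses Proposition~\ref{Winding} to pin down $\pi(\Sigma(\C))\subseteq R$, which your argument does not invoke.
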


\begin{remark}
If $G$ is a reduced plabic graph which may not be bipartite, let $G'$ be the graph obtained by contracting edges between vertices of the same color, as discussed in Remark~\ref{rem:Contract}. 
Then the faces of $G$ and $G'$ are in bijection, and they have the same labels. 
Letting $\C$ be this common set of labels, it then follows from Theorem~\ref{thrm:dual} that $\Sigma(\C)$ is dual to $G'$. 
\end{remark}

\begin{remark}
If $G$ is a bipartite plabic graph whose strand permutation is \emph{not connected}, then $\Sigma(\C)$ is a union of $\Sigma(\C')$'s for various smaller collections $\C'$, by Proposition~\ref{GlueAtPoint}. 
Each of these collections comes from a connected plabic graph $G'$, to which Theorem~\ref{thrm:dual} applies. 
\end{remark}

\begin{proof}[Proof of Theorem~\ref{thrm:dual}]
By Proposition~\ref{prop:plagws}, $\C$ is weakly separated, so it makes sense to define $\Sigma(\C)$. 
By definition, the vertices of $\Sigma(\C)$ correspond to the faces of $G$, with a separate vertex for each face of $G$ by Proposition~\ref{prop:PartConverse}.
By Propositions~\ref{prop:embed} and~\ref{Winding}, we know that $\Sigma(\C)$ embeds into $\RR^2$, with the boundary vertices corresponding to $\I$.
Let $R$ be the region in $\RR^2$ surrounded by the boundary of $\pi(\Sigma(\C))$.

Let $D(G)$ be the dual to the planar graph $G$. So $D(G)$ is a two-dimension CW-complex, homemorphic to a disc, with vertices labeled by $\C$, and with boundary labeled by $\I$. We now describe a map $\psi$ of $D(G)$ into $\RR^2$:
For a vertex $v$ of $D(G)$, if $J$ is the label of the dual face of $G$, then $\psi(v)$ is $\pi(J)$. 
For each edge $e$ of $D(G)$, connecting $u$ and $v$, let $\psi(e)$ be a line segment from $\psi(u)$ to $\psi(v)$.

We now discuss faces of $D(G)$.
Let $F$ be a face of $D(G)$, dual to a vertex $v$ of $G$. 
Suppose that $v$ is black; the case where $v$ is white is similar. 
Let the edges ending at $v$ cross the strands $i_1$, $i_2$, \dots, $i_r$.
So the faces bordering $v$ have labels of the form $L \setminus i_1$, $L \setminus i_2$, \dots, $L \setminus i_r$ for some $L$.
In particular, we see that the corresponding vertices of $D(G)$ are in $\B(L)$. 
The face $\B(L)$ is embedded in $\RR^2$ as a convex polygon.
Moreover, the vertices of $F$ occur in cyclic order among those of $\B(L)$.
So $\psi$ embeds $\partial F$ as a convex polygon in $\RR^2$, and we extend $\psi$ to take the interior of $F$ to the interior of that polygon.
Note that we have shown that, for every face $F$ of $D(G)$, the image $\psi(F)$ lies within a face of $\Sigma(\C)$ of the same color. 

The map $\psi$ takes the boundary of $D(G)$ to $\partial R$.
For every face $F$ of $D(G)$, the face $\psi(F)$ is embedded in $\RR^2$ with the correct orientation. 
Since $D(G)$ is homeomorphic to a disc, these points force $\psi$ to be a homeomorphism onto $R$.

Every face of $\psi(D(G))$ lies in a face of $\pi(\Sigma(\C))$, so $\pi(\Sigma(\C)) \supseteq \psi(D(G)) = R$. By Proposition~\ref{Winding}, we also have $\pi(\Sigma(\C)) \subseteq R$, so we conclude that $\pi(\Sigma(\C)) = \psi(D(G)) = R$.
We now must show that the two CW-structures on this subset of $\RR^2$ coincide.

Let $F$ be any face of $D(G)$. 
We consider the case that $F$ is black; the white case is similar.
As shown above, there is a black face $\pi(\B(L))$ of $\Sigma(\C)$ which contains $\psi(F)$.
Suppose, for the sake of contradiction, that some point of $\pi(\B(L))$ is not in $\psi(\Sigma(\C))$. 
Since $D(G)$ covers all of $R$, there must be some face $F'$ of $D(G)$, adjacent to $F$, such that $\psi(F')$ overlaps $\pi(\B(L))$.
But $F'$, being a neighbor of $F$, is white and thus lies in some $\pi(\W(K))$. So $\pi(\B(L))$ and $\pi(\W(K))$ overlap, a contradiction. 
We conclude that every face of $\psi(D(G))$ is also a face of $\pi(\Sigma(\C))$. 
Since $\psi$ is injective, and has image $R$, we see that every face of $\pi(\Sigma(\C))$ is precisely one face of $\psi(D(G))$. 

We now have a bijection between the faces of $\Sigma(\C)$ and of $D(G)$. 
In both $\pi(\Sigma(\C))$ and $\psi(D(G))$, a face is the convex hull of its vertices, so the corresponding faces of $\Sigma(\C)$ and $D(G)$ must have the same vertices and the same edges. 
So $\pi^{-1} \circ \psi$ is a cell-by-cell homeomorphism between the simplicial complexes $\Sigma(\C)$ and $D(G)$.
\end{proof}

\begin{remark} 
The most technical part of this paper is establishing that, when $\C$ is a maximal weakly separated collection which is not yet known to be of the form $\f(G)$, then $\pi(\Sigma(\f(G)))$ fills the interior of the region surrounded by $\pi(\I)$. This task is accomplished in Section~\ref{sec:final}.
\end{remark}

We now show that $\f(G)$ is always maximal.

\begin{theorem} \label{thrm:IsMaximal}
Let $G$ be a reduced plabic graph, with boundary $\I$. Then $\f(\G)$ is a maximal weakly separated collection in $\M_{\I}$.
\end{theorem}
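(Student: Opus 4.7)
My plan is to combine the winding number calculation of Proposition~\ref{Winding} with the region-filling statement of Theorem~\ref{thrm:dual}. By Corollary~\ref{cor:PartConverse}, $\f(G)$ is already a weakly separated collection in $\M_{\I}$, so only maximality needs proof. Suppose for contradiction that $J \in \M_{\I}$ is weakly separated from every element of $\f(G)$ but $J \notin \f(G)$; in particular $J \notin \I$ since $\I \subseteq \f(G)$. Using Remark~\ref{rem:Contract} I replace $G$ by its bipartite contraction (leaving $\f(G)$ unchanged), and using Proposition~\ref{GlueAtPoint} together with Proposition~\ref{SplittingSummary} I reduce to the case where $\I$ is connected.

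By Proposition~\ref{Winding}, $\pi(J)$ lies in the interior of the region $R$ enclosed by $\pi(\I)$, and by Theorem~\ref{thrm:dual} we have $R = \pi(\Sigma(\f(G)))$. So $\pi(J)$ is contained in some closed cell of $\Sigma(\f(G))$, and I split into three cases according to the dimension of this cell. If $\pi(J) = \pi(I)$ for some vertex $I \in \f(G)$, then $J$ and $I$ are weakly separated and have the same coordinate sum, so Lemma~\ref{lem:vecinj} forces $J = I$, contradicting $J \notin \f(G)$.

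Suppose next that $\pi(J)$ is interior to a two-dimensional face of $\Sigma(\f(G))$, which I will take to be a white face $\W(K) = \{Ka_1, \dots, Ka_r\}$; the black case is dual. If $K \subseteq J$, then $J = K \cup \{j\}$ with $j \notin \{a_1, \dots, a_r\}$ (else $J \in \f(G)$), and $\pi(J) = \pi(K) + v_j$ would place $v_j$ in the interior of $\mathrm{conv}\{v_{a_i}\}$, contradicting the extremality of $v_j$ among the vertices of the convex $n$-gon. Otherwise $K \not\subseteq J$, in which case $\W(K)$ remains a nontrivial clique in the weakly separated collection $\{J\} \cup \W(K)$, hence a face of $\Sigma(\{J\} \cup \W(K))$. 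Writing $\pi(J) - \pi(K) = \sum c_i v_{a_i}$ with $c_i > 0$ summing to $1$, Lemma~\ref{WriteThis} applied to the vertex $\{J\}$ and the face $\W(K)$, combined with Lemma~\ref{lem:vecinj}, yields $e_J = \sum c_i e_{Ka_i}$; reading off the $K$-coordinates forces $K \subseteq J$, a contradiction.

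The last case is that $\pi(J)$ is interior to an edge $(I_1, I_2)$ of $\Sigma(\f(G))$. Setting $K = I_1 \cap I_2$ and $L = I_1 \cup I_2$, the possibilities $K \subseteq J$ or $J \subseteq L$ again reduce to extremality of the $v_i$'s, forcing $J \in \{I_1, I_2\}$. Otherwise $\W(K) = \B(L) = \{I_1, I_2\}$ inside the three-element weakly separated collection $\{J, I_1, I_2\}$, so clause (3) of the edge definition makes $(I_1, I_2)$ an edge of $\Sigma(\{J, I_1, I_2\})$; Lemma~\ref{WriteThis} applied to this edge and the vertex $\{J\}$, together with Lemma~\ref{lem:vecinj}, once more forces $K \subseteq J$, a contradiction. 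The main subtlety is recognizing that the third clause of the edge definition is exactly what makes Lemma~\ref{WriteThis} applicable in the edge case; once this is in hand, every case collapses to the extreme-point property of the $v_i$ and the injectivity supplied by Lemma~\ref{lem:vecinj}.
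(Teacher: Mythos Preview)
Your proof is correct and follows essentially the same strategy as the paper's: reduce to the connected case, use Proposition~\ref{Winding} and Theorem~\ref{thrm:dual} to place $\pi(J)$ inside $\pi(\Sigma(\f(G)))$, and derive a contradiction from the injectivity of $\pi$ on plabic tilings. The paper compresses the last step into a single line, asserting that $\pi$ fails to be injective on $\Sigma(\f(G)\cup\{J\})$ and invoking Proposition~\ref{prop:embed}; your case analysis (vertex, edge, $2$-face) together with Lemmas~\ref{WriteThis} and~\ref{lem:vecinj} is exactly what is needed to justify that line, since adding $J$ can alter the clique structure. Your explicit reduction to the bipartite case and your use of clause~(3) of the edge definition are likewise details the paper leaves implicit.
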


\begin{proof}
We first reduce to the case that $\I$ is connected. If not, let $I_{i} = I_j$. We resume the notations of section~\ref{Connected}. The face of $G$ labeled $I_i$ touches the boundary of the disc at two points; both between boundary vertices $i$ and $i+1$ and between boundary vertices $j$ and $j+1$. So this face disconnects the graph $G$ into two graphs, with vertices in $I^1$ and the other with vertices in $I^2$. Call these plabic graphs $G^1$ and $G^2$. It is an easy exercise that each $G^j$ is a reduced plabic graph, with boundary $\I^j$. By induction on $n$, we know that $\f(G^j)$ is a maximal weakly separated collection in $\M_{\I^j}$. 

Let $J$ in $\M_{\I}$ be weakly separated from $\f(G)$.
Since $\f(G) \cup \{ J \}$ is weakly separated, by Proposition~\ref{SplittingSummary}, $J$ must either be of the form $J^1 \cup I^2$ or $I^1 \cup J^2$, where $J^r$ is weakly separated from $\f(G^r)$ and in $\M_{\I^r}$. 
But, by the maximality of $\f(G^r)$, this implies that $J^r$ is in $\f(G^r)$. Then $J$ is the label of the corresponding face of $\f(G)$.
We have shown that any $J$ in $\M_{\I}$ which is weakly separated from $\f(G)$ must lie in $\f(G)$, so $\f(G)$ is maximal in $\M_{\I}$. 

We now assume that $\I$ is connected.
Let $J$ in $\M_{\I}$ be weakly separated from $\f(G)$ and assume, for the sake of contradiction, that $J \not \in \f(G)$.
We showed above that $\pi(\Sigma(\f(G)))$ fills the interior of the region surrounded by $\pi(\I)$. 
By Proposition~\ref{Winding}, $\pi(J)$ is in this region, so $\pi(J)$ lands on some point of $\pi(\Sigma(\f(G)))$.
In particular, $\pi$ is not injective on $\Sigma(\f(G) \cup \{ J \})$, contradicting the assumption that $J$ is weakly separated from $\C$.
This contradiction shows that $\f(G)$ is maximal in $\M_{\I}$.
\end{proof}

\section{Proof of Lemma~\ref{lem:key}} \label{sec:key}

Our aim in this section is to prove the following technical lemma. 
The reader may wish to skip this proof on first reading, and go on to see how it is used in the proof of Theorem~\ref{thm:WSisPG}.

\begin{lemma}\label{lem:key}
Let $\C$ be a weakly separated collection of $\binom{[n]}{k}$. Let  $E \subset \binom{[n]}{k-1}$ and $x \not = y \in [n]$ be such that $E \cup \{x\}$ and $E \cup \{y\}$ are in $\C$. If $E$ is not the interval $(x,y)$, then there is a set of one of the following two types which is weakly separated from $\C$:
\begin{enumerate}
\item $M^a := E \cup \{a\}$ with $a \in (x,y) \setminus E$ or
\item $N^b := E \cup \{x,y\} \setminus \{b\}$ with $b \in (y,x) \cap E$.
\end{enumerate}
\end{lemma}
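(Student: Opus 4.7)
My plan is to argue by contradiction: suppose that no set $M^a$ (for $a \in (x,y) \setminus E$) and no set $N^b$ (for $b \in (y,x) \cap E$) is weakly separated from $\C$. For each such candidate I will pick a witness set in $\C$ exhibiting the failure, and then show the collection of witnesses forces a configuration inconsistent with weak separation inside $\C$.

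Concretely, for each $a \in (x,y) \setminus E$ I choose $T_a \in \C$ together with a cyclically ordered obstruction $(p_a, q_a, p'_a, q'_a)$, where $p_a, p'_a \in M^a \setminus T_a$ and $q_a, q'_a \in T_a \setminus M^a$. Since $M^a$ differs from $E \cup \{x\}$ only in that $a$ replaces $x$, and since $T_a \parallel E \cup \{x\}$, an obstruction to $M^a \parallel T_a$ that involved neither $a$ nor $x$ would transfer verbatim to witness $T_a \not\parallel E \cup \{x\}$. The parallel argument using $E \cup \{y\}$ forces the obstruction to also touch $\{a, y\}$. Combining these constraints, I can arrange that $p_a = a$ and that $q_a$ or $q'_a$ equals $x$ or $y$. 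An entirely analogous analysis applies to witnesses $U_b$ for each $N^b$, using that $b \in E \cap (y,x)$ is the ``moving element''.

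These forced positions pin down where the remaining obstruction coordinates must lie (in $E$, in $(x,y) \setminus E$, or at $x$ or $y$). The final step is to pick extremal candidates---such as the element of $(x,y) \setminus E$ closest to $y$, or the element of $(y,x) \cap E$ closest to $x$, whichever side is nonempty by the hypothesis $E \ne (x,y)$---and combine the corresponding witness constraints. The combined constraints will exhibit a cyclically ordered 4-tuple distributed between two witness sets in $\C$ in a way that violates the weak separation of those two witnesses, yielding the contradiction.

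The main obstacle will be the case analysis in the middle step: depending on which of $a$, $x$, $y$ lie in $T_a$, the obstruction patterns take different forms, and I must track the location of each obstruction element through all of these cases. Handling the boundary situations---where, say, $(x,y) \setminus E$ or $(y,x) \cap E$ is very small, so there are almost no alternative candidates to swap to---is where the extremality argument in the final step is most delicate, and will require a direct verification rather than a single uniform argument.
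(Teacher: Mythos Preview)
Your opening moves are correct and match the paper's: set up a contradiction, and for each candidate $M^a$ analyze a witness $T_a\in\C$ with $T_a\not\parallel M^a$. Your observation that the obstruction must involve $a$ (since $T_a\parallel Ex$ and $T_a\parallel Ey$) is right; the paper makes this precise as follows: there exist $\ell,r\in T_a\setminus E$ with $x\le_x \ell <_x a <_x r\le_x y$, with $(\ell,r)\cap(T_a\setminus E)=\emptyset$ and $(r,\ell)\cap(E\setminus T_a)\neq\emptyset$. The dual analysis for $N^b$ is analogous.

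The gap is in your final step. Picking a single extremal $a$ (e.g.\ the element of $(x,y)\setminus E$ nearest $y$) and a single extremal $b$, and hoping their two witnesses conflict, does not work. From the extremal $a$ you do get $r=y\in T_a$, but you learn nothing about whether $x\in T_a$, nor about $T_a\cap(y,x)$; and there is no reason the witness for that $a$ should be comparable to the witness for an extremal $b$. A single witness only carries information local to the interval $(\ell,r)$ around $a$; to force a witness that simultaneously contains $x$ \emph{and} $y$ \emph{and} misses some element of $E\cap(y,x)$ you must propagate constraints across the whole arc $(x,y)$.

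The paper does this with substantially more machinery than your outline suggests. First it chooses a minimal \emph{covering} family of witnesses $(J_m,\ell_m,r_m)_{m=1}^q$ whose intervals $(\ell_m,r_m)$ overlap and together cover $(x,y)\setminus E$, with $\ell_1=x$ and $r_q=y$. It then introduces a carefully designed total order $\sqsubset$ on $\binom{[n]}{k}$ (first by $|{\cdot}\cap(x,y)|$, then by $|{\cdot}\cap(y,x)|$, then lexicographically) and proves inductively that the $\sqsubset$-minimal $J_m$ among $J_i,\dots,J_j$ is an \emph{$(i,j)$-snake}: it contains all of $\ell_i,\dots,\ell_m,r_m,\dots,r_j$ and still omits something from $E\cap(r_j,\ell_i)$. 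Applied to $(i,j)=(1,q)$ this finally produces a witness containing both $x$ and $y$ and missing an element of $E\cap(y,x)$. The contradiction then comes not from comparing two specific witnesses by hand, but from showing that the $\sqsubset$-minimal witness $J_\mu$ on the $M$-side and the dual $\sqsubset$-minimal witness $J'_{\mu'}$ on the $N$-side satisfy both $J_\mu\sqsubset J'_{\mu'}$ and $J'_{\mu'}\sqsubset J_\mu$.

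In short: your framework is right, but the ``pick extremal candidates and compare two witnesses'' plan is not the missing idea. You need (i) a covering chain of overlapping witnesses, (ii) a bespoke total order on $\binom{[n]}{k}$ compatible with weak separation across the chord $\{x,y\}$, and (iii) an inductive (snake) argument showing the minimal witness in that order absorbs all the $\ell$'s and $r$'s. Without these, the case analysis you anticipate will not close.
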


Note that this lemma is taken to itself under the symmetry which replaces every subset of $[n]$ by its complement and switches $x$ and $y$. We call this symmetry \newword{dualization}.

%
%Our proof is by contradiction. 
%So, from now on, we are operating under:
%\begin{Assumption} \label{Falsehood}
%There is a maximal weakly separated collection $\C$ that does not contain any $M^a$ or $N^b$. 
%\end{Assumption}

The condition that $E \neq (x,y)$ means that either there exists some $a \in (x,y) \setminus E$ or there exists some $b \in (y,x) \cap E$ (or both). 
%We break symmetry and make the assumption:
%\begin{Assumption} \label{BreakSym}
%There is some $a$ in $(x,y) \setminus E$.
%\end{Assumption}

Consider under what circumstances a set of the form $M^a$ might not be weakly separated from $\C$.
In order for this to happen, there must be some $J$ in $\C$ such that $J \not \parallel M^a$.
The next lemma studies the properties of such a $J$.

\begin{lemma} \label{WitnessDescription}
Let $E$, $x$ and $y$ be as above, and let $a \in (x,y) \setminus E$. 
Suppose that $J \parallel Ex$, $J \parallel Ey$ but $J \not \parallel Ea$. 
%Then $a \not \in J$. There are $\ell$ and $r \in J \setminus E$ such that $x \leq_x \ell <_x a <_x r \leq_x y$, we have $(\ell,r) \cap (J \setminus E) = \emptyset$ and $(r, \ell) \cap (E \setminus J)  \neq \emptyset$.
Then there are elements $\ell$ and $r$ such that:
\begin{enumerate}
\item $\ell$ and $r$ are in $J \setminus E$
\item $x \leq_x \ell <_x r \leq_x y$
\item $(r, \ell) \cap (E \setminus J) \neq \emptyset$.
\item $(\ell, r) \cap (J \setminus E) = \emptyset$.
\item $a \in (\ell, r)$
\end{enumerate}
\end{lemma}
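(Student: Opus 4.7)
The plan is to extract the cyclic obstruction to $J \parallel Ea$, use $J \parallel Ex$ and $J \parallel Ey$ to constrain its shape, and then take $\ell, r$ to be the $F$-elements of $[x,y]$ immediately flanking $a$, where $F := J \setminus E$ and $G := E \setminus J$.

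I will first show that in any bad cyclic quadruple $(p_1, q_1, p_2, q_2)$ for $J \not\parallel Ea$ (with $p_i \in F$, $q_i \in G \cup \{a\}$) one of $q_1, q_2$ must equal $a$, and also that $a \notin J$. Suppose to the contrary $q_1, q_2 \in G$. Comparing with $J \parallel Ex$ and $J \parallel Ey$, any $p_i$ distinct from $x$ or $y$ yields a bad quadruple violating one of those separations, forcing $\{p_1, p_2\} = \{x, y\}$. Then $q_1, q_2$ straddle $\{x, y\}$ cyclically. For any further $z \in F \setminus \{x, y\}$, a case check on which of the four arcs cut by $(x, q_1, y, q_2)$ contains $z$ produces an alternating cyclic quadruple from $\{x, z, q_1, q_2\}$ or $\{y, z, q_1, q_2\}$ that violates $J \parallel Ey$ or $J \parallel Ex$. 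Hence $F = \{x, y\}$, contradicting $|F| = |G| + 1 \geq 3$. Taking $z = a$ also rules out $a \in J$. So the obstruction has the canonical form $(p_1, q_1, p_2, a)$ cyclically ordered with $p_1, p_2 \in F$ and $q_1 \in G$, and $a \notin J$.

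Partition the remaining points of $[n]$ into four open cyclic arcs $A_1 = (p_1, q_1)$, $A_2 = (q_1, p_2)$, $A_3 = (p_2, a)$, $A_4 = (a, p_1)$. I locate $x$ and $y$ by the following observations: (i) $x \notin A_4$, because if $x \notin J$ the tuple $(p_1, q_1, p_2, x)$ witnesses $J \not\parallel Ex$, and if $x \in J$ then $q_1 \in (p_1, p_2)$ cyclically prevents any chord from separating $\{p_1, p_2\} \subseteq F \setminus \{x\}$ from $q_1 \in G$; (ii) by the same tuple argument, $x \in A_3$ forces $x \in J$ (so $x \in F$), and $y \in A_4$ forces $y \in J$ (so $y \in F$).

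Finally, define
\[
\ell := \max_{<_x}\bigl(F \cap [x, a)\bigr), \qquad r := \min_{<_x}\bigl(F \cap (a, y]\bigr).
\]
Existence: $p_2 \in F \cap [x, a)$ unless $x \in A_3$, where (ii) supplies $x \in F$ itself; symmetrically, $p_1 \in F \cap (a, y]$ unless $y \in A_4$, where (ii) supplies $y \in F$. Properties (1), (2), (4), (5) of the lemma are immediate from extremality, since $\ell <_x a <_x r$ and no element of $F$ lies strictly between $\ell$ and $r$ in $<_x$. For (3), I verify $q_1 \in (r, \ell)$ by splitting on $x$: when $x \in \{p_1\} \cup A_1$, one has $q_1 <_x p_2 \leq_x \ell$ in the $<_x$-order, so $q_1 \in [x, \ell) \subseteq (r, \ell)$; when $x \in A_2 \cup \{p_2\} \cup A_3$, one has $p_1 <_x q_1$ and $r \leq_x p_1$, so $q_1 >_x r$ and $q_1 \notin [\ell, r]$, i.e., $q_1 \in (r, \ell)$. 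The main obstacle is the first step: ruling out any bad pattern that avoids $a$ requires the case-by-case chord/quadruple analysis producing an alternating tuple from $\{x, z, q_1, q_2\}$ or $\{y, z, q_1, q_2\}$. Once the obstruction is forced into the canonical shape $(p_1, q_1, p_2, a)$, the rest is routine bookkeeping with the cyclic order.
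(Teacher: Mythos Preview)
Your overall strategy is sound and genuinely different from the paper's: the paper invokes Lemma~\ref{WriteThis}, applying vector weak separation to $e_J$ versus $e_E + \tfrac{1}{2}(e_x+e_y)$, which handles all positions of $x,y$ at once without casework. Your elementary cyclic-quadruple analysis avoids that machinery at the cost of more cases. The first paragraph (forcing the obstruction into the shape $(p_1,q_1,p_2,a)$ and showing $a\notin J$) is correct, as is (ii).

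However, your claim (i) is false, and the argument you give for it is simply wrong. Three cyclically placed points $p_1,q_1,p_2$ with $p_1,p_2\in F\setminus\{x\}$ and $q_1\in G$ do \emph{not} obstruct chord separation: a chord whose endpoints lie just to either side of $q_1$ separates $\{q_1\}$ from $\{p_1,p_2\}$. Concretely, take $n=8$, $E=\{2,3,4\}$, $x=8$, $y=7$, $a=6$, $J=\{1,5,7,8\}$. Then $J\parallel Ex$, $J\parallel Ey$, $J\not\parallel Ea$ with canonical quadruple $(p_1,q_1,p_2,a)=(1,3,5,6)$, and $x=8\in A_4=(6,1)$ while $x\in J$.

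This matters because your verification of (3) silently relies on (i): your case split ``$x\in\{p_1\}\cup A_1$'' versus ``$x\in A_2\cup\{p_2\}\cup A_3$'' omits $x\in A_4$. The fix is easy and local. First, observe that $a\in(x,y)$ forces $y\in(a,x)$; hence if $x\in A_4$ then $y\in(a,x)\subset A_4$ as well, and (ii) gives $y\in F$ (so existence of $r$ is fine). Second, simply absorb $A_4$ into your first case: when $x\in A_4\cup\{p_1\}\cup A_1=(a,q_1)$, the $<_x$ order reads $x<_x\cdots<_x q_1<_x\cdots<_x p_2<_x\cdots<_x a$, so $q_1<_x p_2\leq_x \ell$ exactly as before, giving $q_1\in[x,\ell)\subseteq(r,\ell)$. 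With that correction, your proof goes through; you should delete (i) entirely rather than try to salvage it.
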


\begin{proof}
Note that $J$ is a vertex of $\Sigma(\C)$, and $(Ex, Ey)$ is contained in a face of  $\Sigma(\C)$. (It may or may not be an edge.) 
Then, by Lemma~\ref{WriteThis}, the vectors $e_J$ and $e_E + (1/2) \left( e_x + e_y \right)$ are weakly separated.
By hypothesis, $e_J$ and $e_E + e_a$ are not weakly separated.
The only coordinates to change between $e_J - \left( e_E + (1/2) \left( e_x + e_y \right) \right)$ and $e_J -e_E - e_a$ are in positions $x$, $y$ and $a$. 
Coordinates $x$ and $y$ increase by $1/2$ from half integers to integers. In particular, they cannot change from nonnegative to negative or nonpositive to positive.
So, the failure of weak separation between $J$ and $Ea$ must be attributable to coordinate $a$.
We must have $a \not \in J$, and we must have $a \in (\ell, r)$ for some $\ell$ and $r \in J \setminus E$ such that $(r, \ell) \cap (E \setminus J) \neq \emptyset$.
Moreover, we choose to take the interval $(\ell, r)$ to be of minimal length, subject to the conditions that $a \in (\ell, r)$ and $l$ and $r \in J \setminus E$. 
This ensures that  $(\ell,r) \cap (J \setminus E) = \emptyset$.

We are left to check that $\ell$ and $r$ are in $[x,y]$.
We check that $\ell$ is; the case of $r$ is similar.
Suppose, instead, that $\ell \in (y,x)$.
If $x \in J$ then $(x,r)$ would be closer to $a$ then $(\ell,r)$, contradicting our minimal choice of $(\ell, r)$.
So $x \not \in J$.
Looking at $(\ell, x, a, y)$, we see that $J$ and $Ey$ are not weakly separated, a contradiction.
\end{proof}

We define $(J, \ell, r)$ to be a \newword{witness against $M^a$} if $J \in \C$ and the numbered conditions of Lemma~\ref{WitnessDescription} hold.

\begin{lemma} \label{OverlappingWitnesses}
Suppose that $(x,y) \setminus E$ is nonempty, and no $M^a$ is weakly separated from $\C$.
Then there is a sequence of triples, $(J_1, \ell_1, r_1)$, \dots, $(J_q, \ell_q, r_q)$, each of which obeys,
\begin{enumerate}
\item $J_m \in \C$
\item $\ell_m$ and $r_m$ are in $J_m \setminus E$
\item $x \leq_x \ell_m <_x r_m \leq_x y$
\item $(r_m, \ell_m) \cap (E \setminus J_m) \neq \emptyset$.
\item $(\ell_m, r_m) \cap (J_m \setminus E) = \emptyset$.
\end{enumerate}
and such that  $x=\ell_1 <_x \ell_2 <_x \cdots <_x \ell_q$ and $r_1 <_x r_2 <_x \cdots <_x r_q=y$ and $\ell_{i+1} <_x r_i$.
\end{lemma}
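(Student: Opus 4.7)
The plan is to build the chain by a greedy induction in which the element $r_m$ itself serves as the $a \in (x,y) \setminus E$ used to invoke Lemma~\ref{WitnessDescription} at the next stage; at each step I choose the next witness to maximize $r_{m+1}$, and this maximality is precisely what forces strict monotonicity of the $\ell_m$'s.

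For the base case, I would take $a_{\min}$ to be the $<_x$-smallest element of $(x,y) \setminus E$ (which exists by hypothesis) and apply Lemma~\ref{WitnessDescription} using that $M^{a_{\min}}$ is not weakly separated from $\C$. The resulting witness $(J_1, \ell_1, r_1)$ has $\ell_1 \in J_1 \setminus E$ and $x \leq_x \ell_1 <_x a_{\min}$; by minimality of $a_{\min}$, the cyclic interval $(x, a_{\min})$ is contained in $E$, forcing $\ell_1 = x$. I then refine this choice so that $r_1$ is $<_x$-maximal among all witnesses with $\ell = x$.

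For the inductive step, assume $(J_i, \ell_i, r_i)$ has been chosen with $r_i <_x y$. Since $r_i \in J_i \setminus E$, $r_i >_x \ell_i \geq_x x$, and $r_i <_x y$, we have $r_i \in (x,y) \setminus E$, so Lemma~\ref{WitnessDescription} produces a witness against $M^{r_i}$. I would choose $(J_{i+1}, \ell_{i+1}, r_{i+1})$ to maximize $r_{i+1}$ over all such witnesses; the inequalities $\ell_{i+1} <_x r_i <_x r_{i+1}$ are then automatic from $r_i \in (\ell_{i+1}, r_{i+1})$. To force $\ell_{i+1} >_x \ell_i$: if $i = 1$ and $\ell_2 = x$, then $(J_2, x, r_2)$ is a witness with $\ell = x$ and $r_2 >_x r_1$, contradicting the base-case choice of $r_1$. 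If $i \geq 2$ and $\ell_{i+1} \leq_x \ell_i$, then since $\ell_i <_x r_{i-1}$ (from the previous step's construction), we get $\ell_{i+1} <_x r_{i-1} <_x r_{i+1}$, so $(J_{i+1}, \ell_{i+1}, r_{i+1})$ is itself a witness against $M^{r_{i-1}}$ with $r_{i+1} >_x r_i$, contradicting the maximality of $r_i$ at the previous step.

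Termination is immediate: the $r_i$'s form a strictly $<_x$-increasing sequence inside the finite set $[x,y]$, and the recursion ends only when $r_i = y$, yielding $r_q = y$. The five numbered conditions on each triple $(J_m, \ell_m, r_m)$ are simply the defining properties of a witness, directly supplied by Lemma~\ref{WitnessDescription}. The main obstacle is securing strict monotonicity of the $\ell_m$'s; engineering the maximization of $r$ at each step so that a hypothetical failure $\ell_{m+1} \leq_x \ell_m$ retroactively contradicts the previous step's maximality is the trick that resolves it.
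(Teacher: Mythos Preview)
Your argument is correct and takes a genuinely different route from the paper's. The paper first notes that every $a \in (x,y) \setminus E$ lies in the interval $(\ell,r)$ of \emph{some} witness, and then selects a \emph{minimal} subcollection of witnesses whose intervals still cover all of $(x,y)\setminus E$; minimality prevents any interval from being contained in another, which forces the simultaneous ordering of the $\ell_m$'s and $r_m$'s, and any failure of $\ell_1=x$, $r_q=y$, or $\ell_{i+1}<_x r_i$ would expose an uncovered element of $(x,y)\setminus E$. Your construction is instead a left-to-right greedy algorithm: at each step you use $r_i$ itself as the element of $(x,y)\setminus E$ against which to invoke Lemma~\ref{WitnessDescription}, and you maximize $r_{i+1}$ among the resulting witnesses; the point of the maximization is that a failure of $\ell_{i+1}>_x\ell_i$ would make $(J_{i+1},\ell_{i+1},r_{i+1})$ an eligible witness at the previous stage with strictly larger $r$, contradicting that stage's choice. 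The paper's argument is shorter and more conceptual (one global minimality condition does all the work), while yours is explicitly constructive and makes the chain canonical once the maximization convention is fixed. Both are perfectly valid; the only thing I would tighten in your write-up is to say explicitly that in the base case ``all witnesses with $\ell=x$'' means all witness triples $(J,\ell,r)$ in the sense of the definition following Lemma~\ref{WitnessDescription} (for any $a$), so that the later contradiction with $(J_2,x,r_2)$ is unambiguous.
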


\begin{proof}
Consider any $a \in (x,y) \setminus E$.
Since $M^a$ is not weakly separated from $\C$, there is some $J$ in $M^a$ which is not weakly separated from $M^a$.
Since $Ex$ and $Ey$ are in $\C$, we know that $J$ is weakly separated from $Ex$ and $Ey$.
So we can complete $J$ to a witness, $(J, \ell, r)$, against $M^a$. This $(J, \ell, r)$ will obey the numbered conditions above.

Choose a minimal collection $(J_1, \ell_1, r_q)$, \dots, $(J_q, \ell_q, r_q)$, obeying the numbered conditions, so that every $a \in (x,y) \setminus E$ lies in some $(\ell_m, r_m)$.
(Since there is some $a \in (x,y) \setminus E$, this minimal collection is nonempty.)
We cannot have $(\ell_i, r_i) \subseteq (\ell_j, r_j)$ for any $i \neq j$, as otherwise we could delete $(J_i, \ell_i, r_i)$ and have a smaller collection.
Thus, we may reorder our collection so that $\ell_1 <_x \ell_2 <_x \ldots <_x \ell_q$ and $r_1 <_x r_2 <_x \cdots <_x r_q=y$.

If $\ell_1 \in (x,y)$, then $\ell_1$ is an element of $(x,y) \setminus E$ and $\ell_1$ does not lie in any $(\ell_m, r_m)$, a contradiction.
Similarly, $r_q \not \in (x,y)$. So $\ell_1=x$ and $r_q = y$.
Finally, suppose that $\ell_{i+1} \geq_x r_i$. 
Then $r_i$ is an element of $(x,y) \setminus E$ which is not in any $(\ell_m, r_m)$, a contradiction.
\end{proof}

We now assume that $(x,y) \setminus E$ is nonempty and that no $M^a$ is weakly separated from $\C$.
For clarity, we will continue to state these assumptions explicitly in all lemmas that rely on them.
We fix, once and for all, a sequence of triples $(J_m, \ell_m, r_m)$ as in Lemma~\ref{OverlappingWitnesses}.

We introduce a total order $\lw$ on $\binom{[n]}{k}$:
Given $I$ and $J$ in $\binom{[n]}{k}$, first compare $| J \cap (x,y) |$ and $ | I \cap (x,y) |$. 
If $| I \cap (x,y) | < | J \cap (x,y) |$, set $I \lw J$.
If this comparison is inconclusive and $|I \cap (y,x) | > | J \cap (y,x)|$, set $I \lw J$.
If both of these comparisons are inconclusive, and $I \cap (x,y)$ is lexicographically before $J \cap (x,y)$, set $I \lw J$; 
if that too is inconclusive, and $I \cap (y,x)$ is lexicographically after $J\cap (y,x)$, set $I \lw J$. (Here we use the orderings $<_x$ and $<_y$ on $(x,y)$ and $(y,x)$ respectively.)
Finally, if all of these are inconclusive\footnote{Which decision we make in this case is completely immaterial to the proof; we just made a decision so that the order would be total.}, then either $I=J$ or $\{ I,J \} = \{ Sx, Sy \}$ for some $S$; set $Sx \lw Sy$.

This order is constructed to obey the dual lemmas:

\begin{lemma} \label{lw-works}
Suppose that $I$ and $J$ are weakly separated. Suppose that there are $a <_a b <_a c$ in $[x,y]$ such that $a$ and $c \in I \setminus J$ and  $b \in J \setminus I$. Then $I \lw J$.
\end{lemma}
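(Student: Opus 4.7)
The plan is to read the hypothesis as $x \leq_x a <_x b <_x c \leq_x y$ (the natural interpretation of ``$a <_a b <_a c$ in $[x,y]$'', since in the other cyclic arrangements of $a,b,c$ inside $[x,y]$ one can check that the conclusion actually fails), and then to walk $I$ and $J$ through the successive tiebreakers defining $\lw$ until one of them fires in favor of $I$.

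First, I would use $I \parallel J$ to pin down $J \setminus I$. For any $q \in J \setminus I$ distinct from $b$, the quadruple $(a,b,c,q)$ has $a, c \in I \setminus J$ and $b, q \in J \setminus I$, so it cannot be cyclically ordered. Since $\{a,b,c\}$ already sits in cyclic order $(a,b,c)$, this forbids $q \in (c,a)$ and forces $q$ into the open arc from $a$ to $c$ through $b$. Under the hypothesis $x \leq_x a <_x b <_x c \leq_x y$, that arc is exactly $\{a+1, \ldots, c-1\}$ in $<_x$ order, which sits inside $(x, y)$. So $J \setminus I \subseteq (x, y)$, and every $q \in J \setminus I$ satisfies $a <_x q$.

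Next, I would march through the tiebreakers. From $|I| = |J|$ combined with $J \setminus I \subseteq (x, y)$, a one-line count gives
\[
|I \cap (x,y)| - |J \cap (x,y)| = -\,|(I \setminus J) \setminus (x, y)| \leq 0.
\]
If the inequality is strict, the first criterion yields $I \lw J$ immediately; this covers the boundary cases $a = x$ or $c = y$, since then $x$ or $y$ itself contributes to the right hand side. Otherwise $I \setminus J \subseteq (x, y)$ as well, so both $I \setminus J$ and $J \setminus I$ live in $(x, y)$; hence $(y, x)$ sees no change and the second criterion also ties, giving $|I \cap (y,x)| = |J \cap (y,x)|$.

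Finally, the third criterion, lex on $(x, y)$ with respect to $<_x$, settles the remaining case. Here $a \in I \setminus J \subseteq (x, y)$, so $a$ is a genuine element of $(I \setminus J) \cap (x, y)$; every element of $J \setminus I$ is strictly $>_x a$ by the first step, so the $<_x$-minimum of the symmetric difference $(I \setminus J) \sqcup (J \setminus I)$ lies in $I \setminus J$. This is precisely what it means for the sorted sequence $I \cap (x,y)$ to precede $J \cap (x,y)$ lexicographically, so the third criterion gives $I \lw J$. The main obstacle is really just parsing the hypothesis correctly; once the arc containment is established, the rest is careful accounting of cardinalities in $(x,y)$ versus $(y,x)$.
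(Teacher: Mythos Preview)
Your proof is correct and follows essentially the same route as the paper's: first use weak separation with the pair $a,c$ to confine $J\setminus I$ to the arc $(a,c)\subseteq (x,y)$, then split on whether $(I\setminus J)\setminus(x,y)$ is nonempty (first tiebreaker fires) or empty (third tiebreaker fires via the lexicographic argument on $(x,y)$). You are also right to read the hypothesis ``$a <_a b <_a c$ in $[x,y]$'' as $x\leq_x a <_x b <_x c \leq_x y$; the paper's own argument (and its later applications of the lemma) use exactly this reading, and your counterexample remark for the other cyclic arrangements is well taken.
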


\begin{lemma} \label{lw-worksdual}
Suppose that $I$ and $J$ are weakly separated. Suppose that there are $a <_a b <_a c$ in $[y,x]$ such that $a$ and $c \in J \setminus I$ and  $b \in I \setminus J$. Then $I \lw J$.
\end{lemma}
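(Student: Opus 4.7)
The plan is to use weak separation to localize $A := I\setminus J$ and $B := J\setminus I$ on the circle, then match the outcome against the four clauses defining $\lw$. Since $I \parallel J$, a separating chord places $A$ and $B$ in complementary open arcs. Because $b \in A$ while $a, c \in B$ appear in the cyclic order $a, b, c$, the arc containing $A$ must be the one through $b$, which is a sub-arc of the open interval $(a,c)$; so $A \subseteq (a,c)$. The hypothesis $a, b, c \in [y,x]$ combined with $a <_a b <_a c$ gives $y \leq_y a <_y b <_y c \leq_y x$, so the interval $(a,c)$ sits entirely inside $(y,x)$. Hence $A \subseteq (y,x)$ and $A$ is disjoint from both $(x,y)$ and $\{x,y\}$. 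Using $|A| = |B|$, these containments yield
\[
|J \cap (x,y)| - |I \cap (x,y)| = |B \cap (x,y)|, \qquad |I \cap (y,x)| - |J \cap (y,x)| = |B \cap (x,y)| + |B \cap \{x,y\}|.
\]

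I then dispatch the four clauses of $\lw$ in order. If $|B \cap (x,y)| > 0$, the first identity applied to clause 1 gives $I \lw J$. If $B \cap (x,y) = \emptyset$ but $B \cap \{x,y\} \neq \emptyset$, the second identity forces $|I \cap (y,x)| > |J \cap (y,x)|$, so clause 2 applies. In the only remaining case $B \subseteq (y,x)$ (so in particular $a \neq y$ and $c \neq x$); then both differences above vanish, and also $I \cap (x,y) = J \cap (x,y) = (I\cap J) \cap (x,y)$, so clauses 1, 2, and 3 are all inconclusive.

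For this residual case clause 4 requires $I \cap (y,x)$ to be lexicographically after $J \cap (y,x)$ in the order $<_y$. The usual lexicographic comparison of two equal-size sorted sets is governed by the $<_y$-minimum of their symmetric difference; this difference, restricted to $(y,x)$, equals $A \sqcup B$. Since $a \in B$ and $A \subseteq (a,c)$ contains no element $\leq_y a$, the $<_y$-minimum of $A \cup B$ necessarily lies in $B$. Hence $J \cap (y,x)$ carries a strictly smaller element than $I \cap (y,x)$ at the first differing position, so $I \cap (y,x)$ is lex after $J \cap (y,x)$ and clause 4 yields $I \lw J$. The main technical hurdle is precisely this clause 4 analysis together with verifying that the boundary possibilities $a = y$ and $c = x$ are cleanly absorbed by clauses 1 and 2, leaving clause 4 with only the interior situation where $A, B \subseteq (y,x)$.
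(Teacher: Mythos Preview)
Your proof is correct and is precisely the dualization of the paper's proof of Lemma~\ref{lw-works} that the paper asks the reader to supply. The paper's argument for the dual lemma first shows $J\setminus I \subseteq (x,y)$ and then splits into two cases according to whether $I\setminus J$ meets $[y,x]$; dualizing verbatim would give $I\setminus J \subseteq (y,x)$ and a split on whether $J\setminus I$ meets $[x,y]$. Your argument does exactly this, with one refinement: because the definition of $\lw$ tests the $(x,y)$-cardinality before the $(y,x)$-cardinality, the dualized ``element of $J\setminus I$ in $[x,y]$'' case does not land directly in a single clause, so you correctly subdivide it into $B\cap(x,y)\neq\emptyset$ (clause~1) versus $B\cap\{x,y\}\neq\emptyset$ (clause~2). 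That extra care is the only new wrinkle in the dualization, and you handle it cleanly.
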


We prove Lemma~\ref{lw-works} and leave it to the reader to dualize our argument to prove Lemma~\ref{lw-worksdual}.

\begin{proof}[Proof of Lemma~\ref{lw-works}]
Since $I$ and $J$ are weakly separated, there can be no element of $[y,x]$ which is in $J \setminus I$.
If there is any element of $I \setminus J$ in $[y,x]$, then $| I \cap (x,y) | < |J \cap (x,y)|$, so we have $I \lw J$ as desired.

If there is no element of $I \setminus J$ in $[y,x]$, then $|I \cap (x,y)| = |J \cap (x,y)|$ and $|I \cap (y,x)| = |J \cap (y,x)|$, so we move to the third and fourth prongs of the test.
We have $I \cap (y,x) = J \cap (y,x)$. We know that $a \in (I \cap (x,y)) \setminus (J \cap (x,y))$, and, because $I$ and $J$ are weakly separated, there can be no element of $(J \cap (x,y)) \setminus (I \cap (x,y))$ which is $\leq_x$-prior to $a$. So $I \lw J$ in this case as well.
\end{proof}

%
%Let $\epsilon$ be a transcendental positive real number, less than $1/k$.
%Define a function $w : [n] \to \RR$ by $w(x) = 1$, $w(x+1) = 1+\epsilon$, $w(x+2) = 1+ \epsilon + \epsilon^2$, \ldots, $w(y-1) = 1+ \epsilon + \cdots + \epsilon^{y-x-1}$, $w(y) = -1$, $w(y+1) = -1 - \epsilon$, \ldots, $w(x-1) = -1-  \epsilon - \ldots - \epsilon^{y-x-1}$. (We must lift the exponents $x-y-1$ and $y-x-1$ to lie between $0$ and $n-1$; they naturally live in $\mathbb{Z}/n$.)
%Define $I \lw J$ if $\sum_{i \in I} w(i) < \sum_{j \in J} w(j)$. Choosing $\epsilon$ transcendental ensures that we never have equality.
%
%\begin{lemma} \label{lw-works}
%Suppose that $I$ and $J$ are weakly separated. Suppose that there are $a <_a b <_a c$ in $[x,y]$ such that $a$ and $c \in I \setminus J$ and  $b \in J \setminus I$. Then $I \lw J$.
%\end{lemma}
%
%
%\begin{proof}
%
%The conditions on $I$ and $J$ force $J \setminus I$ to be contained in $[x,y]$. Let $x+s$ be the $<_x$ least member of $J \setminus I$. Also, there must be some $x+r \in (I \setminus J) \cap [x,y]$ with $x+s <_x x+r$. Then
%\begin{multline*}
%\sum_{j \in J} w(j) - \sum_{i \in I} w(i) \geq \left( \epsilon^{r+1} + \cdots + \epsilon^s \right) - \sum_{j>s} (k-1) \left(\epsilon^{j+1} + \epsilon^{j+2} + \cdots \right) \\ 
% > \epsilon^{r+1} - (k-1) \epsilon^{s+1}/(1-\epsilon) \geq \frac{\epsilon^{r+1}}{1-\epsilon} (1-\epsilon^{s-r}(k-1) - \epsilon).
%\end{multline*}
%The condition that $\epsilon<1/k$ ensures that this is positive.
%\end{proof}

We make the following definition:
For $1 \leq i \leq m \leq j \leq q$, we say that $J_m$ is an \newword{$(i,j)$-snake} if
\begin{enumerate}
 \item All of the integers $\ell_i$, $\ell_{i+1}$, \dots, $\ell_m$, $r_m$, \dots, $r_{j-1}$, $r_j$ are in $J_m$ and
\item $(r_j, \ell_i) \cap (E \setminus J_m)$ is nonempty.
\end{enumerate}

Note that $J_m$ is always an $(m,m)$-snake. The following lemma allows us to construct snakes.

\begin{lemma} \label{SnakeExists}
Assume that $(x,y) \setminus E$ is nonempty and that no $M^a$ is weakly separated from $\C$.

For any indices $i$ and $j$ with $1 \leq i \leq j \leq q$, let $J_m$ be the $\lw$-minimal witness among $J_i$, $J_{i+1}$, \dots, $J_j$. Then $J_m$ is an $(i,j)$-snake.
In particular, for any $(i,j)$, there is an $(i,j)$-snake.
\end{lemma}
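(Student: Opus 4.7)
My plan is to verify separately the two defining conditions of an $(i,j)$-snake for $J_m$. Throughout, for each $s \in [i,j]$ with $s \neq m$, the sets $J_s$ and $J_m$ lie in $\C$ and so are weakly separated, while $J_m \lw J_s$ by minimality; hence any configuration of elements triggering the hypotheses of Lemma~\ref{lw-works} or Lemma~\ref{lw-worksdual} in the reverse direction $J_s \lw J_m$ produces a contradiction. This is the main weapon I will use against the assumption that some required element is missing from $J_m$.

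For Condition~(1) -- that $\ell_i, \ldots, \ell_m, r_m, \ldots, r_j$ all lie in $J_m$ -- I first invoke the symmetry $(x,y) \leftrightarrow (y,x)$ swapping $\ell_s$ with $r_s$, which reduces matters to proving $\ell_s \in J_m$ for $s \in [i,m]$, and then proceed by downward induction (the base $s=m$ being trivial). Fix $s < m$ and assume $\ell_{s+1}, \ldots, \ell_m \in J_m$. The key preliminary observation is that $\ell_{s+1}$ lies in $(\ell_s, r_s)$ (because $\ell_{s+1} <_x r_s$), a cyclic interval avoided by $J_s \setminus E$, while $\ell_{s+1} \notin E$ (since $\ell_{s+1} \in J_{s+1} \setminus E$); hence $\ell_{s+1} \in J_m \setminus J_s$. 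Now suppose for contradiction that $\ell_s \notin J_m$, so $\ell_s \in J_s \setminus J_m$. If furthermore $r_s \notin J_m$, then the triple $\ell_s <_x \ell_{s+1} <_x r_s$ in $[x,y]$ has $\ell_s, r_s \in J_s \setminus J_m$ and $\ell_{s+1} \in J_m \setminus J_s$, yielding $J_s \lw J_m$ via Lemma~\ref{lw-works} -- contradiction. If instead $r_s \in J_m$, I use the witness point $b \in (r_s, \ell_s) \cap (E \setminus J_s)$ on the complementary arc, together with a suitable element of $J_m \setminus J_s$ in $[y,x]$ (assembled from the other witnesses $J_{s'}$ by iterating the ``not in $J_s$'' observation across the overlapping witness intervals), and apply Lemma~\ref{lw-worksdual} for the same contradiction.

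For Condition~(2), with $\ell_i, r_j \in J_m$ now in hand, I need to locate an element of $E \setminus J_m$ in $(r_j, \ell_i)$. The witness property for $J_m$ itself provides some $b^{\star} \in (r_m, \ell_m) \cap (E \setminus J_m)$, and since $r_m \leq_x r_j$ and $\ell_i \leq_x \ell_m$, the arc $(r_m, \ell_m)$ decomposes as $(r_m, r_j] \cup (r_j, \ell_i) \cup [\ell_i, \ell_m)$. A parallel argument to that of Condition~(1), again using weak separation of $J_m$ with each $J_s$ together with $\lw$-minimality, shows that any potential element of $E \setminus J_m$ in $(r_m, r_j]$ or $[\ell_i, \ell_m)$ would yield a triple violating $\lw$-minimality, pinning $b^{\star}$ into $(r_j, \ell_i)$ as required. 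The ``in particular'' conclusion is immediate since a $\lw$-minimum of a finite set of witnesses always exists.

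The hard part will be the second subcase of Condition~(1), where $r_s \in J_m$ blocks the direct application of Lemma~\ref{lw-works} and forces the argument onto the complementary arc $[y,x]$: producing two $J_s \setminus J_m$ elements flanking a $J_m \setminus J_s$ element there requires a careful pairing of the auxiliary witness point $b \in (r_s, \ell_s) \cap (E \setminus J_s)$ with information drawn from the already-placed elements $\ell_{s+1}, \ldots, \ell_m, r_m, \ldots, r_j$ of $J_m$, and may need to be iterated in lock-step with the downward induction rather than done one step at a time.
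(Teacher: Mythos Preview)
Your first subcase of Condition~(1) is fine, but the second subcase---where $\ell_s \notin J_m$ yet $r_s \in J_m$---is a genuine gap, and your sketch does not close it. You propose to pass to the complementary arc $[y,x]$ and apply Lemma~\ref{lw-worksdual}, citing the point $b \in (r_s,\ell_s)\cap(E\setminus J_s)$. But this $b$ lies in $E\setminus J_s$, so $b\notin J_s$; it therefore cannot serve as the middle element of a triple with $b\in J_s\setminus J_m$. For Lemma~\ref{lw-worksdual} to yield $J_s \lw J_m$ you would need \emph{two} elements of $J_m\setminus J_s$ in $[y,x]$ flanking an element of $J_s\setminus J_m$ there, and nothing in your setup supplies such elements: all of the $\ell$'s, the $r$'s, and your inductively placed $\ell_{s+1},\dots,\ell_m$ live in $[x,y]$. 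The promise to ``assemble'' the missing element ``from the other witnesses $J_{s'}$'' does not work, because the witness data only locate points of $J_{s'}\setminus E$ inside $[x,y]$, never points of $J_m$ inside $[y,x]$. Your Condition~(2) sketch has the same defect.

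The paper's proof avoids this by running a different induction: on $j-i$, not on the position $s$. When some $r_{j'}$ (the first one beyond $r_m$) fails to lie in $J_m$, the inductive hypothesis for the strictly shorter interval $[m+1,j']$ furnishes an $(m+1,j')$-snake $J_{m'}$. The snake property gives $\ell_{m+1},r_{j'}\in J_{m'}$; one checks $\ell_{m+1}\notin J_m$ (since $\ell_{m+1}\in(\ell_m,r_m)$ and $\ell_{m+1}\notin E$) and $r_{j'}\notin J_m$ (by choice), while $r_{m'-1}\in J_m\setminus J_{m'}$ (since $r_{m'-1}\in(\ell_{m'},r_{m'})$ and $r_{m'-1}\notin E$). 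Now the triple $\ell_{m+1}<_x r_{m'-1}<_x r_{j'}$, entirely inside $[x,y]$, triggers Lemma~\ref{lw-works} and gives $J_{m'}\lw J_m$, contradicting minimality. No excursion to $[y,x]$ is needed. Condition~(2) is handled by the same device: one again invokes a snake on a shorter subinterval and plays it off against $J_m$. The idea you are missing is precisely this recursive access to snakes on subintervals, which your single-level downward induction on $s$ does not provide.
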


\begin{proof}
Our proof is by induction on $j-i$. When $i=j$, the result is clear.

We now consider the case that $i<j$. We first establish that  $\ell_i$, $\ell_{i+1}$, \dots, $\ell_m$, $r_m$, \dots, $r_{j-1}$, and $r_j$ are in $J_m$.
Suppose, for the sake of contradiction, that $r_m$, $r_{m+1}$, \dots, $r_{j'-1}$ are in $J_m$ but $r_{j'}$ is not.
By induction, there is a $(m+1, j')$ snake $J_{m'}$.
Then $\ell_{m+1}$ and $r_{j'}$ are in $J_{m'}$ and not in $J_m$, while $r_{m'-1}$ is in $J_{m}$ and not $J_{m'}$ by Lemma~\ref{lw-works}.
This means that $J_{m'} \lw J_m$, a contradiction.
We conclude that  $r_m$, \dots, $r_{j-1}$, and $r_j$ are in $J_m$; similarly,  $\ell_i$, \dots, $\ell_{m-1}$  and $\ell_m$ are in $J_m$.

We now establish that $(r_j, \ell_i) \cap (E \setminus J_m)$ is nonempty.
We know that $(r_m, \ell_m) \cap (E \setminus J_m)$ is nonempty; let $e \in (r_m, \ell_m) \cap (E \setminus J_m)$. 
Suppose, for the sake of contradiction, that $(r_j, \ell_i) \cap (E \setminus J_m)$ is empty, and hence $e \in (r_{j'-1}, r_{j'})$ for some $j'$.
(The case where $e \in (\ell_{i'},  \ell_{i'+1})$ is similar.)
By induction, there is an $(m+1, j')$-snake $J_{m'}$.
Let $e'$ be an element of $(r_{j'}, \ell_{m+1}) \cap (E \setminus J_{m'})$.

$Ex$ and $J_{m'}$ are weakly separated.
Considering $(\ell_{m+1}, e, r_{j'}, e')$, we see that $e \in J_{m'}$.
But then $\ell_{m+1}$ and $e$ are in $J_{m'}$ and not $J_m$, while $r_{m'-1}$ and $e'$ are in $J_m$ and not $J_{m'}$. By our choice of $j'$ such that $e \in (r_{j'-1}, r_{j'})$, we know that $r_{m'} \subset (\ell_{m+1}, e). $So $J_{m'}$ and $J_m$ are not weakly separated, a contradiction.

%by our choice of $j'$ such that $e \in (r_{j'-1}, r_{j'})$, we know that $r_{m'} \subset (\ell_{m+1}, e)$.
%So $J_{m'} \lw J_m$, a contradiction.
\end{proof}

As a corollary, we remove the need to assume that $(x,y) \setminus E$ is nonempty.

\begin{proposition}
Assume that $\C$ is not weakly separated from any $M^a$ nor any $N^b$, and recall the standing hypothesis that $E$ is not the interval $(x,y)$. Then both $(x,y) \setminus E$ and $(y,x) \cap E$ are nonempty.
\end{proposition}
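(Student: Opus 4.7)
The plan is to reduce the claim to a direct application of Lemma~\ref{SnakeExists} together with the dualization symmetry described immediately after Lemma~\ref{lem:key}.

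First I would unpack what ``$E$ is not the interval $(x,y)$'' means. Since $|E|=k-1$ while $Ex,Ey\in\binom{[n]}{k}$, necessarily $x,y\notin E$, so $E\setminus(x,y)=E\cap(y,x)$. Hence the hypothesis $E\neq(x,y)$ is equivalent to the disjunction
\[
(x,y)\setminus E\neq\emptyset\quad\text{or}\quad(y,x)\cap E\neq\emptyset.
\]
What needs to be proved is that both disjuncts hold simultaneously. So I would argue by contradiction and, by the dualization symmetry, may assume that $(x,y)\setminus E\neq\emptyset$ while $(y,x)\cap E=\emptyset$.

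In this situation the ``no $N^b$ is weakly separated from $\C$'' hypothesis is vacuous (there is no valid $b$), but the ``no $M^a$ is weakly separated from $\C$'' hypothesis is in force, and $(x,y)\setminus E\neq\emptyset$ is exactly the assumption needed to invoke Lemma~\ref{SnakeExists}. I would apply that lemma with $i=1$ and $j=q$ to produce a $(1,q)$-snake $J_m$. By definition of a $(1,q)$-snake, its second defining condition asserts
\[
(r_q,\ell_1)\cap(E\setminus J_m)\neq\emptyset.
\]
Since by Lemma~\ref{OverlappingWitnesses} $\ell_1=x$ and $r_q=y$, this says $(y,x)\cap(E\setminus J_m)\neq\emptyset$, and in particular $(y,x)\cap E\neq\emptyset$, contradicting the case assumption.

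The remaining case, in which $(y,x)\cap E\neq\emptyset$ but $(x,y)\setminus E=\emptyset$, is handled by dualization: the symmetry that replaces every set by its complement and swaps $x$ and $y$ interchanges the roles of $M^a$ and $N^b$, of $(x,y)\setminus E$ and $(y,x)\cap E$, and correspondingly produces the dual versions of Lemmas~\ref{WitnessDescription}, \ref{OverlappingWitnesses}, and~\ref{SnakeExists}; these then yield the same contradiction. There is essentially no technical obstacle here beyond correctly matching the dualized definitions; the proof is a direct unpacking of the $(1,q)$-snake condition provided by Lemma~\ref{SnakeExists}.
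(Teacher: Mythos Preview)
Your proposal is correct and follows essentially the same route as the paper: both arguments observe that the hypothesis $E\neq(x,y)$ forces one of $(x,y)\setminus E$ or $(y,x)\cap E$ to be nonempty, reduce by duality to the case where the former holds, and then apply Lemma~\ref{SnakeExists} with $(i,j)=(1,q)$ to obtain a $(1,q)$-snake whose second defining condition yields an element of $(r_q,\ell_1)\cap(E\setminus J_m)=(y,x)\cap(E\setminus J_m)\subseteq(y,x)\cap E$. The only cosmetic difference is that the paper phrases this as a direct deduction (``assume the former; conclude the latter'') while you frame it as a proof by contradiction; your unpacking of why $x,y\notin E$ and the reference to Lemma~\ref{OverlappingWitnesses} for $\ell_1=x$, $r_q=y$ are accurate.
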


\begin{proof}
By our standing hypotheses, either $(x,y) \setminus E$ or $(y,x) \cap E$ is nonempty.
Without loss of generality, assume it is the former so the $(J_m, \ell_m, r_m)$ are defined.
Lemma~\ref{SnakeExists} applies, and we know that there is an $(1,q)$-snake $J_m$.
For this $J_m$, we know that $(r_q, \ell_1) \cap (E \setminus J_m)$ is nonempty.
Since $(r_q, \ell_1) \cap (E \setminus J_m) = (y,x) \cap (E \setminus J_m) \subseteq (y,x) \cap E$, we conclude that $(y,x) \cap E$ is also nonempty.
\end{proof}

We are now entering into the final stages of the proof of Lemma~\ref{lem:key}.
Assume, for the sake of contradiction, that $\C$ is not weakly separated from any $M^a$ nor any $N^b$.
So we have a sequence of witnesses $(J_m, \ell_m, r_m)$, and a dual sequence $(J'_m, \ell'_m, r'_m)$.
We point out that $\ell'_m$ and $r'_m$ are in $[y,x]$; we leave it to the reader to dualize the other parts of the definition of a witness.

Let $J_{\mu}$ and $J'_{\mu'}$ be the $\lw$-minimal elements among the sequences $J_i$ and $J'_i$.

\begin{lemma} \label{MusAreWide}
 $J_{\mu}$ contains both $x$ and $y$, and $(y,x) \cap (E \setminus J_{\mu})$ is nonempty. 
$J'_{\mu'}$ contains neither $y$ nor $x$ and $(x,y) \cap (J'_{\mu'} \setminus E)$ is nonempty.
\end{lemma}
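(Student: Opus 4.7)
The plan is to derive both halves of the lemma as direct consequences of Lemma~\ref{SnakeExists} (and its dual) applied to the \emph{full} index range. All the work has really been done in the previous few lemmas; what remains is essentially bookkeeping.

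For $J_\mu$: I would apply Lemma~\ref{SnakeExists} with $i=1$ and $j=q$, which tells us that the $\lw$-minimal witness $J_\mu$ among $J_1,\dots,J_q$ is a $(1,q)$-snake. Unpacking the definition of a snake, this gives (i) $\ell_1,\ell_2,\dots,\ell_\mu,r_\mu,\dots,r_{q-1},r_q \in J_\mu$, and (ii) $(r_q,\ell_1)\cap(E\setminus J_\mu)\neq\emptyset$. But Lemma~\ref{OverlappingWitnesses} fixes $\ell_1=x$ and $r_q=y$. Substituting, (i) yields $x,y\in J_\mu$ and (ii) yields $(y,x)\cap(E\setminus J_\mu)\neq\emptyset$, exactly the first statement.

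For $J'_{\mu'}$: I would invoke the dualization of the entire framework of Section~\ref{sec:key} built so far. Under the dualization that replaces every subset by its complement and swaps $x\leftrightarrow y$, the role of $J\setminus E$ passes to $E\setminus J$, the interval $[x,y]$ to $[y,x]$, the sets $M^a$ to the sets $N^b$, and crucially the order $\lw$ is preserved (the paper has deliberately set up $\lw$ and the paired Lemmas~\ref{lw-works}/\ref{lw-worksdual} to be symmetric under dualization). So the dualized Lemma~\ref{SnakeExists}, applied with $i=1$ and $j=q'$, tells us that $J'_{\mu'}$ is a dual $(1,q')$-snake, meaning $\ell'_1,\dots,\ell'_{\mu'},r'_{\mu'},\dots,r'_{q'}\in E\setminus J'_{\mu'}$ and $(r'_{q'},\ell'_1)\cap(J'_{\mu'}\setminus E)\neq\emptyset$. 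The dualized Lemma~\ref{OverlappingWitnesses} pins down $\ell'_1=y$ and $r'_{q'}=x$, so $x,y\notin J'_{\mu'}$ and $(x,y)\cap(J'_{\mu'}\setminus E)\neq\emptyset$, as desired.

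The only obstacle is a bookkeeping one: confirming that every ingredient in the development—the definition of a witness, Lemma~\ref{OverlappingWitnesses}, the notion of snake, and Lemma~\ref{SnakeExists}—really does dualize verbatim. The paper explicitly leaves this dualization to the reader and has already arranged the order $\lw$ to respect it, so no genuine new argument is required.
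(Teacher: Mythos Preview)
Your proposal is correct and follows exactly the paper's approach: the paper's proof consists of two sentences, stating that the first claim is Lemma~\ref{SnakeExists} applied to $(i,j)=(1,q)$ and that the second claim is the dual of the first. You have simply unpacked these two sentences, correctly identifying that $\ell_1=x$, $r_q=y$ from Lemma~\ref{OverlappingWitnesses} and that the snake condition then yields the desired containments.
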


\begin{proof}
The first statement is Lemma~\ref{SnakeExists} applied to $(i,j) = (1,q)$. 
The second statement is the dual of the first.
\end{proof}

Let $H := (x,y) \cap (J'_{\mu'} \setminus E)$. 
By Lemma~\ref{MusAreWide}, $H$ is nonempty; let $a \in H$.
By definition, $a \in (x,y) \setminus E$, so there is some $(J_{i}, \ell_{i}, r_{i})$ which is a witness against $M^a$. 
In particular, $H \cap (\ell_i, r_i)$ contains $a$, and is thus nonempty.

Let $(J_{\nu}, \ell_{\nu}, r_{\nu})$ be $\lw$-minimal such that $H \cap (\ell_{\nu}, r_{\nu})$ is nonempty.

\begin{proposition}
With the above definition, $J_{\nu} \lw J'_{\mu'}$.
\end{proposition}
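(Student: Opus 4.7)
The strategy is to apply Lemma~\ref{lw-works} to the triple $\ell_\nu <_x a <_x r_\nu$ in $[x,y]$, where $a$ is any element of $H \cap (\ell_\nu, r_\nu)$. Since $a \in H \subseteq J'_{\mu'}$, while $a \notin E$ together with $(\ell_\nu, r_\nu) \cap (J_\nu \setminus E) = \emptyset$ forces $a \notin J_\nu$, we have $a \in J'_{\mu'} \setminus J_\nu$. The endpoints $\ell_\nu, r_\nu$ lie in $J_\nu$ by the witness conditions, so the conclusion $J_\nu \lw J'_{\mu'}$ reduces to verifying $\ell_\nu \notin J'_{\mu'}$ and $r_\nu \notin J'_{\mu'}$.

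The two cases are symmetric, so I focus on $\ell_\nu$. If $\ell_\nu = x$, Lemma~\ref{MusAreWide} immediately gives $\ell_\nu \notin J'_{\mu'}$. Otherwise $\ell_\nu \in (x,y)$ and $\ell_\nu \notin E$, and I argue by contradiction: assuming $\ell_\nu \in J'_{\mu'}$ places $\ell_\nu$ in $H$. The covering property used in Lemma~\ref{OverlappingWitnesses} then supplies some witness $(J_i, \ell_i, r_i)$ with $\ell_\nu \in (\ell_i, r_i)$; the strict ordering $\ell_1 <_x \cdots <_x \ell_q$ and $r_1 <_x \cdots <_x r_q$ forces $i < \nu$, so $r_i < r_\nu$ and $r_i \in (\ell_\nu, r_\nu)$. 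Since $H \cap (\ell_i, r_i) \ni \ell_\nu$, the $\lw$-minimality of $J_\nu$ gives $J_\nu \lweq J_i$; equality is ruled out because $\ell_\nu \in (\ell_i, r_i) \cap (J_\nu \setminus E)$ but $(\ell_i, r_i) \cap (J_i \setminus E) = \emptyset$. Hence $J_\nu \lw J_i$ strictly.

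To derive a contradiction I will now show the reverse $J_i \lw J_\nu$ by invoking Lemma~\ref{lw-works} on the triple $\ell_i <_x \ell_\nu <_x r_i$ in $[x,y]$. Two of the three required memberships come for free: $\ell_\nu \in J_\nu \setminus J_i$ by the argument just given, and $r_i \in J_i \setminus J_\nu$ by the mirror argument, since $r_i \in (\ell_\nu, r_\nu) \cap (J_i \setminus E)$ while $(\ell_\nu, r_\nu) \cap (J_\nu \setminus E) = \emptyset$. The missing ingredient is $\ell_i \notin J_\nu$, and establishing this is the principal technical obstacle. I expect to handle it by applying Lemma~\ref{SnakeExists} to a suitable range (such as $[1,i]$ or $[i,\nu]$) to produce an intermediate snake whose required $\ell_\cdot$ and $r_\cdot$ memberships, played against the $\lw$-minimality of $J_\nu$ as in the proof of Lemma~\ref{SnakeExists}, leave no room for $\ell_i$ to lie in $J_\nu$; this snake-versus-minimality interplay is the same delicate reasoning that powered the earlier results of Section~\ref{sec:key}.
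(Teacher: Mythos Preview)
Your overall strategy --- verify $\ell_\nu,r_\nu\notin J'_{\mu'}$ and then apply Lemma~\ref{lw-works} to the triple $\ell_\nu<_x a<_x r_\nu$ --- is sound in principle, but the execution has a real gap. You correctly reduce the claim $\ell_\nu\notin J'_{\mu'}$ to showing $\ell_i\notin J_\nu$ for a suitable earlier witness index $i<\nu$, and you acknowledge this as ``the principal technical obstacle.'' The difficulty is that your proposed fix, invoking Lemma~\ref{SnakeExists} on a range such as $[i,\nu]$, is likely to push in the wrong direction: if the $\lw$-minimal witness on $[i,\nu]$ happens to be $J_\nu$ itself (which is not excluded, since you have only shown $J_\nu\lw J_i$, not that $J_\nu$ is minimal on the whole range), then Lemma~\ref{SnakeExists} makes $J_\nu$ an $(i,\nu)$-snake and hence forces $\ell_i\in J_\nu$, the opposite of what you want. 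Nothing in your setup rules this out, and the vague appeal to ``snake-versus-minimality interplay'' does not supply the missing argument.

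The paper's proof avoids this trap by organizing the contradiction differently. Rather than trying to exclude $\ell_\nu\in J'_{\mu'}$ and $r_\nu\in J'_{\mu'}$ separately, it assumes $J_\nu\not\lw J'_{\mu'}$ and applies the \emph{contrapositive} of Lemma~\ref{lw-works}: since $h\in J'_{\mu'}\setminus J_\nu$ lies in $[x,y]$, one obtains the much stronger conclusion that (say) the entire set $J_\nu\cap(h,y]$ is contained in $J'_{\mu'}$. This forces $y\notin J_\nu$, so one can let $j$ be the first index with $r_j\notin J_\nu$, take the $\lw$-minimal $J_m$ on $[\nu,j]$ (which is a $(\nu,j)$-snake and strictly $\lw$-below $J_\nu$), and then observe that $r_{m-1}\in J_\nu\cap(h,y]\subseteq J'_{\mu'}$ lies in $H\cap(\ell_m,r_m)$, contradicting the $\lw$-minimality of $J_\nu$. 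The crucial point is that the inclusion $J_\nu\cap(h,y]\subseteq J'_{\mu'}$ lets one certify $r_{m-1}\in H$ for an index $m$ that is not known in advance --- information your weaker hypothesis ``$\ell_\nu\in J'_{\mu'}$'' does not provide.
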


\begin{proof}
For the sake of contradiction, assume that $J_{\nu} \not \lw J'_{\mu'}$.

Let $h$ be in $H \cap (\ell_{\nu}, r_{\nu})$.
Then $h$ is in $J'_{\mu'}$ and not in $J_{\nu}$.
By the contrapositive of lemma~\ref{lw-works}, either $J'_{\mu'}$ contains every element of $J_{\nu} \cap [x,h)$, or $J'_{\mu'}$ contains every element of $J_{\nu} \cap (h,y]$.
Without loss of generality, assume that it is the latter.

Since $y \not \in J'_{\mu'}$ (Lemma~\ref{MusAreWide}), we deduce that $y \not \in J_{\nu}$. Now, $y=r_q$.
Let $j$ be such that $r_{\nu}$, $r_{\nu+1}$, \dots, $r_{j-1}$ are in $J_{\nu}$ and $r_j$ is not.

Let $J_m$ be the $\lw$-minimal element of $J_{\nu}$, $J_{\nu + 1}$, \dots, $J_{j}$.
By Lemma~\ref{SnakeExists}, $J_m$ is a $(\nu, j)$-snake.
Since $r_j \not \in J_{\nu}$, we know that $J_m \neq J_{\nu}$ and thus we have the strict inequality $J_m \lw J_{\nu}$.

Since $\nu \leq m-1 <j$, we have $r_{m-1} \in J_{\nu}$. 
Since $J'_{\mu'} \supseteq J_{\nu} \cap (h,y]$, we have $r_{m-1} \in J'_{\mu'}$.
Also, one of the conditions on the witness $(J_{m-1}, \ell_{m-1}, r_{m-1})$ is that $r_{m-1} \not \in E$.
So $r_{m-1}$ is in $H$.

But $r_{m-1}$ is in $(\ell_m, r_m)$ and we saw above that $J_m \lw J_{\nu}$.
This contradicts the minimality of $J_{\nu}$.
\end{proof}

We have now shown that there is some $J_{\nu}$ with $J_{\nu} \lw J'_{\mu'}$.
By the minimality of $J_{\mu}$, we have $J_{\mu} \lweq J_{\nu}$, so $J_{\mu} \lw J'_{\mu'}$.
But the dual argument shows that $J'_{\mu'} \lw J_{\mu}$.
We have reached a contradiction, and Lemma~\ref{lem:key} is proved. \qedsymbol

\section{Proof of the Purity Conjecture} \label{sec:final}

In this section, we prove Theorem~\ref{thm:GSConjecture}. 
More precisely, we prove:

\begin{theorem} \label{thm:WSisFGPos}
Let $\I$ be a Grassmann necklace, $\M$ the associated positroid.
Let $\C$ be a subset of $\binom{[n]}{k}$.
Then $\C$ is a maximal weakly separated collection in $\M$ if and only if $\C$ is of the form $\f(G)$ for a plabic graph $G$ whose decorated permutation, $\pi_G$ corresponds to the Grassmann necklace $\I$.
\end{theorem}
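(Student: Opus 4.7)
The ``if'' direction follows immediately from Theorem~\ref{thrm:IsMaximal} combined with Proposition~\ref{prop:PartConverse}(1), which pins down the decorated permutation of $G$ as the one associated to $\I$. For the ``only if'' direction, my plan is to construct a reduced plabic graph $G$ with $\f(G) = \C$ as the planar dual of the plabic tiling $\Sigma(\C)$. First, Propositions~\ref{SplittingSummary} and~\ref{GlueAtPoint} let me reduce to the case that $\I$ is connected, since a disconnected $\I$ corresponds to splitting $\C$ at the repeated necklace entries and gluing the resulting plabic graphs at boundary faces. Assume now that $\I$ is connected, and let $R$ be the closed region enclosed by the simple closed polygonal curve $\pi(\I)$. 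By Propositions~\ref{prop:embed} and~\ref{Winding}, $\pi$ embeds $\Sigma(\C)$ into $R$.

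The heart of the argument is to show $\pi(\Sigma(\C)) = R$, i.e.\ that the tiling has no holes. Suppose for contradiction that a connected component $H$ of $R \setminus \pi(\Sigma(\C))$ is nonempty. Since $\partial R \subset \pi(\Sigma(\C))$, the boundary $\partial H$ consists of edges of $\pi(\Sigma(\C))$, so I can pick such a boundary edge $\pi(Ex)\pi(Ey)$ for some $E \in \binom{[n]}{k-1}$ and distinct $x, y \in [n]\setminus E$. Apply Lemma~\ref{lem:key}: provided $E \ne (x,y)$, it produces a set $J$ (either $M^a = Ea$ with $a \in (x,y)\setminus E$, or $N^b = Exy\setminus b$ with $b \in (y,x)\cap E$) weakly separated from all of $\C$. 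The displacement computation in the style of Lemma~\ref{lem:vecinj}, together with the orientation convention of Remark~\ref{rem:circ}, will show that $\pi(J)$ lies strictly on the $H$-side of the edge; in particular $\pi(J)$ sits in the interior of $R$, so by Proposition~\ref{Winding}, $J \in \M$. Moreover $J \notin \C$, since otherwise the edge would already border a 2-dimensional face of $\Sigma(\C)$ on the $H$-side, contradicting that the edge lies on $\partial H$. Thus $\C \cup \{J\}$ is a larger weakly separated collection in $\M$, contradicting maximality. The degenerate case $E = (x,y)$ must be disposed of separately: then $Ex$ and $Ey$ are cyclic intervals, and one checks directly from the structure of $\Sigma(\C)$ that such an edge cannot bound a genuine hole inside $R$.

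Once $\pi(\Sigma(\C)) = R$ is established, I would construct $G$ by placing a vertex inside each 2-dimensional face of $\Sigma(\C)$ (colored according to whether the clique is white or black), drawing an edge across each edge of $\Sigma(\C)$ shared between two faces, and attaching half-edges through $\partial R$ for the edges on $\pi(\I)$. Reading Theorem~\ref{thrm:dual} in reverse, this $G$ is a bipartite reduced plabic graph with $\f(G) = \C$, and the boundary identification of Proposition~\ref{prop:PartConverse}(1) guarantees its decorated strand permutation corresponds to $\I$. The main obstacle in the plan is the hole step: simultaneously verifying that the set $J$ supplied by Lemma~\ref{lem:key} lands on the correct side of $\pi(e)$, that $J$ is genuinely new rather than already a vertex of $\Sigma(\C)$, and that the degenerate case $E = (x,y)$ cannot occur at a hole boundary. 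This is precisely the payoff for the technical apparatus of Section~\ref{sec:tilings} and the combinatorial machinery built in Section~\ref{sec:key}.
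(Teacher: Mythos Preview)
Your overall architecture matches the paper's proof, but there are two genuine gaps.

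\textbf{Gap 1: the ``no holes'' step.} You write that once Lemma~\ref{lem:key} produces $J$, the displacement computation shows $\pi(J)$ lies on the $H$-side of the edge, ``in particular $\pi(J)$ sits in the interior of $R$, so by Proposition~\ref{Winding}, $J\in\M$.'' This inference is invalid. Being on the $H$-side of the single edge $\pi(Ex)\pi(Ey)$ only tells you which half-plane $\pi(J)$ lies in locally; it does \emph{not} force $\pi(J)\in R$. Indeed, in the paper's proof of Proposition~\ref{NoHoles} the case $J\notin\M$ genuinely arises and requires a separate argument: if $J\notin\M$ then $\pi(J)$ lies outside $\pi(\I)$ by Proposition~\ref{Winding}, so the triangle $(\pi(Ex),\pi(Ey),\pi(J))$ must cross some boundary segment $(\pi(I_i),\pi(I_{i+1}))$, and one then applies Proposition~\ref{prop:embed} to the enlarged collection $\C\cup\{J\}$ (with a small subtlety when all four points lie in a common clique). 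You have skipped this entire branch. Without it your contradiction does not close, since maximality of $\C$ inside $\M$ gives you nothing when $J\notin\M$.

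\textbf{Gap 2: reducedness of $G$.} You propose to obtain that the dual graph $G$ is a reduced plabic graph with $\f(G)=\C$ by ``reading Theorem~\ref{thrm:dual} in reverse.'' But Theorem~\ref{thrm:dual} is a one-directional statement: it starts from a reduced bipartite plabic graph $G$ and identifies $\Sigma(\f(G))$ with the dual of $G$. It does not assert that the planar dual of an arbitrary plabic tiling is reduced, nor that its face labels in the sense of $\f(\cdot)$ coincide with the vertex labels inherited from $\Sigma(\C)$. The paper proves both of these directly: it checks that along each strand the adjacent labels $E_i a$ satisfy $E_i <_a E_{i+1}$, ruling out self-crossings and closed loops; it checks that two strands $\alpha,\beta$ meeting twice do so in opposite orders along each, using weak separation of $Sa,Sb,Ta,Tb$; and it then separately verifies that the $\Sigma(\C)$-labels agree with $\f(G)$. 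None of this is contained in Theorem~\ref{thrm:dual}, and you need to supply it.

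The remaining pieces of your outline (reduction to the connected case, the handling of the boundary edge to rule out $E$ being a cyclic interval, and the overall dual-graph construction) are in line with the paper.
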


See section~\ref{sec:Grneck} to review the definitions of positroids, Grassmann necklaces and decorated permutations.
%As discussed in section REFERENCE, this implies Theorem~\ref{thm:GSConjecture} and thus the conjectures CITE, CITE, CITE and CITE.

By Theorem~\ref{thrm:IsMaximal}, if $\C$ is of the form $\f(G)$, then $\C$ is a maximal weakly separated collection in $\M$.
So our goal is to prove the converse.
Suppose that $\C$ is a maximal weakly separated collection in $\M$.

We first reduce to the case that $\I$ is connected.
Suppose that $I_i=I_j$. 
We reuse the notations $I^1$, $I^2$ and so forth from section~\ref{Connected}.
So, by Proposition~\ref{SplittingSummary}, $\C^1$ and $\C^2$ are maximal weakly separated collections in  $\M^1$ and $\M^2$. 
By induction, there are reduced plabic graphs $G^1$ and $G^2$, with $f(G^r) = \C^r$,  and with boundary regions labeled $I^1$ and $I^2$. 
Gluing these graphs along these boundary regions, we have a reduced plabic graph $G$ with $f(G) = \C$.

Thus, we may now assume that $\I$ is connected.
Form the $CW$-complex $\Sigma(\C)$. 
By Proposition~\ref{prop:embed}, the map $\pi$ embeds $\Sigma(\C)$ into $\RR^2$. 
By Proposition~\ref{Winding}, for every $J \in \C \setminus \I$, the point $\pi(J)$ is inside the curve $\pi(\I)$.
Since these are the vertices of $\Sigma(\C)$, we see that $\Sigma(\C)$ lies within this curve.

Thus, $\Sigma(\C)$ is a finite polyhedral complex in $\RR^2$, with outer boundary $\pi(\I)$.
We will eventually take $G$ to be the dual graph to $\Sigma(\C)$.
In order to do this, we need the following proposition:

\begin{proposition} \label{NoHoles}
If $\C$ is a maximal weakly separated collection for the connected positroid $\M$, then $\Sigma(\C)$ fills the entire interior of the curve $\pi(\I)$.
\end{proposition}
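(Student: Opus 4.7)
The plan is to argue by contradiction. Suppose $\Sigma(\C)$ does not fill the entire interior of $\pi(\I)$, so that the complement of $\pi(\Sigma(\C))$ inside the curve $\pi(\I)$ contains a nonempty open connected component $H$, which I call a \emph{hole}. Since $\pi(\I)$ itself is covered by $\pi(\Sigma(\C))$ (the Grassmann necklace sits inside $\C$, and each consecutive pair $I_r, I_{r+1}$ is linked by an edge of $\Sigma(\C)$ by Lemma~\ref{lem:nbhr}), the boundary $\partial H$ is made up entirely of edges of $\Sigma(\C)$. My goal is to exhibit a set $J \in \M \setminus \C$ that is weakly separated from $\C$, contradicting the maximality of $\C$ in $\M$.

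First I would pick an edge $e$ on $\partial H$. Since $H$ sits on the side of $e$ opposite any face of $\Sigma(\C)$, only one clique can contribute a face at $e$. By the complement-and-swap duality between white and black cliques, it suffices to handle the case that $e$ borders a white face $\W(E)$. Then $e = (Ex, Ey)$ for some $E \in \binom{[n]}{k-1}$ and $x, y \in [n] \setminus E$ with $x, y$ cyclically consecutive among $\{a : Ea \in \C\}$; by Remark~\ref{rem:circ}, $H$ lies on the \emph{$(x,y)$-side} of the chord from $v_x$ to $v_y$. I would then verify that $E \neq (x,y)$ (otherwise $\W(E)$ reduces to $\{Ex, Ey\}$, is trivial, and so produces no face, contradicting that $\W(E)$ borders $e$) and apply Lemma~\ref{lem:key} to obtain a set $J$ weakly separated from $\C$ of one of the two forms $M^a = Ea$ with $a \in (x,y) \setminus E$ or $N^b = Exy \setminus \{b\}$ with $b \in E \cap (y,x)$.

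It then remains to verify $J \in \M \setminus \C$. For $J \notin \C$: if $M^a \in \C$, then $a$ would appear as a vertex of $\W(E)$ cyclically strictly between $x$ and $y$, contradicting their cyclic adjacency in $\W(E)$; if $N^b \in \C$, then $\B(Exy)$ would be nontrivial and give a black face on the hole side of $e$, contradicting that $H$ sits there. For $J \in \M$ I would combine two ingredients. A direct vector calculation shows that $\pi(J) - \tfrac{1}{2}(\pi(Ex)+\pi(Ey))$ is a nonzero vector on the $(x,y)$-side of $v_x v_y$, so $\pi(J)$ lies on the hole-side of $e$. On the other hand, since $\C \cup \{J\}$ is weakly separated, Proposition~\ref{prop:embed} applied to $\Sigma(\C \cup \{J\})$ says that the enlarged complex embeds; combined with Lemma~\ref{WriteThis}, this forces $\pi(J)$ to lie off $\pi(\Sigma(\C))$ entirely, hence in some open complementary region. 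Together these put $\pi(J)$ in the interior of $\pi(\I)$, so Proposition~\ref{Winding} gives $J \in \M$, completing the contradiction.

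The main obstacle will be the final geometric step: rigorously ruling out that $\pi(J)$ could escape to the \emph{exterior} of $\pi(\I)$ instead of landing in a hole. The location of $\pi(J)$ is controlled up to a translation by a vector of the form $v_a - v_x$ or $(v_x+v_y)/2 - v_b$, which are short vectors between vertices of the ambient $n$-gon, so intuitively $\pi(J)$ stays close to the edge $e \subset $ interior of $\pi(\I)$. I expect the rigorous argument to require either tracing the straight line from a point of $H$ near $e$ toward $\pi(J)$ and ensuring it never crosses $\pi(\I)$, or else checking $J \in \M$ combinatorially via the inequalities $I_i \leq_i J$ using that $Ex, Ey \in \M$ and $a$ sits between $x$ and $y$ cyclically. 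Either route should close the argument and establish Proposition~\ref{NoHoles}.
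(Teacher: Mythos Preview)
Your overall strategy matches the paper's: assume a hole, pick a boundary edge $(Ex,Ey)$, apply Lemma~\ref{lem:key} to produce a $J$ weakly separated from $\C$, and derive a contradiction.  Two points deserve comment.

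\medskip

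\textbf{A minor correction.}  Your argument that $E\neq(x,y)$ is wrong: $E=(x,y)$ in no way forces $\W(E)=\{Ex,Ey\}$, since there may be many $a\in(y,x)$ with $Ea\in\C$.  The paper's reason is different and simpler: if $(Ex,Ey)$ were a boundary edge $(I_i,I_{i+1})$ of the curve $\pi(\I)$, the hole would lie outside $\pi(\I)$; since it doesn't, the edge is not of that form, and this is exactly what rules out the degenerate interval case.  Relatedly, your white/black case split and separate verification that $J\notin\C$ are unnecessary.  The paper treats both forms of $J$ at once: the only property used is that the triangle $(\pi(Ex),\pi(Ey),\pi(J))$ lies on the hole side, and if $J\in\C$ this triangle sits inside a face of $\Sigma(\C)$, immediately contradicting that $(Ex,Ey)$ bounds the hole.

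\medskip

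\textbf{The main gap.}  You are right that the obstacle is excluding that $\pi(J)$ lands \emph{outside} $\pi(\I)$.  Your proposed attacks (tracing a segment from the hole toward $\pi(J)$, or checking $I_i\leq_i J$ combinatorially) are not how the paper closes this, and neither one goes through easily: the ``short vector'' heuristic fails because the necklace curve can be highly nonconvex, and the combinatorial check would have to use much more about $\M$ than just $Ex,Ey\in\M$.  The paper's device is to argue the contrapositive.  Assume $J\notin\M$; then Proposition~\ref{Winding} puts $\pi(J)$ outside $\pi(\I)$, so the triangle $(\pi(Ex),\pi(Ey),\pi(J))$ must cross some boundary edge $(\pi(I_i),\pi(I_{i+1}))$ --- say the segment $(\pi(Ex),\pi(J))$ crosses it.  Now $(Ex,J)$ and $(I_i,I_{i+1})$ are each contained in a clique of the weakly separated collection $\C\cup\{J\}$, so Proposition~\ref{prop:embed} applied to $\Sigma(\C\cup\{J\})$ forbids the crossing unless all four sets lie in a single clique with $(Ex,I_i,J,I_{i+1})$ in that cyclic order around its face.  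In that residual case the triangle $(Ex,I_i,I_{i+1})$ already lies in $\Sigma(\C)$ on the hole side of $(Ex,Ey)$, again contradicting that the edge bounds the hole.  This clique-crossing argument is the missing idea.
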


\begin{figure}
\centerline{\scalebox{0.7}{
\psfrag{123}[cc][cc]{\Huge $123$}
\psfrag{234}[cc][cc]{\Huge $234$}
\psfrag{345}[cc][cc]{\Huge $345$}
\psfrag{456}[cc][cc]{\Huge $456$}
\psfrag{156}[cc][cc]{\Huge $156$}
\psfrag{126}[cc][cc]{\Huge $126$}
\psfrag{125}[cc][cc]{\Huge $125$}
\psfrag{135}[cc][cc]{\Huge $135$}
\psfrag{235}[cc][cc]{\Huge $235$}
\psfrag{U}[cc][cc]{\Huge $U$}
\psfrag{Ex}[cc][cc]{\Huge $Ex$}
\psfrag{Ey}[cc][cc]{\Huge $Ey$}
\includegraphics{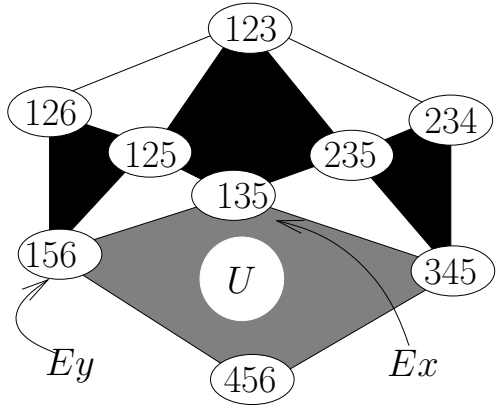}}}
\caption{A plabic tiling for a non-maximal $\C$} \label{Incomplete}
\end{figure}

\begin{example} \label{IncompleteExample}
Consider the weakly separated collection 
$$\C=(123, 234, 345, 456, 156, 126, 125, 135, 235)$$
 for the Grassmann necklace $(123, 234, 345, 456, 156, 126)$. 
This collection only has $9$ elements, not $10$ which a maximal collection would have.
Figure~\ref{Incomplete} shows the corresponding $\Sigma(\C)$, embedded in $\RR^2$.
The grey region, labeled $U$, is not in $\Sigma(\C)$, but note that all four edges of its boundary are in $\Sigma(\C)$.
In the vocabulary of the proof of Proposition~\ref{NoHoles}, we take $E=\{ 1, 5 \}$, $x=3$ and $y=6$. Our strategy is to use Lemma~\ref{lem:key} to show that $\C$ must be weakly separated from one of $145$, which is $E \cup 4 $, or $356$, which is $Exy \setminus 1$. 
(In fact, it is weakly separated from both.)
\end{example}

\begin{proof}[Proof of Proposition~\ref{NoHoles}]
Let $N$ denote the interior of $\pi(\I)$.
Suppose, to the contrary, that $U$ is a connected component of $N \setminus \pi(\Sigma(\C))$. 
Let $(\pi(Ex), \pi(Ey))$ be an edge in $\partial U$, with $U$ on the lefthand side as we look from $\pi(Ex)$ to $\pi(Ey)$. 
Note that $(Ex, Ey)$ is not of the form $(I_{i}, I_{i+1})$ as, if it were, then $U$ would be on the outside of $\pi(\I)$. In particular, $E$ is not the interval  $(y,x)$.

By Lemma~\ref{lem:key}, there is a set $J$, either of the form $Ea$, with $a \in (x,y) \setminus E$, or the form $Exy \setminus b$, with $b \in (y,x) \cap E$, such that $J$ is weakly separated from $\C$. The conditions on $a$ and $b$ imply that the triangle $(\pi(Ex), \pi(Ey), \pi(J))$ lies on the $U$ side of $(\pi(Ex), \pi(Ey))$. (The reader is invited to check this in Example~\ref{IncompleteExample}.)
If $J \in \M$ then $J$ must be in $\C$, by the maximality of $\C$.
But then the triangle $(\pi(Ex), \pi(Ey), \pi(J))$ is part of $\Sigma(\C)$, and lies on the $U$-side of $(\pi(Ex), \pi(Ey))$, contradicting the supposition that $(\pi(Ex), \pi(Ey))$ is a boundary edge of $U$.
 So $J \not \in \M$. (At this point, we are already done if $\M$ is the uniform matroid.)

We have shown that $J \not \in \M$.
By proposition~\ref{Winding}, $\pi(J)$ lies outside of $\pi(\I)$.
In particular, the triangle $(\pi(Ex), \pi(Ey), \pi(J))$ must cross some boundary edge $(\pi(I_i), \pi(I_{i+1}))$.
Let's say that segments $(\pi(Ex), \pi(J))$ and $(\pi(I_i), \pi(I_{i+1}))$ cross (the case of $Ey$ is practically identical).
Note that $(Ex, J)$ and $(I_i, I_{i+1})$ are both subsets of cliques in the weakly separated collection $\C \cup \{ J \}$. 
It almost contradicts Proposition~\ref{prop:embed} for these two segments to cross.
The only subtlety is that $(Ex, J)$ and $(I_i, I_{i+1})$ might be contained in the same clique of $\C \cup \{ J \}$, with $(Ex, I_i, J, I_{i+1})$ occurring in that circular order around the corresponding face of $\Sigma(\C \cup \{ J \})$. 
But then the triangle $(Ex, I_i, I_{i+1})$ is contained in $\Sigma(\C)$, and lies on the $U$-side of $(Ex, Ey)$, contradicting that $(Ex, Ey)$ is supposed to be on the boundary of $U$.
\end{proof}

So, $\Sigma(\C)$ is a disc, whose two faces are naturally colored black and white. 
Let $G$ be the dual graph, so the faces of $G$ are labeled by $\C$, and the boundary faces by $\I$.
There are $n$ vertices on the boundary of $G$, labeled by $[n]$, with vertex $i$ between faces $I_i$ and $I_{i+1}$.

We now show that $G$ is reduced. Be warned that, at this point, we do not know that the labels of the faces of $G$ inherited from $\Sigma(\C)$ are the labels $\f(G)$.
Let $\gamma$ be a strand which separates $Ea$ and $Ex$, for some $x$, and then goes on to separate $Ea$ from some $Ey$.
As we travel along $\gamma$, the faces on the  left are of the form $E_i a$ for some sequence $E_i$, with $|E_i \setminus  E_{i+1}| = 1$, and the faces on the right are of the form $E_{i} \cup E_{i+1}$. 
%We leave it to the reader to check that $E_i <_a E_{i+1}$. 

We have $E_i <_a E_{i+1}$ due to the definition of cliques in plabic tilings. Therefore, $\gamma$ can not loop or cross itself; it must travel from the boundary of $N$ to the boundary of $N$. 
In particular, we see that there are only $n$ strands in total, one ending between each pair of boundary faces of $G$.

The strand $\gamma$ must start separating $I_{\pi^{-1}(a)}$ from $I_{\pi^{-1}(a)+1}$, and end separating $I_a$ from $I_{a+1}$.
That is because these are the only pairs of adjacent faces at the boundary of $G$ which differ by an $a$.
Thus, the strand we have called $\gamma$ is the strand labeled by $a$ in our standard way of labeling the strands of a plabic graph.

Now fix two strands $\alpha$ and $\beta$ which intersect at least twice, labeled $a$ and $b$. We want to show that the intersections occur in reverse order along $\alpha$ and $\beta$. At one crossing, let the strands separate faces $Sa$ and $Sb$; at the other crossing let the adjacent faces be $Ta$ and $Tb$. From the fact that $Sa \parallel Tb$ and $Sb \parallel Ta$, we get that one of the following holds:
\begin{itemize}
\item $S \setminus T \subset (a,b)$ and $T \setminus S \subset (b,a)$ or
\item $T \setminus S \subset (a,b)$ and $S \setminus T \subset (b,a)$.
\end{itemize}

Assuming the first case holds, we get $S <_a T$ and $T <_b S$. So the intersections occur in opposite order along $\alpha$ and $\beta$. 
We also conclude that the intersections occur in opposite order in the second case.
We have now checked that strands do not cross themselves, do not form closed loops, and that the intersections along any pair of strands occur in opposite order along the two strands. 
So $G$ is reduced.

Finally, we must check that the labels coming from $\Sigma(\C)$ are the same as the labels $\f(G)$. 
In this paragraph, when we refer to the label of a face of $G$, we mean the label $I$ of the dual vertex of $\Sigma(\C)$.
Consider a strand $\gamma$, with label $a$. 
We must check that the faces to the left of $\gamma$ all contain $a$, and those to the right do not.
The faces immediately to the left of $\gamma$ all contain $a$, and those immediately to the right do not.
Let $\Delta$ be the union of all faces that contain $a$, so $\gamma$ is part of the boundary of $\Delta$, as is part of $\partial N$.
We claim that, in fact, this is the entire boundary of $\Delta$. If there were any other edge $e$ in the boundary of $\Delta$, since $G$ is a disc with boundary $\partial N$, there would have to be a face on the other side of $e$ from $\Delta$, and the label of this face would not contain $a$. But then one of the strands passing through this edge would be $\gamma$. Similarly, the boundary of $G \setminus \Delta$ is also made up of $\gamma$ and a piece of $\partial N$. It is now topologically clear that $\Delta$ is the part of $N$ on one side of $\gamma$, as desired.

So starting from a maximal weakly separated collection $\C$ for $\M$, we have obtained a reduced plabic graph $G$ with $\f(G) = \C$. This completes the proof of Theorem~\ref{thm:WSisFGPos}. \qedsymbol.

\smallskip

We have now proved Theorem~\ref{thm:WSisFGPos}. 
By Theorem~\ref{thm:P}, we now know that every maximal weakly separated collection of $\M_{\I}$ has cardinality $\ell(\I) + 1$ and any two maximal weakly separated collections of $\M$ are mutation equivalent.
This is Theorem~\ref{thm:GSConjecture} and our work is complete.

Before we end, we state a simple result that follows from Theorem~\ref{thm:WSisFGPos}.

\begin{corollary}
\label{cor:WSAlign}
Let $\I$ be a Grassmann necklace, $\pi^:$ the associated decorated permutation, and $\M$ the associated positroid. We define $\PCh(\I)$ to be the union of $\F(G)$ for all reduced plabic graphs $G$ of $\M$. Then $J \in \M$ is in $\PCh(\I)$ if and only if it obeys the following condition: For any alignment $\{i,j\}$ in $\pi$, if $i \in J$ then we have $j \in J$.
\end{corollary}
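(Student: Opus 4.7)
The plan is to reduce the corollary, via Theorem~\ref{thm:WSisFGPos}, to a combinatorial equivalence. That theorem identifies $\PCh(\I) = \bigcup_G \F(G)$ with the union of all maximal weakly separated collections inside $\M$. Every such collection contains $\I$ by definition, and any weakly separated collection $\I \subseteq \C \subseteq \M$ extends to a maximal one, so
\[
\PCh(\I) = \{J \in \M : J \parallel I_s \text{ for all } s \in [n]\}.
\]
It therefore suffices to show that, for $J \in \M$, weak separation from every $I_s$ is equivalent to the alignment condition.

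For the forward direction, realize $J$ as a face label $J = \f_F(G)$ in some reduced plabic graph $G$ with strand permutation $\pi^{:}$. For an alignment $\{i,j\}$ in $\pi$, the two strands of $G$ picked out by the alignment have non-interlocking endpoints on the boundary, and the reducedness axiom~\ref{OppositeOrder} then forces them not to cross at all (any crossing would contradict the cyclic order $(i,\pi(i),\pi(j),j)$). Two non-crossing strands split the disk into three regions, and reading off the cyclic order of their endpoints shows that the ``left'' region of one strand is nested inside the ``left'' region of the other, which directly yields the required face-label implication.

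For the backward direction, I argue by contrapositive. Suppose $J \in \M$ fails $J \parallel I_s$ for some $s$, and pick a witness $a <_s b <_s a' <_s b'$ (cyclically ordered from $s$) with $\{a,a'\} \subseteq I_s \setminus J$ and $\{b,b'\} \subseteq J \setminus I_s$. The defining formula $j \in I_s$ iff $j <_s \pi^{-1}(j)$ (or $j$ a black fixed point) forces $\pi^{-1}(a), \pi^{-1}(a') >_s a, a'$ while $\pi^{-1}(b), \pi^{-1}(b') \leq_s b, b'$; combining these with the cyclic order of the witness shows that $(\pi^{-1}(b), b, a', \pi^{-1}(a'))$ is itself a cyclically ordered quadruple, exhibiting $\{b, a'\}$ as an alignment in $\pi$. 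Since $b \in J$ but $a' \notin J$, the alignment condition is violated, a contradiction. The main obstacle is the handful of degenerate configurations in which some of the four elements $\pi^{-1}(b), b, a', \pi^{-1}(a')$ coincide (e.g., $b$ a white fixed point of $\pi$, or $\pi(a') = b$); in each case the argument is rescued by swapping $b$ for $b'$ or $a'$ for $a$ and invoking an alternative alignment pair drawn from the same witness quadruple $(a,b,a',b')$, and a short case analysis shows that at least one such choice always succeeds.
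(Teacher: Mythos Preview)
Your approach is essentially the paper's: reduce via Theorem~\ref{thm:WSisFGPos} to showing that weak separation of $J$ from every $I_s$ is equivalent to the alignment condition, argue the forward implication from plabic-graph face labels, and prove the backward implication by contrapositive using the membership formula $j\in I_s \Leftrightarrow j <_s \pi^{-1}(j)$.

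The one notable difference is that the paper's backward argument is considerably leaner. Rather than carrying the full witness quadruple $(a,b,a',b')$ and then invoking an unperformed ``short case analysis'' to handle coincidences among $\pi^{-1}(b),b,a',\pi^{-1}(a')$, the paper simply picks a single pair $k\in J\setminus I_i$, $t\in I_i\setminus J$ with $(i,k,t)$ cyclically ordered --- such a pair can always be extracted from any failure of $J\parallel I_i$ --- reads off $\pi^{-1}(k)\le_i k$ and $\pi^{-1}(t)\ge_i t$, and obtains the cyclically ordered string $(i,\pi^{-1}(k),k,t,\pi^{-1}(t))$ and the violating alignment in one stroke. This sidesteps your degeneracy bookkeeping entirely; you may wish to adopt the same shortcut.

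One point worth tightening in your forward argument: be careful about which two strands the alignment $\{i,j\}$ actually singles out. Strand $i$ runs from boundary vertex $\pi^{-1}(i)$ to $i$, so the cyclic order $(i,\pi(i),\pi(j),j)$ constrains the endpoints of the strands starting at $i$ and $j$ (that is, strands $\pi(i)$ and $\pi(j)$), not those of strands $i$ and $j$ themselves. The same care is needed when you declare ``$\{b,a'\}$ is an alignment'' from the quadruple $(\pi^{-1}(b),b,a',\pi^{-1}(a'))$: matched against the definition $i,\pi(i),\pi(j),j$, that quadruple literally names the pair $\{\pi^{-1}(b),\pi^{-1}(a')\}$. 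The paper is equally brisk about this identification, so you are in good company, but it is worth getting the bookkeeping straight.
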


\begin{proof}
One direction is obvious: For any $J \in \F(G)$ for some plabic graph $G$ of $\M$, it should meet the condition due to the way the faces are labeled in plabic graphs. 

So we only need to check the other direction. Pick any $J \in \M$ that satisfy the condition. Assume for the sake of contradiction that $J$ is not weakly separated with some $I_i$. This means that there is some $k \in J \setminus I_i$ and $t \in I_i \setminus J$ such that $i,k,t$ is circularly ordered. The condition $k \not \in I_i$ tells us that $\pi^{-1}(k) \leq_i k$, while the condition $t \in I_i$ tells us that $\pi^{-1}(t) \geq_i t$. So $i,\pi^{-1}(k),k,t,\pi^{-1}(t)$ are circularly ordered and $\{k,t\}$ forms an alignment of $\pi$. Then $J$ should contain both $k$ and $t$ due to the condition, but this contradicts $t \not \in J$. So $J$ is weakly separated from all $I_i$'s, and $\{I_1,\cdots,I_n,J\}$ is a weakly separated collection. Then Theorem~\ref{thm:WSisFGPos} tells us that there is some $\F(G)$ that contains $J$.

\end{proof}

\section{Connection with $w$-chamber sets of Leclerc and Zelevinsky}
\label{sec:Connection_with_LZ}

In this section, we state Leclerc and Zelevinsky's conjectures on $w$-chamber sets and explain how they follow from Theorem~\ref{thm:GSConjecture}.

Given two sets of integers, $I$ and $J$, we write $I \LZprec J$ if $i<j$ for all $i \in I$ and all $j \in J$. We will write $\LZprec_i$ for $\LZprec$ with respect to the shifted order $<_i$ on $[n]$.
The following definition will only be necessary in this section; it is an extension of the definition of weak separation to sets of unequal cardinality.

\begin{definition}
We say that $I,J \subset [m]$ are weakly separated in the sense of Leclerc and Zelevinsky if at least one of the following holds:
\begin{enumerate}
\item $|I| \geq |J|$ and $J - I$ can be partitioned into a disjoint union $J \setminus I = J' \sqcup J''$ so that $J' \LZprec I \setminus J \LZprec J''.$
\item $|J| \geq |I|$ and $I - J$ can be partitioned into a disjoint union $I \setminus J = I' \sqcup I''$ so that $I' \LZprec J \setminus I \LZprec I''.$
\end{enumerate}

We will write $I \LZpar J$ to denote that $I,J$ are weakly separated (in the sense of Leclerc and Zelevinsky). 
\end{definition}

For example, $\{1,2\} \LZpar \{3\}$ but $\{1,4\} \not \LZpar \{3\}$.

\begin{definition}
Let $w$ be a permutation in the symmetric group $S_m$.
The $w$-chamber set, $\Ch(w)$, is defined to be the collection of those subsets $I$ of $[m]$ 
that satisfy the following condition:  
for every pair $a < b$ with $w(a) < w(b)$, if $a \in I$ then $b \in I$.
\end{definition}

\begin{remark}
The attentive reader will notice that our definition of a $w$-chamber set differs from that
in~\cite{LZ} by switching the directions of all inequalities.  Thus, the
$w$-chamber set in our notation is the image of Leclerc and Zelevinsky's $(w_0
w w_0)$-chamber set under the map $i \mapsto n+1-i$.  This flip has to be
included somewhere, due to an incompatibility in the sign conventions
of~\cite{LZ} and~\cite{Post}.
\end{remark}

\begin{definition} 
A weakly separated collection $\C$ of $\Ch(w)$ is a
collection of subsets in $\Ch(w)$ such that the elements of $\C$ are pairwise
weakly separated (in the LZ sense). A maximal weakly separated collection of
$\Ch(w)$ is a weakly separated collection of $\Ch(w)$ that is maximal under
inclusion.  
\end{definition}

Here is the original conjecture of Leclerc and Zelevinsky.

\begin{theorem} 
\label{thm:LZConjecture}
Any maximal weakly separated collection of $\Ch(w)$ is of cardinality
$n+\ell(w) + 1$, where $\ell(w)$ is the length of $w$.  
\end{theorem}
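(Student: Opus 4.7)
The plan is to deduce Theorem~\ref{thm:LZConjecture} from Theorem~\ref{thm:GSConjecture} via a padding construction that embeds LZ weak separation on $[n]$ into Scott cyclic weak separation on $[2n]$.

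First, I would define the padding map $\mathrm{pad}\colon 2^{[n]} \to \binom{[2n]}{n}$ by $\mathrm{pad}(I) = I \cup \{n+1,n+2,\ldots,2n-|I|\}$, so that each subset is padded to size $n$ by appending an initial segment of the ``top'' arc. The key combinatorial lemma is that this map preserves weak separation: $I \LZpar J$ if and only if $\mathrm{pad}(I) \parallel \mathrm{pad}(J)$. To prove this I would do a case analysis on where the four elements of a putative cyclic $4$-tuple violation on $[2n]$ lie (all bottom, mixed, or all top): violations fully in the bottom arc $[n]$ correspond to linear alternations that encode the $|I|=|J|$ case of LZ; mixed violations, where exactly one element comes from the asymmetry of the top paddings, encode the $QPQ$ pattern failure for $|I|\neq|J|$; and the all-top case is excluded by the initial-segment structure of the padding.

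Second, given $w\in S_n$, I would construct a decorated permutation $\pi_w^{\colon}$ on $[2n]$ by closing up the reduced wiring diagram of $w$ into a disk, with $n$ boundary vertices on the bottom and $n$ on the top, so that the strand entering at bottom vertex $i\in[n]$ exits at top vertex $2n+1-w(i)$. Using Corollary~\ref{cor:WSAlign}, I would identify $\mathrm{pad}(\Ch(w))$ with $\PCh(\I(\pi_w^{\colon}))$: the $w$-chamber condition on $I$ translates under padding exactly to the alignment condition on $\mathrm{pad}(I)\in \M_{\pi_w^{\colon}}$, since the internal alignments of $\pi_w^{\colon}$ between pairs of strands are in bijection with non-inversions of $w$ (pairs $a<b$ with $w(a)<w(b)$). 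A count of alignments then gives $A(\pi_w^{\colon})=n(n-1)-\ell(w)$, hence
\[
\ell(\pi_w^{\colon}) \;=\; n\cdot n - A(\pi_w^{\colon}) \;=\; n+\ell(w).
\]

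Finally, applying Theorem~\ref{thm:GSConjecture} together with Corollary~\ref{cor:WSAlign} to $\M_{\pi_w^{\colon}}$, every maximal weakly separated collection contained in $\PCh(\I(\pi_w^{\colon}))=\mathrm{pad}(\Ch(w))$ has cardinality $\ell(\pi_w^{\colon})+1=n+\ell(w)+1$. Since $\mathrm{pad}$ is injective and preserves weak separation in both directions, pulling back yields the desired cardinality for maximal LZ-weakly-separated collections of $\Ch(w)$.

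The main obstacle will lie in Step~2: verifying the identification $\mathrm{pad}(\Ch(w))=\PCh(\I(\pi_w^{\colon}))$ for the correct $\pi_w^{\colon}$, and checking that alignments of $\pi_w^{\colon}$ match non-inversions of $w$ to give the length formula. The padding lemma of Step~1, while elementary, also requires a patient case analysis across the ``bottom/mixed/top'' trichotomy to match up the cyclic and linear versions of the separation condition.
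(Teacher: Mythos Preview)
Your proposal is correct and follows essentially the same approach as the paper: pad subsets of $[n]$ to $n$-element subsets of $[2n]$, show padding intertwines LZ weak separation with cyclic weak separation, associate to $w$ a fixed-point-free decorated permutation on $[2n]$, identify $\pad{\Ch(w)}$ with $\PCh$ of the corresponding necklace via the alignment criterion of Corollary~\ref{cor:WSAlign}, compute $\ell(\pi_w^{\colon})=n+\ell(w)$ by counting alignments, and invoke Theorem~\ref{thm:GSConjecture}. The only differences are cosmetic conventions (the paper pads with a terminal rather than initial segment of the top arc and accordingly uses the permutation $\hat{w}=[2n,\ldots,n+1,w^{-1}(n),\ldots,w^{-1}(1)]$), which are related to yours by a dihedral symmetry.
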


Subsets of $[m]$ of any cardinality can be identified with certain $m$ element
subsets of $[2m]$ through a simple padding construction, as follows.

\begin{definition}
Given $I \subset [m]$, we define $\pad{I}$ to be $I \cup \{2m,\cdots,m+|I| -1 \}$. Given a collection $\C \subset 2^{[m]}$, we define $\pad{\C}$ to be the collection of $\pad{I}$ for each $I \in \C$.
\end{definition}

For example, if $m=4$, then $\pad{\{1,3\}} = \{1,3,7,8 \}$. 

Let $n = 2m$ and $k=m$.  The padding allows
us to reduce weak separation in the sense of Leclerc and Zelevinsky to the
notion of weak separation of elements in $\binom{[n]}{k}$ which we use in the rest of the paper.
Specifically, we have:
\begin{lemma}
Given $I$ and $J \subset [m]$, we have $I \LZpar J$ if and only if $\pad{I} \parallel \pad{J}$.
\end{lemma}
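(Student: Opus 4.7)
The plan is to compute $\pad{I} \setminus \pad{J}$ and $\pad{J} \setminus \pad{I}$ explicitly, then match the ``chord'' characterization of $\parallel$ against the partition characterization of $\LZpar$. Without loss of generality assume $i := |I| \geq |J| =: j$, the opposite case being symmetric. Since the padding $\{m+i+1, \ldots, 2m\}$ of $I$ is contained in the padding $\{m+j+1, \ldots, 2m\}$ of $J$, one computes
$$\pad{I} \setminus \pad{J} = I \setminus J \subseteq [m], \qquad \pad{J} \setminus \pad{I} = (J \setminus I) \cup P,$$
where $P := \{m+j+1, \ldots, m+i\} \subseteq (m, 2m]$. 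In particular $P = \emptyset$ iff $i = j$.

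For the forward implication, suppose $I \LZpar J$ and use case (1) of the LZ definition to write $J \setminus I = J' \sqcup J''$ with $J' \LZprec I \setminus J \LZprec J''$. If $I \setminus J$ is empty, then $\pad{I} \setminus \pad{J} = \emptyset$ and weak separation is automatic. Otherwise let $a := \min(I \setminus J)$ and $b := \max(I \setminus J)$ in the natural order on $[m]$. Then $J' \subseteq [1, a-1]$, $J'' \subseteq [b+1, m]$, and $P \subseteq (m, 2m]$, so the chord of the cyclic $[2m]$ with endpoints in the gaps $(a-1, a)$ and $(b, b+1)$ places $I \setminus J \subseteq [a, b]$ on one arc and $(J \setminus I) \cup P$ on the other arc (the one running from $b+1$ through the padding back to $a-1$). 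By the chord characterization of weak separation, $\pad{I} \parallel \pad{J}$.

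For the converse, suppose $\pad{I} \parallel \pad{J}$. If $i > j$, fix any $p \in P$; this element will act as an ``anchor''. Suppose for contradiction that some $b \in J \setminus I$ satisfies $a_1 < b < a_2$ for $a_1, a_2 \in I \setminus J$. Then $(a_1, b, a_2, p)$ is cyclically ordered in $[2m]$ with $\{a_1, a_2\} \subseteq \pad{I} \setminus \pad{J}$ and $\{b, p\} \subseteq \pad{J} \setminus \pad{I}$, contradicting weak separation. Hence each $b \in J \setminus I$ lies either below or above the entire set $I \setminus J$, producing the required partition for LZ case (1). When $i = j$, both difference sets lie in $[m]$, and the chord separating them in $[2m]$ either has one endpoint in the padding arc (producing a linear cut of $[m]$) or has both endpoints in the $[m]$ arc (producing a subinterval cut of $[m]$); in either situation one obtains either LZ case (1) or case (2).

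The main obstacle is simply managing the asymmetry introduced by $P$, which sits only on the ``high'' side of the cyclic order. This asymmetry is exactly what translates the linearly ordered LZ notion into the cyclically symmetric notion, and it is why the anchor argument pins down the partition uniquely when $i > j$; the borderline case $i = j$ loses the anchor and requires a short geometric case split, but is otherwise mechanical.
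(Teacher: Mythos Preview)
Your argument is correct and follows essentially the same route as the paper: compute the set differences of the padded sets, observe that the extra padding block $P$ lives entirely in $(m,2m]$, and use it to convert between the cyclic chord condition and the linear LZ partition. The paper compresses both directions into a single ``iff'' sentence and dispatches the equal-size case in one line by noting that the paddings then coincide, whereas you unpack the two implications and the $i=j$ case more explicitly; the content is the same.

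One small wrinkle in your write-up: in the forward direction you invoke case~(1) of the LZ definition under the hypothesis $|I|\geq |J|$, but when $|I|=|J|$ it can happen that only case~(2) holds (e.g.\ $I=\{1,4\}$, $J=\{2,3\}$). This is harmless, since in the equal-size situation you may additionally relabel $I\leftrightarrow J$ to force case~(1), but your WLOG as stated only fixes the inequality of cardinalities, not which of the two LZ cases applies. A one-clause adjustment to the WLOG (or simply treating $i=j$ separately from the start, as the paper does) removes the issue.
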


\begin{proof}
If $|I|=|J|$ then $I\LZpar J$ if and only if $I\parallel J$ if and only if $\pad{I} \parallel \pad{J}$.
Otherwise, we may assume that $|I| > |J|$. 
%the other two cases are very similar.
Then $\pad{J} \setminus \pad{I} = (J \setminus I) \cup \{ m+|J|-1, m+|J|, \ldots, m+|I|-1 \}$. Note that $J \setminus I$ is contained in $[m]$, as is $\pad{I} \setminus \pad{J}$. So $\pad{I}$ is weakly separated from $\pad{J}$ if and only if there is no element of $J \setminus I$ which lies between two elements of $I \setminus J$ in the standard linear order on $[m]$.  This precisely means that we can partition $J \setminus I$ as $J' \sqcup J''$ so that $J' \LZprec I \setminus J \LZprec J''$.
\end{proof}

Next, we explain the relation between positroids and $w$-chamber sets.  For a permutation $w\in S_m$, let
$\I(\hat{w})=(I_1,\dots,I_{2m})$ be the Grassmann necklace corresponding to the decorated permutation
$$
\hat{w} \ := \  [2m,2m-1,\cdots,m+1,w^{-1}(m),\cdots,w^{-1}(1)].
$$ 
(The permutation $\hat{w}$ has no
fixed points, so we do not need to describe a coloring.)

\begin{lemma}
\label{lem:samelength}
For any $w \in S_m$, we have $\ell(w) + m+1 = \ell(\I(\hat{w}))+1$. Furthermore, for $J \subseteq [m]$, we have $J \in \Ch(w)$ if and only if $\pad{J} \in \PCh(\I(\hat{w}))$.
\end{lemma}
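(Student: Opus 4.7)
The plan is to handle the two assertions of Lemma~\ref{lem:samelength} separately. A preliminary observation: applying the bijection from decorated permutations to Grassmann necklaces to $\hat{w}$ yields $I_1=[m]$, so $\M_{\I(\hat{w})}$ sits inside $\binom{[2m]}{m}$ with $n=2m$, $k=m$. The length equality then follows from counting alignments of $\hat{w}$ and using $\ell(\hat{w})=m^2-A(\hat{w})$. I would partition unordered pairs $\{i,j\}\subset[2m]$ by which halves contain $i$ and $j$. Type~(a), both in $[1,m]$: for $i<j$ in $[m]$ one finds $i<j<\hat{w}(j)=2m+1-j<\hat{w}(i)=2m+1-i$, so $(j,\hat{w}(j),\hat{w}(i),i)$ is cyclic and $\{i,j\}$ is an alignment, contributing $\binom{m}{2}$. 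Type~(b), both in $[m+1,2m]$: after substituting $p=2m+1-i$, $q=2m+1-j$ (so $p,q\in[m]$, $p>q$) and using $\hat{w}(i)=w^{-1}(p)$, a check of cyclic orderings gives that $\{i,j\}$ is an alignment iff $w^{-1}(p)>w^{-1}(q)$---i.e., iff $(q,p)$ is a non-inversion of $w^{-1}$---contributing $\binom{m}{2}-\ell(w)$. Type~(c), one index in each half: a direct case check over the four possible relative orderings of the four points on the circle shows that no cyclic arrangement $(i,\hat{w}(i),\hat{w}(j),j)$ or $(j,\hat{w}(j),\hat{w}(i),i)$ occurs, so this type contributes nothing. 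Summing, $A(\hat{w})=m(m-1)-\ell(w)$ and $\ell(\hat{w})=m+\ell(w)$.

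For the chamber-set equivalence, I would apply Corollary~\ref{cor:WSAlign}, which (as its proof reveals upon unwinding) is most naturally read in the form: $J\in\PCh(\I(\hat{w}))$ iff $J\in\M_{\I(\hat{w})}$ and for every alignment $\{i,j\}$ with cyclic ordering $(i,\hat{w}(i),\hat{w}(j),j)$, the implication $\hat{w}(i)\in J\Rightarrow\hat{w}(j)\in J$ holds. Setting $J=\pad{I}$ and translating each type of alignment from Part~1: type~(a) alignments become the statement that $\pad{I}\cap[m+1,2m]$ is upward-closed in $[m+1,2m]$, which is automatic because $\pad{I}\cap[m+1,2m]=\{m+|I|+1,\dots,2m\}$; type~(b) alignments become, after substituting $\alpha=\hat{w}(i)$, $\beta=\hat{w}(j)$ and unwinding $W_s:=w^{-1}([m+1-s,m])$, the condition ``for $\alpha<\beta$ in $[m]$ with $w(\alpha)<w(\beta)$, $\alpha\in I\Rightarrow\beta\in I$,'' which is precisely $I\in\Ch(w)$.

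What remains is to verify the matroid condition $\pad{I}\in\M_{\I(\hat{w})}$. I would compute the Grassmann necklace explicitly: $I_i=[i,m]\cup[2m-i+2,2m]$ for $i\in[1,m+1]$, and $I_{m+1+s}=[m+1+s,2m]\cup W_s$ for $s\in[0,m-1]$. The comparisons $\pad{I}\geq_i I_i$ for $i\in[1,m+1]$ hold for every $I\subseteq[m]$ by a direct termwise verification in the $<_i$ order (both sides follow the same geometric pattern on the cycle). For $i=m+1+s$, sorting both $\pad{I}$ and $I_i$ in the $<_i$ order reduces the comparison, after the bookkeeping split into the cases $s\leq|I|$ and $s>|I|$, to a termwise domination of the sorted elements of $I$ by the sorted elements of $W_s$ (as sequences in $[m]$). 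I expect this joint domination over all $s$ to be equivalent---after careful combinatorial manipulation---to $I$ being a filter for the partial order $a\prec b\iff a<b$ and $w(a)<w(b)$, i.e., to $I\in\Ch(w)$. This last equivalence is the main obstacle, because one must simultaneously track three comparison structures on $[m]$ (the natural order, the cyclic shift $<_i$, and the $w^{-1}$-image order); I expect the cleanest argument to proceed either by induction on $\ell(w)$, adding one simple transposition at a time, or by directly matching the domination obstructions to the covering relations of the $\Ch(w)$-poset. Once the matroid membership is in hand, the biconditional $J\in\Ch(w)\iff\pad{J}\in\PCh(\I(\hat{w}))$ follows by combining with the alignment translation, completing the lemma.
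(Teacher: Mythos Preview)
Your Part 1 is essentially identical to the paper's proof: both count alignments of $\hat{w}$ by splitting unordered pairs $\{i,j\}\subset[2m]$ according to which half each index lies in, obtaining $\binom{m}{2}$, $\binom{m}{2}-\ell(w)$, and $0$ for the three types, and concluding $\ell(\hat{w})=m+\ell(w)$.

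Your Part 2 also follows the paper's strategy---translate the alignment criterion of Corollary~\ref{cor:WSAlign} through the three alignment types---but you go further than the paper in one respect and then get stuck. The paper's argument for the second assertion is three sentences: it simply matches up the type-(a), type-(b) and type-(c) alignment constraints with the $\Ch(w)$ condition and with the ``padding shape'' condition, and stops. In particular, the paper does \emph{not} separately verify the hypothesis $\pad{J}\in\M_{\I(\hat{w})}$ of Corollary~\ref{cor:WSAlign}. You correctly noticed that the Corollary is stated only for $J\in\M$, and so you set out to prove the matroid membership directly by computing the necklace and checking the termwise inequalities $I_i\leq_i\pad{J}$. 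That is where your proposal becomes incomplete: you reduce the second-half comparisons to a domination statement between the sorted elements of $I$ and of $W_s=w^{-1}([m+1-s,m])$, conjecture that this is equivalent to $I\in\Ch(w)$, and leave it at ``I expect.''

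This is a genuine gap in your write-up. Two remarks may help you close it. First, you do not need the full equivalence you are aiming for; only the implication $I\in\Ch(w)\Rightarrow\pad{I}\in\M_{\I(\hat{w})}$ is required (the reverse implication is already handled by your alignment translation). Second, for that one implication there is a direct argument avoiding the intricate termwise bookkeeping: if $I\in\Ch(w)$ then for each $s$ the set $I$ meets every ``staircase'' condition coming from $W_s$ because $I$ is an order filter for the partial order $a\prec b\iff a<b$ and $w(a)<w(b)$, and $W_s$ is a filter of the same poset of size $s$; a standard filter-comparison (or a short induction on $s$) then gives the needed domination of the top $s$ elements of $I$ by those of $W_s$. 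Once you fill this in, your argument is complete and is essentially a more careful version of the paper's.
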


\begin{proof}

Let us compute the number of alignments of $\hat{w}$. All $\binom{m}{2}$ pairs $\{i,j\}$ with $i,j\in[m]$ form an alignment. A pair $\{2m+1-i,2m+1-j\}$ with $i,j\in[m]$ forms an alignment if and only if $(i,j)$ is not an inversion of $w^{-1}$, that is, $i<j$ and $w^{-1}(i)<w^{-1}(j)$.
There are $\binom{m}{2}-\ell(w^{-1}) = \binom{m}{2} - \ell(w)$ alignments of this type. Finally, a pair $\{i,m+j\}$ with $i,j\in[m]$ never forms an alignment. In total, we get
$\ell(\I(\hat{w})) = m^2 - A(\hat{w}) = m^2 - 2 \binom{m}{2} + \ell(w) = m + \ell(w).$

Now let's check the second arugment. Any pair $a<b$ with $w(a) < w(b)$ translates into an alignment $\{a,b\}$ of $\hat{w}$. So $\pad{J} \in \PCh(\I(\hat{w}))$ implies $J \in \Ch(w)$. The alignments $\{a,b\}$ of $\hat{w}$ such that $a,b \in [m+1,2m]$ translates into the restriction that any $H \in \PCh(\I(\hat{w}))$ is obtained by padding some subset of $[m]$. And as we have checked above, a pair $\{i,m+j\}$ with $i,j\in[m]$ never forms an alignment. So $J \in \Ch(w)$ implies $\pad{J} \in \PCh(\I(\hat{w}))$.
\end{proof}

Combining the above two lemmas, we have obtained the following claim.

\begin{proposition} \label{samews} $\C$ is a maximal weakly separated
collection of a $w$-chamber set $\Ch(w)$ if and only if
$\pad{\C}$ is a maximal
weakly separated collection inside the positroid $\M_{\I(w)}$.  \end{proposition}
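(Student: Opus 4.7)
The plan is to combine the two preceding lemmas (padding preserves weak separation, and $\pad{\Ch(w)} = \PCh(\I(\hat{w}))$) with Theorem~\ref{thm:WSisFGPos}, which asserts that every maximal weakly separated collection inside $\M_{\I(\hat{w})}$ is of the form $\f(G)$ for a reduced plabic graph and hence automatically lies inside $\PCh(\I(\hat{w}))$. Together these facts say that $\pad{}$ is a weak-separation-preserving bijection between $\Ch(w)$ and $\PCh(\I(\hat{w}))$, so maximality statements on the two sides can be identified via a descent argument through $\PCh(\I(\hat{w}))$.

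For the forward direction, suppose $\C$ is maximal in $\Ch(w)$. Then $\pad{\C}$ is pairwise weakly separated by the padding lemma, and $\pad{\C} \subseteq \pad{\Ch(w)} = \PCh(\I(\hat{w})) \subseteq \M_{\I(\hat{w})}$ by Lemma~\ref{lem:samelength}. To show $\I(\hat{w}) \subseteq \pad{\C}$, write each necklace element as $I_r = \pad{J_r}$ (possible because $\I(\hat{w}) \subseteq \PCh(\I(\hat{w})) = \pad{\Ch(w)}$); since every element of $\pad{\C} \subseteq \PCh(\I(\hat{w}))$ lies in some $\f(G)$, which contains $\I(\hat{w})$ as boundary labels (Proposition~\ref{prop:PartConverse}) and is itself weakly separated (Proposition~\ref{prop:plagws}), each $I_r$ is weakly separated from every element of $\pad{\C}$; the padding lemma then gives $J_r$ weakly separated from every element of $\C$, so maximality of $\C$ forces $J_r \in \C$, i.e., $I_r \in \pad{\C}$. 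For maximality of $\pad{\C}$ inside $\M_{\I(\hat{w})}$, suppose $H \in \M_{\I(\hat{w})}$ is weakly separated from $\pad{\C}$; extend $\pad{\C} \cup \{H\}$ to a maximal weakly separated collection $\C'$ inside $\M_{\I(\hat{w})}$. By Theorem~\ref{thm:WSisFGPos}, $\C' = \f(G) \subseteq \PCh(\I(\hat{w})) = \pad{\Ch(w)}$, so $\C' = \pad{\C''}$ for some weakly separated $\C'' \supseteq \C$ in $\Ch(w)$; maximality of $\C$ yields $\C'' = \C$, hence $H \in \pad{\C}$.

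The reverse direction is a short dual argument: if $\pad{\C}$ is maximal inside $\M_{\I(\hat{w})}$, Theorem~\ref{thm:WSisFGPos} gives $\pad{\C} \subseteq \PCh(\I(\hat{w})) = \pad{\Ch(w)}$, whence $\C \subseteq \Ch(w)$ with pairwise weak separation transported via the padding lemma; any strict weakly separated extension $\C' \supsetneq \C$ inside $\Ch(w)$ lifts to a strict weakly separated extension $\pad{\C'} \supsetneq \pad{\C}$ inside $\M_{\I(\hat{w})}$ (the inclusion $\I(\hat{w}) \subseteq \pad{\C'}$ being inherited from $\pad{\C}$), contradicting maximality of $\pad{\C}$.

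The main obstacle is the step $\I(\hat{w}) \subseteq \pad{\C}$ in the forward direction: this is where we must recognize that the necklace elements are weakly separated from every member of $\PCh(\I(\hat{w}))$, because they appear as boundary labels in each weakly separated $\f(G)$, so that the maximality of $\C$ can be invoked to pull them into $\pad{\C}$. Everything else is formal bookkeeping with the two preceding lemmas and Theorem~\ref{thm:WSisFGPos}.
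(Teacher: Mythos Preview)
Your proof is correct and follows the same underlying route the paper indicates: use the padding lemma and Lemma~\ref{lem:samelength} to identify weakly separated collections in $\Ch(w)$ with those in $\PCh(\I(\hat{w}))$, then transfer maximality across. The paper's own justification is a single sentence (``Combining the above two lemmas''), so your version is considerably more detailed.

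The one substantive point worth noting is that you explicitly invoke Theorem~\ref{thm:WSisFGPos} (equivalently, the description of $\PCh$ in Corollary~\ref{cor:WSAlign}) to close the gap between ``maximal weakly separated in $\PCh(\I(\hat{w}))$'' and ``maximal weakly separated inside $\M_{\I(\hat{w})}$'': the latter notion requires $\I(\hat{w}) \subseteq \C$ by definition and allows extension by arbitrary elements of $\M_{\I(\hat{w})}$, not just of $\PCh(\I(\hat{w}))$. The paper's terse statement glosses over this, but it is genuinely needed, and your argument handles both the inclusion $\I(\hat{w}) \subseteq \pad{\C}$ and the impossibility of extending by an element of $\M_{\I(\hat{w})} \setminus \PCh(\I(\hat{w}))$ cleanly via Theorem~\ref{thm:WSisFGPos}. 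Your identification of this as the one nontrivial step is accurate. A small simplification: for the inclusion $\I(\hat{w}) \subseteq \pad{\C}$ you can appeal directly to the proof of Corollary~\ref{cor:WSAlign}, which shows that every element of $\PCh(\I)$ is weakly separated from each $I_r$, rather than passing through an individual $\f(G)$ for each element of $\pad{\C}$.
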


%We leave it for the reader to check that $\ell(w) + m+1 = \ell(\I(\hat{w}))+1$.

%\begin{lemma} \label{samelength}
%For any $w \in S_m$, we have $\ell(w) + m+1 = \ell(\I(\hat{w}))+1$.
%\end{lemma}

%\begin{proof}
%Let us compute the number of alignments of $\hat{w}$.
%All ${m\choose 2}$ pairs $\{i,j\}$ with $i,j\in[m]$ form an alignment.
%A pair $\{2m+1-i,2m+1-j\}$ with $i,j\in[m]$ forms an alignment
%if and only if $(i,j)$ is not an inversion of $w^{-1}$,
%that is, $i<j$ and $w^{-1}(i)<w^{-1}(j)$.
%There are ${m\choose 2}-\ell(w^{-1}) = {m\choose 2} - \ell(w)$ alignments of this type.  
%Finally, a pair $\{i,m+j\}$ with $i,j\in[m]$ never forms an alignment.
%In total, we get
%$\ell(\I(\hat{w})) = m^2 - A(\hat{w}) = m^2 - 2 \binom{m}{2} + \ell(w) = m + \ell(w).$
%\end{proof}

%Combining Proposition~\ref{samews} and Lemma~\ref{lem:samelength}, we see that Theorem~\ref{thm:LZConjecture} follows 
%immediately from Theorem~\ref{thm:GSConjecture}.

So Theorem~\ref{thm:LZConjecture} follows 
immediately from Theorem~\ref{thm:GSConjecture}.

%\bibliographystyle{plain}
%\bibliography{purity}

\end{document}